\theoremstyle{plain}
\newtheorem{definition}{Definition}[section]
\newtheorem*{theorem*}{Theorem}
\newtheorem{remark}[definition]{Remark}
\newtheorem*{remark*}{Remark}
\newtheorem*{sideremark*}{Side Remark}\newtheorem*{mt*}{Main Theorem}
\newtheorem*{claim*}{Claim}
\newtheorem*{q*}{Question}
\newtheorem{lemma}[definition]{Lemma}
\newtheorem{corollary}[definition]{Corollary}
\newtheorem*{corollary*}{Corollary}
\newtheorem*{proposition*}{Proposition}
\newtheorem{proposition}[definition]{Proposition}
\newcommand{\R}{\mathbb{R}}
\newcommand{\na}{\nabla}
\newcommand{\dd}{{\rm d}}
\newcommand{\p}{\partial}
\newcommand{\e}{\epsilon}
\newcommand{\emb}{\hookrightarrow}
\newcommand{\G}{\Gamma}
\newcommand{\M}{\mathcal{M}}
\newcommand{\1}{\mathbbm{1}}
\newcommand{\db}{\Delta^{\mathsf{B}}}
\newcommand{\ric}{\mathscr{R}}
\newcommand{\A}{{\mathfrak{A}}}
\newcommand{\fs}{{\mathfrak{S}}}
\newcommand{\TA}{{\widehat{\A}}}
\newcommand{\B}{{\mathfrak{B}}}
\newcommand{\TB}{{\widehat{\B}}}
\newcommand{\CC}{{\mathfrak{C}}}
\newcommand{\TC}{{\widehat{\CC}}}
\newcommand{\dvg}{{\dd V_{g_0}}}
\newcommand{\one}{{]1,\infty[}}
\newcommand{\gc}{{\overline{\mathfrak{g}_0}}}
\newcommand{\gn}{{\gamma_{\bf n}}}
\newcommand{\gt}{{\gamma_{\bf t}}}
\newcommand{\nnn}{{\mathfrak{n}}}
\newcommand{\ttt}{{\mathfrak{t}}}
\newcommand{\eu}{{\mathfrak{e}}}
\newcommand{\clM}{{\overline{\M}}}
\newcommand{\ok}{{\mathcal{O}(\kappa)}}
\newcommand{\half}{{\R^n_+}}
\def\XXint#1#2#3{{\setbox0=\hbox{$#1{#2#3}{\int}$ }
\vcenter{\hbox{$#2#3$ }}\kern-.6\wd0}}
\newcommand{\mres}{\mathbin{\vrule height 1.6ex depth 0pt width
0.13ex\vrule height 0.13ex depth 0pt width 1.3ex}}
\numberwithin{equation}{section}
\numberwithin{figure}{section}
\title{A new proof of the Gaffney's inequality for differential forms on manifolds-with-boundary: the variational approach \`{a} la Kozono--Yanagisawa}
\author{Siran Li}\address{Siran Li: New York University--Shanghai, Office 1146, 1555 Century Avenue, Pudong District, Shanghai, China}
\email{\texttt{sl4025@nyu.edu}}
\keywords{Gaffney's inequality; differential form; Sobolev spaces on manifolds; Bochner's technique; variational approach}
\subjclass[2020]{58A10, 58J32}
\date{\today}
\begin{document}

\maketitle

\begin{abstract}
Let $(\M,g_0)$ be a compact  Riemannian manifold-with-boundary. We present a new proof of the classical Gaffney's inequality for differential forms in boundary value spaces over $\M$, via the variational approach \`{a} la Kozono--Yanagisawa [$L^r$-variational inequality for vector fields and the Helmholtz--Weyl decomposition in bounded domains, \textit{Indiana Univ. Math. J.} \textbf{58} (2009), 1853--1920] combined with global computations based on the Bochner's  technique.
\end{abstract}

\section{Introduction}

The goal of this note is to give an alternative proof --- which is global and variational in nature --- of the Gaffney's inequality (\cite{g}) for differential forms of arbitrary order on a manifold-with-boundary $(\M^n,g_0)$ in boundary value Sobolev spaces $W^{1,r}$ with $r \in ]1,\infty[$.

\subsection{Gaffney's inequality}

As a prototype of the Gaffney's inequality, consider the well-known div-curl estimate of the Calder\'{o}n--Zygmund type in dimension $3$ --- on a 3D Euclidean domain (with mild regularity assumptions), the $W^{1,r}$-norm of a vectorfield $v$ can be estimated by the $L^r$-norms of ${\rm div} \,v$, ${\rm curl}\, v$, and $v$ itself in the interior of the domain. Such an estimate plays a fundamental role in mathematical hydrodynamics, magnetics, and many other fields.

Nonetheless, the extension of the classical div-curl estimate to higher-dimensional manifolds for differential forms of arbitrary order subject to suitable boundary conditions is by no means straightforward. In the seminal work \cite{g}, Gaffney first established the estimate 
\begin{equation}\label{gaffney}
\|u\|_{W^{1,2}(\M)} \leq C\left\{\|du\|_{L^2(\M)} + \|\delta u\|_{L^2(\M)} + \|u\|_{L^2(\M)}\right\},
\end{equation}
where $\M$ has empty boundary and $u$ is a differential $k$-form on $\M$ for any $k$. See also Gallot--Meyer \cite{gm} and the exposition in Schwarz \cite{s} (especially Corollary~2.1.6 and Theorem~2.1.7).

The proof of Eq.~\eqref{gaffney} for general $r \in \one$ (\emph{i.e.}, with $W^{1,2}$ and $L^2$ replaced by $W^{1,r}$ and $L^r$, respectively) in boundary value spaces is more technical; indeed, to the best of our knowledge, a geometric proof on manifolds-with-boundary remains elusive. The only proof carefully written down that we know of is in Iwaniec--Scott--Stroffolini \cite{iss}, \S 4. Utilising the ``freezing coefficient'' argument and thus reducing to the Euclidean case, the proof in \cite{iss} is essentially local and Euclidean in nature. On the other hand, it should be remarked that global proofs of Eq.~\eqref{gaffney} on manifolds without boundary can be found in the literature; \emph{cf.} Scott \cite{scott} for a potential-theoretic proof via the $L^r\to L^r$ boundedness of Riesz transforms. Extensions to various classes of open manifolds are also available (Amar \cite{a1}, Auscher--Coulhon--Duong--Hofmann \cite{acdh}, etc.).

We point out that in the Hilbert case ($r=2$), global, geometric, and simple proofs of the Gaffney's inequality on Riemannian manifolds and Euclidean domains can be found in the literature. Wider classes of boundary conditions and the determination of best constants have also been investigated. See Cast\'{o} \cite{c3}, Cast\'{o}--Dacorogna \cite{cd}, Cast\'{o}--Dacorogna--Kneuss \cite{cdk}, Cast\'{o}--Dacorogna--Sil \cite{cds}, Cast\'{o}--Dacorogna--Rajendran \cite{ckr}, and Mitrea \cite{mitrea}.

In this note we  present an alternative, \emph{global} proof of the $L^r$-Gaffney's inequality for differential forms of arbitrary order on manifolds-with-boundary. Our arguments utilise the Bochner technique and integration by parts (\emph{i.e.}, the Stokes' and/or Gauss--Green theorems), together with a variational characterisation of $W^{1,r}$-differential forms in boundary value spaces \emph{\`{a} la} Kozono--Yanagisawa \cite{ky}. In addition, our proof uses a polarisation trick that exploits the symmetry of boundary integrals. We learned this trick from an earlier work \cite{c} by Chern. 

\subsection{Boundary value Sobolev spaces for differential forms on manifolds-with-boundary}
Let us first introduce the relevant function spaces. Our notations largely follow Schwarz \cite{s}.

Throughout this work, $(\M,g_0)$ is a Riemannian manifold-with-boundary of dimension $n$. Denote by $\Omega^k(\M)$ the space of differential $k$-forms on $\M$. One can define in standard ways the Sobolev spaces of differential forms; see Hebey \cite{h}. We write $$W^{\ell,p}\Omega^k(\M) \equiv L_\ell^p\Omega^k(\M)$$ for the space of $k$-forms on $\M$ with $W^{\ell,p}$-regularity in the interior. 

We write $d:\Omega^k(\M) \to \Omega^{k+1}(\M)$ for the exterior differential operator, and $\delta: \Omega^{k+1}(\M)\to\Omega^k(\M)$ for the codifferential (\emph{i.e.}, the formal $L^2$-adjoint of $d$). Throughout, the $L^2$- and any other Sobolev norms on $\M$ are all taken with respect to the underlying Riemannian metric $g_0$. The Hodge star $\star: \Omega^k(\M) \to \Omega^{n-k}(\M)$ is also defined via the Riemannian volume measure.

On the space $\mathring{W}^{1,r}\Omega^k(\M):=\left\{u \in L^r\Omega^k(\M):\,du \in L^r\Omega^{k+1}(\M),\, \delta u \in L^r\Omega^{k-1}(\M)\right\}$, we can define the Dirichlet and Neumann trace operators $\ttt$ and $\nnn$:
\begin{align*}
\ttt,\,\nnn: \mathring{W}^{1,r}\Omega^k(\M)\longrightarrow \left[W^{1-\frac{1}{r'},r'}\Omega^k(\p\M)\right]^*,
\end{align*}
where $1/r+1/r'=1$ (the same below). On $C^1\Omega^k(\M)$ the trace operators are given as follows:
\begin{align}\label{t and n}
\ttt \omega := \iota^\# \omega,\qquad \nnn \omega := \omega - \ttt\omega,
\end{align}
where $\iota: \p\M\emb\M$ is the natural inclusion, and $\iota^\#$ is the pullback operator under $\iota$. In other words, for any vectorfields $X_1,\ldots,X_k$ along the boundary $\p\M$ we have
\begin{align*}
\ttt \omega(X_1,\ldots,X_k) \equiv \omega \left(X_1^\top,\ldots,X_k^\top\right),
\end{align*}
where $X^\top$ is the component of $X$ tangential to $\p\M$. 

The notation $\mathring{W}^{1,r}\Omega^k(\M)$ is only temporary; we shall not use it again in the sequel. The point is that, in general (without suitable boundary conditions; and even if so, before proving the Gaffney's inequality) we do not know whether $\mathring{W}^{1,r}\Omega^k(\M)$ coincides with $W^{1,r}\Omega^k(\M)$.  
 
We can now define the boundary value Sobolev spaces using $\ttt$ and $\nnn$:
\begin{eqnarray}
&&W^{1,r}_N\Omega^k(\M) := \left\{ u \in L^r\Omega^k(\M):\,du \in L^r\Omega^{k+1}(\M),\, \delta u \in L^r\Omega^{k-1}(\M),\,\nnn u = 0 \right\},\\
&&W^{1,r}_D \Omega^k(\M) := \left\{ u \in L^r\Omega^k(\M):\,du \in L^r\Omega^{k+1}(\M),\, \delta u \in L^r\Omega^{k-1}(\M),\,\ttt u = 0 \right\}.
\end{eqnarray}
Here ``$D$'' for Dirichlet and ``$N$'' for Neumann.

In addition, let $\nu \in \G(\iota^\# T\M)$ denote the outward unit normal vectorfield on $\p\M$, and let $\nu^\flat$ be the $1$-form canonically dual to $\nu$. By a slight abuse of notations, we still write $\nu$ and $\nu^\flat$ for any of their smooth extensions into the interior of $\M$. Then we set
\begin{eqnarray}
&&\widehat{W^{1,r}_N}\Omega^k(\M) := \left\{ u \in W^{1,r}\Omega^k(\M):\,\nu \mres u |\p\M= 0 \right\},\\
&&\widehat{W^{1,r}_D} \Omega^k(\M) := \left\{ u \in W^{1,r}\Omega^k(\M):\,u \wedge \nu^\flat|\p\M = 0 \right\}.
\end{eqnarray}
Moreover, we put
\begin{eqnarray}
&&C^\infty_N\Omega^k(\M) := \left\{ u \in C^\infty\Omega^k(\clM):\,\nu \mres u|\p\M = 0 \right\},\\
&&C^\infty_D\Omega^k(\M) := \left\{ u \in C^\infty\Omega^k(\clM):\,u\wedge \nu^\flat |\p\M = 0 \right\}.
\end{eqnarray}
Here and hereafter, $\clM :=\M \cup \p\M$ and $\mres$ is the interior multiplication operator. For $\M = $ 3D Euclidean domain and $u =$ vectorfield, $\nu\mres u=0$ and $u \wedge \nu^\flat=0$ are respectively equivalent to $\nu \cdot u=0$ and $u \times \nu =0$ on $\p\M$.

Throughout this note, the $W^{\ell,p}$-norm of a differential form $u$ will be denoted by $\|u\|_{W^{\ell,p}(\M)}$. We suppress the index $k$, the order of the differential form, for notational convenience. Also, for two $L^2$-differential $k$-forms $\alpha$ and $\beta$ over $\M$, we write
\begin{align*}
(\alpha,\beta)_{g_0} := \int_\M \alpha \wedge \star\beta.
\end{align*}
This is the $L^2$-inner product with respect to the metric $g$. The right-hand side is an integral of $n$-forms over an $n$-dimensional manifold, which is defined in the usual manner. We also write
\begin{align*}
\langle \alpha,\beta\rangle_{\bigwedge^k} := \alpha \wedge \star\beta
\end{align*}
to emphasise the paring of $k$-forms.

For further developments we also need the following notations. First, $\na\equiv \na_{g_0}$ denotes the covariant derivative induced by the Levi-Civita connection of the given metric $g_0$. It extends to differential forms of any order via the Leibniz rule. Second, $\na^\dagger$ denotes the formal $L^2$-adjoint of $\na$, again with respect to the Riemannian volume measure of $g_0$. Third, $$\dd\Sigma=\nu \mres \dvg$$ is the Riemannian volume (surface) form on $\p\M$.

For background materials on global/geometric analysis, we refer to Petersen \cite{p}, Hebey \cite{h}, and the first chapter of Schwarz \cite{s}.

\subsection{Main Theorem}
Our main result is the following:
\begin{theorem*}[A]
Let $(\M,g_0)$ be an $n$-dimensional compact Riemannian manifold-with-boundary, let $k \in \{0,1,2,\ldots,n\}$, and let $r \in ]1,\infty[$ be arbitrary. Consider $u \in \widehat{W^{1,r}_D}\Omega^k(\M)$  or $\widehat{W^{1,r}_N}\Omega^k(\M)$, a differential $k$-form subject to the Dirichlet or Neumann boundary condition. Then
\begin{equation*}
\|u\|_{W^{1,r}(\M)} \leq C\left\{\|du\|_{L^r(\M)} + \|\delta u\|_{L^r(\M)} + \|u\|_{L^r(\M)}\right\},
\end{equation*}
where $C$ depends only on $r$, $k$, $n$, and the geometry of $\M$.
\end{theorem*}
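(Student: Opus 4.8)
The plan is to follow the Kozono--Yanagisawa variational strategy: instead of proving the $W^{1,r}$-estimate directly, I will characterise membership in $W^{1,r}$ by testing against a dual class of forms and controlling the covariant gradient $\na u$ via a suitable variational quantity. Concretely, for $u$ in (say) $\widehat{W^{1,r}_N}\Omega^k(\M)$, I would introduce the functional $\Phi(u) := \sup\{ (\na u, \na\varphi)_{g_0} : \varphi \in C^\infty_N\Omega^k(\M),\ \|\na\varphi\|_{L^{r'}(\M)} \le 1\}$ (with an analogous dual test-space in the Dirichlet case), so that by duality $\|\na u\|_{L^r(\M)} \simeq \Phi(u)$ up to the lower-order term $\|u\|_{L^r}$. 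The whole point is then to show that $\Phi(u)$ is bounded by the right-hand side $\|du\|_{L^r} + \|\delta u\|_{L^r} + \|u\|_{L^r}$.

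The link between $(\na u,\na\varphi)_{g_0}$ and $(du,d\varphi)_{g_0} + (\delta u,\delta\varphi)_{g_0}$ is exactly the Bochner--Weitzenb\"ock identity $\na^\dagger\na = dd^* + d^*d - \ric_k$ (the Weitzenb\"ock curvature operator acting on $k$-forms), which one integrates by parts over $\M$. This is where the boundary terms enter: integrating $(\na^\dagger\na u,\varphi)_{g_0}$ and $(dd^*u + d^*d u,\varphi)_{g_0}$ by parts produces surface integrals over $\p\M$, and one needs these to match up. Here I would use the standard fact that for $u$ with $\nnn u = 0$ (resp. $\ttt u = 0$) and $\varphi$ in the corresponding smooth boundary-value class, the boundary integrals arising from the Hodge side combine into a term involving the second fundamental form of $\p\M$ applied to the tangential (resp. normal) parts of $u$ and $\varphi$ --- this is the content of the classical Gaffney/Morrey boundary identity. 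The polarisation trick attributed to Chern is what makes these boundary terms manageable: by symmetrising in $u$ and $\varphi$ one isolates a bilinear form in the traces that is controlled purely by $\|u\|_{W^{1-1/r',\,r}(\p\M)}$ type quantities, which in turn interpolate between $\|u\|_{L^r}$ and $\|\na u\|_{L^r}$.

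Assembling the estimate, I would therefore obtain a bound of the shape
\begin{equation*}
\Phi(u) \le C\Big\{ \|du\|_{L^r(\M)} + \|\delta u\|_{L^r(\M)} + \|u\|_{L^r(\M)}\Big\} + \varepsilon\, \|\na u\|_{L^r(\M)},
\end{equation*}
where the $\varepsilon\|\na u\|_{L^r}$ term comes from the curvature term $\ric_k u$ and from the interpolated boundary contribution; absorbing it into the left-hand side (legitimate since $\|\na u\|_{L^r}\simeq \Phi(u)$ modulo $\|u\|_{L^r}$, after choosing $\varepsilon$ small) yields the theorem. To rigorously run the duality step I will first establish it for smooth forms $u \in C^\infty_N\Omega^k(\M)$ (resp. $C^\infty_D$) and then pass to the limit using the density of these smooth classes in $\widehat{W^{1,r}_N}\Omega^k(\M)$ (resp. $\widehat{W^{1,r}_D}$), which is a standard approximation fact for these boundary-value spaces.

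The main obstacle I anticipate is the careful bookkeeping of the boundary integrals in the integration-by-parts step: one must verify that, for $u$ and $\varphi$ in the chosen boundary classes, the surface terms from $(\na^\dagger\na u,\varphi)$ and from $(dd^*u + d^*d u,\varphi)$ differ by precisely a lower-order expression in the traces --- with no uncontrolled normal-derivative term surviving --- and that the polarisation genuinely removes the apparent loss of a full derivative on the boundary. A secondary technical point is choosing the dual test-class so that the boundary conditions on $\varphi$ are compatible with those on $u$ while still being rich enough to recover the full $L^r$-norm of $\na u$ by duality; this requires knowing that $C^\infty_N$ (resp. $C^\infty_D$) is dense in the appropriate $L^{r'}$-closure, which again I would record as a preliminary lemma.
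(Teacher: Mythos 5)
Your overall architecture coincides with the paper's: reduce the $W^{1,r}$-estimate to a variational quantity tested against a boundary-compatible class of smooth forms, convert between $(\na u,\na\varphi)$ and $(du,d\varphi)+(\delta u,\delta\varphi)$ via the Bochner--Weitzenb\"ock identity with integration by parts, use the Chern-style polarisation in $u$ and $\varphi$ to cancel the normal-derivative boundary terms, and absorb the remaining $\varepsilon\|\na u\|_{L^r}$ contribution. The treatment of the boundary integrals (only second-fundamental-form/Christoffel terms survive after symmetrisation) is exactly what the paper carries out in Eqs.~\eqref{polarise 1}--\eqref{polarise 5}.

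The genuine gap is your opening step: the claim that ``by duality $\|\na u\|_{L^r(\M)}\simeq\Phi(u)$ up to $\|u\|_{L^r}$,'' where $\Phi(u)=\sup\{(\na u,\na\varphi)_{g_0}:\varphi\in C^\infty_N\Omega^k(\M),\ \|\na\varphi\|_{L^{r'}}\le 1\}$. This is \emph{not} a routine $L^r$--$L^{r'}$ duality, and it is not settled by density of $C^\infty_N$ in an $L^{r'}$-closure, as you suggest in your closing paragraph. The test objects $\na\varphi$ range over a thin, highly non-dense subset of the dual space of $L^{r'}$-sections of $T^*\M\otimes\Lambda^kT^*\M$ (they are gradients of forms satisfying a boundary condition), so the supremum over this set does not recover $\|\na u\|_{L^r}$ by abstract functional analysis. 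Establishing precisely this lower bound is the content of Theorem~B and Corollary~\ref{cor: main}, whose proof occupies all of \S\ref{sec: var}: one tests with $\varphi=\na_j\psi$, uses the density of $\{(\1-\Delta)\psi\}$ in $L^{r'}$ together with the Calder\'on--Zygmund estimate on $\R^n$, then reflects across $\p\half$, freezes coefficients on small charts, and bootstraps the integrability exponent. If you intend to invoke Theorem~B as a black box your proof is essentially the paper's (note, though, that the stated variational quantity there has numerator $(du,d\phi)_{g_0}+(\delta u,\delta\phi)_{g_0}+(u,\phi)_{g_0}$, not $(\na u,\na\phi)_{g_0}$ --- the version with $(\na u,\na\phi)_{g_0}$ is only derived \emph{after} Theorem~A in the paper, so you must run the Bochner/polarisation conversion in the direction that feeds the $(d,\delta)$-numerator, as the paper does); if instead you intend the step to be self-justifying ``duality,'' the argument does not close.
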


\subsection{Variational approach}
To prove Theorem~A, we shall use a variational characterisation of the $W^{1,r}$-norm of differential forms in  boundary value spaces.

\begin{theorem*}[B]
Let $(\M,g_0)$ be an $n$-dimensional compact Riemannian manifold-with-boundary, let $C^\infty_\diamondsuit\Omega^k(\M)$ denote either $C^\infty_{D}\Omega^k(\M)$ or $C^\infty_{N}\Omega^k(\M)$, let $r$ and $q$ be any numbers in $]1,\infty[$, and let $k\in\{0,1,2,\ldots,n\}$. Assume that $u \in \widehat{W^{1,q}_\diamondsuit} \Omega^k(\M,g_0)$ and that the quantity below is finite:
\begin{equation*}
\sup\left\{\frac{\left| ( d u, d \phi)_{g_0} +( \delta u, \delta \phi)_{g_0} +( u,\phi)_{g_0} \right|}{\|d\phi\|_{L^{r'}(\M,g_0)} + \|\delta\phi\|_{L^{r'}(\M,g_0)}+\|\phi\|_{L^{r'}(\M,g_0)}}:\,\phi \in C^\infty_\diamondsuit\Omega^k(\M) \right\}.
\end{equation*}
Then $u \in\widehat{W^{1,r}_\diamondsuit} \Omega^k(\M,g_0)$.

In addition, the following variational inequality is satisfied:
\begin{equation*}
\|u\|_{W^{1,r}} \leq C\sup \left\{\frac{\left| ( d u, d \phi)_{g_0} +( \delta u, \delta \phi)_{g_0} + (u,\phi)_{g_0} \right|}{\|d\phi\|_{L^{r'}(\M,g_0)} + \|\delta\phi\|_{L^{r'}(\M,g_0)}+\|\phi\|_{L^{r'}(\M,g_0)}}:\,\phi \in C^\infty_\diamondsuit\Omega^k(\M) \right\},
\end{equation*}
where $C$ depends only on $r$, $n$, $k$, and the $C^1$-geometry of $(\M,g_0)$.
\end{theorem*}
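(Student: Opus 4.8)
Write $\mathcal B(v,w):=(dv,dw)_{g_0}+(\delta v,\delta w)_{g_0}+(v,w)_{g_0}$, let $S$ denote the supremum on the right-hand side of the asserted inequality, and let $X^{p}_\diamondsuit$ be the completion of $C^\infty_\diamondsuit\Omega^k(\M)$ for the norm $\|d\cdot\|_{L^{p}}+\|\delta\cdot\|_{L^{p}}+\|\cdot\|_{L^{p}}$; the hypothesis says precisely that $\phi\mapsto\mathcal B(u,\phi)$ extends to a functional on $X^{r'}_\diamondsuit$ of norm $\le S$. Since $|d\phi|\le C|\na\phi|$ and $|\delta\phi|\le C|\na\phi|$ pointwise, this already gives $|\mathcal B(u,\phi)|\le CS\|\phi\|_{W^{1,r'}(\M)}$ for every $\phi\in C^\infty_\diamondsuit\Omega^k(\M)$, and the whole problem is to convert this control of the functional $\mathcal B(u,\cdot)$ into control of $\na u$ in $L^r$. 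By the density of $C^\infty_\diamondsuit\Omega^k(\M)$ in $\widehat{W^{1,q}_\diamondsuit}\Omega^k(\M)$, a truncation, and a finite bootstrap on the integrability exponent handling the regime $r>q$ (for $r\le q$ one has $\widehat{W^{1,q}_\diamondsuit}\subset W^{1,r}$ a priori, so only the estimate is at stake; for $r>q$ one first gains $W^{1,r}$-membership qualitatively through trace--Sobolev embeddings, then argues as in the case $r\le q$), it suffices to prove the a priori bound $\|u\|_{W^{1,r}(\M)}\le CS$ for $u\in C^\infty_\diamondsuit\Omega^k(\M)$; the membership $u\in\widehat{W^{1,r}_\diamondsuit}\Omega^k(\M)$ follows upon passing to the limit, the boundary condition being stable under $W^{1,r}$-convergence.

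\textbf{Step 1: a symmetrised global Bochner identity.} The first main input will be the Weitzenb\"ock decomposition $d\delta+\delta d=\na^\dagger\na+\ric$ together with the Gauss--Green/Stokes theorems, which --- after three integrations by parts --- yield, for all $u,\phi\in C^\infty_\diamondsuit\Omega^k(\M)$, an identity of the shape
\[
(\na u,\na\phi)_{g_0}=(du,d\phi)_{g_0}+(\delta u,\delta\phi)_{g_0}-(\ric u,\phi)_{g_0}+\mathcal Q_{\p\M}(u,\phi),
\]
where $\mathcal Q_{\p\M}(u,\phi)$ is an integral over $\p\M$. All the other terms are symmetric in $(u,\phi)$ (for the curvature term because $\ric$ is self-adjoint), hence so is $\mathcal Q_{\p\M}$. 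Now the diagonal $\mathcal Q_{\p\M}(u,u)$ is exactly the boundary term occurring in the classical Hilbert-space ($r=2$) Gaffney computation, and under the $\diamondsuit$-boundary condition it is purely zeroth order in $u$: $\mathcal Q_{\p\M}(u,u)=-\int_{\p\M}\langle\mathcal S_{\p\M}u,u\rangle\,\dd\Sigma$, with $\mathcal S_{\p\M}$ a bundle endomorphism built algebraically from the second fundamental form $\two$ of $\p\M$. The \emph{polarisation trick} borrowed from \cite{c} then recovers the off-diagonal form at no extra cost: $\mathcal Q_{\p\M}(u,\phi)=-\int_{\p\M}\langle\mathcal S_{\p\M}u,\phi\rangle\,\dd\Sigma$ for all $u,\phi\in C^\infty_\diamondsuit\Omega^k(\M)$, so that $|\mathcal Q_{\p\M}(u,\phi)|\le C\|u\|_{L^r(\p\M)}\|\phi\|_{L^{r'}(\p\M)}$. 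Combining this with the trace inequality $\|\phi\|_{L^{r'}(\p\M)}\le C\|\phi\|_{W^{1,r'}(\M)}$, an interpolated (Ehrling) trace inequality $\|u\|_{L^r(\p\M)}\le\epsilon\|\na u\|_{L^r(\M)}+C_\epsilon\|u\|_{L^r(\M)}$, and the algebraic identity $(du,d\phi)_{g_0}+(\delta u,\delta\phi)_{g_0}=\mathcal B(u,\phi)-(u,\phi)_{g_0}$, one reaches
\[
|(\na u,\na\phi)_{g_0}|\le C\bigl(S+\|u\|_{L^r(\M)}+\epsilon\,\|\na u\|_{L^r(\M)}\bigr)\|\phi\|_{W^{1,r'}(\M)}\qquad\text{for all }\phi\in C^\infty_\diamondsuit\Omega^k(\M).
\]

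\textbf{Step 2: Kozono--Yanagisawa inversion and absorption.} To pass from control of $(\na u,\na\phi)$ over $\phi\in C^\infty_\diamondsuit\Omega^k(\M)$ to control of the full $\|\na u\|_{L^r(\M)}=\sup\{(\na u,\Psi)_{g_0}:\Psi\text{ a smooth tensor field of the type of }\na u,\ \|\Psi\|_{L^{r'}(\M)}\le1\}$, I would, given such a $\Psi$, solve the auxiliary boundary value problem
\[
\na^\dagger\na\phi+\phi=\na^\dagger\Psi\qquad\text{in }\M,
\]
(right-hand side in the appropriate negative Sobolev space) under a boundary condition containing the $\diamondsuit$-condition on $\phi$ and completing it to a coercive Lopatinski--Shapiro one. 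This is legitimate --- and, crucially, free of any circularity with the Gaffney estimate --- because $\na^\dagger\na+1$ has \emph{scalar} principal symbol, so it falls under the classical Agmon--Douglis--Nirenberg $L^p$-theory for elliptic systems; the solution $\phi_\Psi$ then lies in (the closure of) $C^\infty_\diamondsuit\Omega^k(\M)$, satisfies $\|\phi_\Psi\|_{W^{1,r'}(\M)}\le C\|\Psi\|_{L^{r'}(\M)}$, and $\xi:=\na\phi_\Psi-\Psi$ obeys $\na^\dagger\xi=-\phi_\Psi$. Integrating by parts once,
\[
(\na u,\Psi)_{g_0}=(\na u,\na\phi_\Psi)_{g_0}+(u,\phi_\Psi)_{g_0}-\int_{\p\M}\langle u,\nu\mres\xi\rangle\,\dd\Sigma,
\]
the boundary term being meaningful and $\le C\|u\|_{L^r(\p\M)}\|\Psi\|_{L^{r'}(\M)}$ because $\xi\in L^{r'}(\M)$ with $\na^\dagger\xi\in L^{r'}(\M)$, so $\nu\mres\xi$ possesses a normal trace in the dual of the trace space of $W^{1,r}\Omega^k(\M)$, of norm controlled by $\|\Psi\|_{L^{r'}(\M)}$. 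Inserting the inequality of Step 1 into the first term and the trace estimates into the rest yields $|(\na u,\Psi)_{g_0}|\le C(S+\|u\|_{L^r(\M)}+\epsilon\|\na u\|_{L^r(\M)})\|\Psi\|_{L^{r'}(\M)}$; taking the supremum over $\Psi$ and fixing $\epsilon$ small gives $\|\na u\|_{L^r(\M)}\le C\bigl(S+\|u\|_{L^r(\M)}\bigr)$. Finally, since $\mathcal B$ is the form attached to the strictly positive operator $d\delta+\delta d+1$, the lower-order term $\|u\|_{L^r(\M)}$ is eliminated by a routine compactness/contradiction argument (Rellich's theorem, together with the fact that $\mathcal B(v,v)\ge\|v\|_{L^2(\M)}^2$ forces the only $\diamondsuit$-weak null solution of $\mathcal B$ to vanish), giving $\|u\|_{W^{1,r}(\M)}\le CS$, as asserted.

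\textbf{Expected main obstacle.} The crux is Step 2: representing an \emph{arbitrary} $L^{r'}$ test field as $\na\phi_\Psi$ up to a remainder $\xi$ that is harmless when paired with $\na u$. This needs both the solvability of the auxiliary problem in $W^{1,r'}$ under a boundary condition that is simultaneously coercive for $\na^\dagger\na+1$ and compatible with the $\diamondsuit$-condition, and the normal-trace control of $\xi$ on $\p\M$ that keeps the resulting boundary integral bounded --- this is exactly where the genuinely Kozono--Yanagisawa-type input enters. The secondary technical points --- making the $r=2$ Gaffney boundary computation precise enough to pin down the diagonal of $\mathcal Q_{\p\M}$ so that the polarisation in Step 1 applies, and carrying out the exponent bootstrap in the regime $r>q$ --- are routine but will require some care.
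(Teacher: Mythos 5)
Your proposal takes a genuinely different route from the paper, which proves Theorem~B in the Kozono--Yanagisawa way: first on $\R^n$ by testing against $\phi=\na_j\psi$, rewriting the numerator as $(\{\1-\Delta\}\psi,\na_j u)$ and combining the density of $\{(\1-\Delta)\psi\}$ in $L^{r'}$ with the Calder\'on--Zygmund estimate; then on $\overline{\R^n_+}$ by even/odd reflection; then on a small chart by freezing the metric; finally by partition of unity plus a bootstrap through the chain of Sobolev conjugates $q\mapsto s(q)\mapsto s(s(q))\mapsto\cdots$. Your Step~1 is essentially the paper's proof of Theorem~\emph{A} (the symmetrised Bochner/polarisation identity), which in the paper is deduced \emph{from} Theorem~B; you are running the implication in the opposite direction, with Step~2 supplying the inversion that the paper obtains by localisation.

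The gap is in Step~2, precisely where you locate the crux, and it is not merely technical. Writing $\xi:=\na\phi_\Psi-\Psi$, you know only that $\xi\in L^{r'}(\M)$ with $\na^\dagger\xi=-\phi_\Psi\in L^{r'}(\M)$. For such a field the normal trace $\nu\mres\xi$ exists only as an element of $W^{-1/r',r'}(\p\M)$, the dual of the trace space $W^{1-1/r,r}(\p\M)$ of $W^{1,r}\Omega^k(\M)$; it is not an $L^{r'}(\p\M)$ function. Hence the boundary term $\int_{\p\M}\langle u,\nu\mres\xi\rangle\,\dd\Sigma$ cannot be bounded by $C\|u\|_{L^r(\p\M)}\|\Psi\|_{L^{r'}(\M)}$ as you claim: the best available bound is $C\|u\|_{W^{1,r}(\M)}\|\Psi\|_{L^{r'}(\M)}$ with a constant that is $O(1)$ rather than small, so this term cannot be absorbed and the whole scheme collapses. (Taking $\Psi$ smooth by density does not help; it makes $\nu\mres\xi$ a function but gives no control of $\|\nu\mres\xi\|_{L^{r'}(\p\M)}$ by $\|\Psi\|_{L^{r'}(\M)}$, since traces of merely $L^{r'}$ fields are not quantitatively controlled.) Two further points would still need substantial work even if this were repaired: (i) solvability of $\na^\dagger\na\phi+\phi=\na^\dagger\Psi$ in $W^{1,r'}$ for all $r'\in\one$ under a complementing boundary condition extending the $\diamondsuit$-condition is a ``very weak solution'' statement that must itself be proved without appeal to Gaffney; and (ii) the reduction to smooth $u$ is not innocent when $r>q$: for approximants $u_j\to u$ in the $W^{1,q}$/graph topology, the suprema $S[u_j]$ (the quantity in the statement computed for $u_j$) are not obviously uniformly bounded by $S[u]$, because the error $\mathcal{B}(u_j-u,\phi)$ is controlled only by the $L^r$-graph norm of $u_j-u$, which is exactly what is unknown. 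The paper's exponent bootstrap is what replaces this.
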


The case for $n=3$, $g_0$ = the Euclidean metric, and $k=1$ is due to Kozono--Yanagisawa \cite{ky}, based on which a new, refined proof of the Hodge decomposition theorem for vectorfields is obtained. We shall essentially follow the strategies in \cite{ky} to prove Theorem~B.

\subsection{Organisation}
The remaining parts of this note is organised as follows.

First, in \S\ref{sec: prelim} we collect a few preliminary results on geometry. Next, in \S\ref{sec: var}, a proof of Theorem~B shall be given by generalising the arguments in \cite{ky} for vectorfieldson 3D domains. This shall be utilised in \S~\ref{sec: gaffney} to prove Theorem~A. Indeed, our arguments in \S\ref{sec: gaffney} are global. Finally, several concluding remarks will be given in \S\ref{sec: rem}.

\section{Preliminaries}\label{sec: prelim}

Let us first note the following commutation relations between the differential, the codifferential, the Hodge star, and the trace operators:
\begin{lemma}\label{lem: n and t}
For any sufficiently regular differential $k$-form $\omega$ and $(k+1)$-form $\eta$ on the manifold-with-boundary $(\M^n,g_0)$, we have
\begin{equation*}
\begin{cases}
\star (\nnn \omega) = \ttt(\star \omega),\quad
\star(\ttt\omega) = \nnn(\star\omega);\\
\ttt(d\omega) = d(\ttt\omega),\quad \nnn(\delta\omega) = \delta(\nnn \omega).
\end{cases}
\end{equation*}
In addition, it holds that
\begin{align*}
\ttt \omega \wedge \star \nnn \eta = \langle \omega, \nu \mres \eta \rangle_{\Lambda^k}  \,\dd \Sigma,
\end{align*}
where $\dd\Sigma$ is the Riemannian volume measure on $\p\M$ induced from $g_0$. 
\end{lemma}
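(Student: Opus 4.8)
The plan is to reduce every assertion to a pointwise identity in the fibre $\Lambda^\bullet T^*_p\M$ over a boundary point $p\in\p\M$, since all the operators appearing --- $\star$, $\mres$, $\wedge$, and the algebraic content of $\ttt$ and $\nnn$ --- are fibrewise, while the only genuinely differential statement, $\ttt(d\omega)=d(\ttt\omega)$, is the naturality of the exterior derivative under the inclusion $\iota:\p\M\emb\M$. First I would fix $p$ and choose a local orthonormal coframe $\{e^1,\dots,e^{n-1},e^n\}$ near $p$ adapted to $\p\M$, so that $e^n=\nu^\flat$ and $e^1,\dots,e^{n-1}$ restrict to an orthonormal coframe of $(\p\M,\iota^\#g_0)$. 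Then any $k$-form decomposes as $\omega=\omega_{\mathbf t}+e^n\wedge\omega_{\mathbf n}$, with $\omega_{\mathbf t}$ and $\omega_{\mathbf n}$ lying in the span of $e^1,\dots,e^{n-1}$, and along $\p\M$ one has $\ttt\omega=\omega_{\mathbf t}$, $\nnn\omega=e^n\wedge\omega_{\mathbf n}=\nu^\flat\wedge(\nu\mres\omega)$, and $\nu\mres\omega=\omega_{\mathbf n}$. This single normal form makes all four identities transparent.

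For the Hodge--trace relations I would first establish, by a direct computation in the adapted coframe, that for a tangential form $\theta$ (in the span of $e^1,\dots,e^{n-1}$) of degree $j$ one has $\star\theta=\pm\,e^n\wedge\widetilde\star\,\theta$ and $\star(e^n\wedge\theta)=\pm\,\widetilde\star\,\theta$, with explicit signs depending only on $j$ and $n$, where $\widetilde\star$ denotes the Hodge star of $(\p\M,\iota^\#g_0)$; equivalently, $\star$ intertwines $\nu^\flat\wedge(\cdot)$ with $\pm\,\nu\mres\star(\cdot)$. Applying this to the pieces $\omega_{\mathbf t},\omega_{\mathbf n}$ of $\omega$ and then isolating, in $\star\omega$, the purely tangential part versus the part containing $e^n$, one reads off $\star(\nnn\omega)=\ttt(\star\omega)$; replacing $\omega$ by $\star\omega$ and using $\star\star=(-1)^{k(n-k)}\mathrm{id}$ gives the companion $\star(\ttt\omega)=\nnn(\star\omega)$. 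As for the differential relations, $\ttt(d\omega)=d(\ttt\omega)$ is exactly $\iota^\#\circ d=d\circ\iota^\#$; and since $\delta=\pm\,\star d\,\star$ on $k$-forms, conjugating this identity by $\star$ and inserting the two Hodge--trace relations turns $\ttt(d\,\cdot)=d(\ttt\,\cdot)$ into $\nnn(\delta\,\cdot)=\delta(\nnn\,\cdot)$, so the second line of the display follows formally from the first.

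For the last identity I would again work in the adapted coframe, now with $\omega\in\Omega^k(\M)$ and $\eta\in\Omega^{k+1}(\M)$. Along $\p\M$ we have $\ttt\omega=\omega_{\mathbf t}$ and $\nnn\eta=e^n\wedge\eta_{\mathbf n}$, where $\eta_{\mathbf n}=\nu\mres\eta$ is a tangential $k$-form; by the coframe formula, $\star(\nnn\eta)=\star(e^n\wedge\eta_{\mathbf n})=\pm\,\widetilde\star\,\eta_{\mathbf n}$, which is again tangential. Hence $\ttt\omega\wedge\star(\nnn\eta)=\pm\,\omega_{\mathbf t}\wedge\widetilde\star\,\eta_{\mathbf n}=\pm\,\langle\omega_{\mathbf t},\eta_{\mathbf n}\rangle\,e^1\wedge\dots\wedge e^{n-1}$ by the defining property of $\widetilde\star$ on the boundary. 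Identifying $e^1\wedge\dots\wedge e^{n-1}$ with $\dd\Sigma=\nu\mres\dvg$, and noting that the pointwise inner product of tangential forms is the same whether computed with $g_0$ or with $\iota^\#g_0$, the right-hand side is $\pm\,\langle\omega,\nu\mres\eta\rangle_{\Lambda^k}\,\dd\Sigma$, which is the claim.

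The only real work is sign bookkeeping. One must fix the orientation convention --- outward normal, so that $\nu\mres\dvg$ orients $\p\M$ --- and then verify that the powers of $-1$ produced by $\star$ on the relevant degrees ($k$, $k\pm1$, $n-k$, $n-k-1$), by $\delta=\pm\,\star d\,\star$, and by $\star\star=(-1)^{k(n-k)}\mathrm{id}$, combine to yield a genuine $+$ in each of the four stated identities. This is routine but degree-dependent; carrying out the case $n=2$ by hand already pins down the conventions and exhibits the cancellations, and the general case is the same computation with the exponents tracked symbolically.
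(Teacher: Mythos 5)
Your argument is correct: the paper itself does not prove this lemma but simply cites Proposition~1.2.6 of Schwarz, and the adapted orthonormal coframe decomposition $\omega=\omega_{\mathbf t}+\nu^\flat\wedge\omega_{\mathbf n}$ you use --- with the first pair of Hodge--trace identities read off as a comparison of tangential versus normal components of $\star\omega$, the $\delta$--$\nnn$ relation obtained by conjugating $\iota^\#\circ d=d\circ\iota^\#$ with $\star$, and the final identity reduced to the defining property of the boundary Hodge star --- is exactly the standard proof given in that reference. The sign bookkeeping you defer is indeed routine (and for the first two identities it is vacuous, since they are pure component comparisons), so the proposal is complete in all essentials.
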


\begin{proof}
See Proposition~1.2.6, p.27 in Schwarz \cite{s}.   \end{proof}

The following integration by parts formula can be deduced directly from the Stokes'/Gauss--Green's theorem:
\begin{lemma}
Let $\omega$ and $\eta$ be as in Lemma~\ref{lem: n and t}. Then
\begin{align*}
(d\omega,\eta)_{g_0} = (\omega,\delta\eta)_{g_0} + \int_{\p\M} \ttt\omega\wedge\star\nnn\eta.
\end{align*}
\end{lemma}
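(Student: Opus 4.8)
The plan is to reduce the identity to the classical Stokes' theorem applied to a suitable $(n-1)$-form. First I would recall the pointwise Leibniz-type identity for the exterior derivative of a wedge product: for a $k$-form $\omega$ and a $(k+1)$-form $\eta$, one has $d(\omega \wedge \star\eta) = d\omega \wedge \star\eta + (-1)^k \omega \wedge d(\star\eta)$. The second step is to rewrite $d(\star\eta)$ in terms of $\delta\eta$. Since $\delta$ on $(k+1)$-forms is, up to a sign depending on $n$ and $k$, the operator $\pm\star d\star$, applying $\star$ and using $\star\star = (-1)^{\cdots}\,\mathrm{Id}$ on the relevant degree converts $(-1)^k\omega \wedge d(\star\eta)$ into $-\omega \wedge \star(\delta\eta)$; the signs are arranged precisely so that the codifferential is the formal adjoint of $d$, which is exactly the bookkeeping that makes the final formula come out with a plus sign. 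Thus $d(\omega\wedge\star\eta) = \langle d\omega,\eta\rangle_{\bigwedge^{k+1}} - \langle\omega,\delta\eta\rangle_{\bigwedge^k}$ as $n$-forms on $\M$.

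The third step is to integrate this identity over $\M$ and invoke Stokes' theorem, which gives $\int_\M d(\omega\wedge\star\eta) = \int_{\p\M} \iota^\#(\omega\wedge\star\eta)$, where $\iota\colon\p\M\emb\M$ is the inclusion. Rearranging yields
\begin{align*}
(d\omega,\eta)_{g_0} = (\omega,\delta\eta)_{g_0} + \int_{\p\M}\iota^\#(\omega\wedge\star\eta).
\end{align*}
The final step is to identify the boundary integrand with $\ttt\omega\wedge\star\nnn\eta$. Because $\iota^\#$ is a ring homomorphism on forms, $\iota^\#(\omega\wedge\star\eta) = \iota^\#\omega \wedge \iota^\#(\star\eta) = \ttt\omega \wedge \ttt(\star\eta)$; and by Lemma~\ref{lem: n and t} we have $\ttt(\star\eta) = \star(\nnn\eta)$, so the pulled-back integrand equals $\ttt\omega\wedge\star\nnn\eta$, completing the proof. (Alternatively, one can observe directly that only the tangential part of $\omega$ and the normal part of $\eta$ survive the pullback, since the volume form on $\p\M$ is $\nu\mres\dvg$ and any factor of $\nu^\flat$ is annihilated by $\iota^\#$.)

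I expect the only genuinely delicate point to be the sign bookkeeping in the second step — keeping track of the $(-1)^k$ from the Leibniz rule against the degree-dependent signs in $\star\star$ and in the defining relation $\delta = (-1)^{\cdots}\star d\star$. This is entirely routine but easy to get wrong; everything else (the Leibniz rule, Stokes, and the homomorphism property of $\iota^\#$) is standard, and the required trace identity $\ttt\circ\star = \star\circ\nnn$ has already been recorded in Lemma~\ref{lem: n and t}. In fact, since the statement is attributed to standard references, I would most likely just cite Schwarz~\cite{s} (e.g.\ the discussion around Proposition~2.1.2) rather than reproduce the sign computation in full.
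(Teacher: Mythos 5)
Your proposal is correct and is precisely the standard derivation (Leibniz rule for $d(\omega\wedge\star\eta)$, Stokes' theorem, and the trace identity $\ttt(\star\eta)=\star(\nnn\eta)$ from Lemma~\ref{lem: n and t}) that underlies the cited result; the paper itself simply refers to Proposition~2.1.2 of Schwarz~\cite{s}, exactly as you anticipated doing. No gaps.
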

\begin{proof}
See Proposition~2.1.2, p.60 in Schwarz \cite{s}.   \end{proof}

In addition, we have the Sobolev embedding theorems (\cite{h}) for differential forms on compact $(\M^n,g_0)$: $W^{1,q}\Omega^k(\M) \emb L^r\Omega^k(\M)$ for $1\leq r< \frac{nq}{n-q}$, and $W^{1,s}\Omega^k(\M) \emb C^0\Omega^k(\M)$ for $s>n$. 

\section{Variational Inequalities}\label{sec: var}

To prove Theorem~B, we first argue for $\M =\R^n$ with $u$ compactly supported. Then we deduce the assertion for $\M = \overline{\R^n_+}$ by even or odd extensions across the boundary. The general case holds via a partition of unity argument, with the covering charts being sufficiently refined. 


\subsection{The case of $\R^n$}\label{subsec a1}
In this subsection $(\bullet,\bullet)$ always denotes the standard Euclidean inner product. We shall prove
\begin{lemma}\label{lem: whole space}
Let $u \in W^{1,q}\Omega^k(\R^n)$ for $q \in ]1,\infty[$. Assume that there is an $r \in ]1,\infty[$ such that
\begin{align*}
\sup\left\{\frac{\left| (d u, d \phi) +( \delta u, \delta \phi) + ( u,\phi)\right|}{\|d\phi\|_{L^{r'}(\R^n)} + \|\delta\phi\|_{L^{r'}(\R^n)}+\|\phi\|_{L^{r'}(\R^n)}}  :\,\phi \in C^\infty_{c}\Omega^k(\R^n) \right\} < \infty.
\end{align*} 
Then $u \in W^{1,r}\Omega^k(\R^n)$ with the estimate
\begin{equation}\label{r-est on Rn, lemma}
\|u\|_{W^{1,r}(\R^n)} \leq C\sup\left\{\frac{\left| ( d u, d \phi) +( \delta u, \delta \phi) + ( u,\phi) \right|}{\|d\phi\|_{L^{r'}(\R^n)} + \|\delta\phi\|_{L^{r'}(\R^n)}+\|\phi\|_{L^{r'}(\R^n)}}  :\,\phi \in C^\infty_{c}\Omega^k(\R^n) \right\},
\end{equation}
where $C$ depends only on $r$, $k$, and $n$.
\end{lemma}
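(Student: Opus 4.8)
The plan is to exploit the fact that on $\mathbb{R}^n$ with the flat metric, the Hodge Laplacian acting componentwise on a $k$-form $u = \sum_I u_I \, dx^I$ is just the scalar Laplacian applied to each coefficient, so that $(d\delta + \delta d)u = -\Delta u$ componentwise, and the first-order Gaffney-type control reduces to a Calderón–Zygmund estimate for the Riesz transforms. Concretely, I would first use the hypothesis together with the density of $C^\infty_c\Omega^k(\mathbb{R}^n)$ in $L^{r'}\Omega^k(\mathbb{R}^n)$ (and the finiteness of the supremum) to identify, via the Riesz representation / Hahn–Banach argument, three $L^r$ forms: the pairing $\phi \mapsto (du,d\phi) + (\delta u,\delta\phi) + (u,\phi)$ extends to a bounded functional on a suitable subspace of $L^{r'}$, hence $d^\dagger(du) + \delta^\dagger(\delta u) + u = \delta(du) + d(\delta u) + u$ exists as an element $f \in L^r\Omega^k(\mathbb{R}^n)$ in the distributional sense, with $\|f\|_{L^r} \lesssim M$, where $M$ denotes the supremum on the right-hand side of \eqref{r-est on Rn, lemma}.

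Next I would solve $(-\Delta + 1)w = f$ on $\mathbb{R}^n$ (Bessel potential), obtaining $w = G_2 * f$ with $w \in W^{2,r}\Omega^k(\mathbb{R}^n)$ and $\|w\|_{W^{2,r}} \lesssim \|f\|_{L^r} \lesssim M$ by the Mikhlin–Hörmander multiplier theorem applied componentwise. Since $\Delta$ on $\mathbb{R}^n$ equals $-(d\delta + \delta d)$ acting on forms, $w$ satisfies $(d\delta + \delta d + 1)w = f = (d\delta + \delta d + 1)u$ distributionally; the operator $d\delta + \delta d + 1$ is (componentwise) $-\Delta + 1$, which is injective on tempered distributions, so $u = w \in W^{2,r} \subset W^{1,r}$, and then $\|u\|_{W^{1,r}} \le \|w\|_{W^{1,r}} \lesssim \|w\|_{W^{2,r}} \lesssim M$. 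There is a subtlety about whether $u$ itself is tempered: since $u \in W^{1,q}$ it is tempered, and $w \in W^{2,r}$ is tempered, so the uniqueness argument for $(-\Delta+1)$ applies directly. This yields \eqref{r-est on Rn, lemma}.

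An alternative, perhaps cleaner route that avoids invoking $W^{2,r}$-regularity: test the identity $(-\Delta+1)u = f$ against $(-\Delta+1)^{-1}$ composed with $d$, $\delta$, and the identity, and use the boundedness on $L^r$ of the operators $d(-\Delta+1)^{-1}d^\dagger$, $\delta(-\Delta+1)^{-1}\delta^\dagger$ and their first-order relatives $\nabla(-\Delta+1)^{-1}d^\dagger$, $\nabla(-\Delta+1)^{-1}\delta^\dagger$ (all Fourier multipliers of Mikhlin class, again componentwise because the metric is flat). Writing $\nabla u = \nabla(-\Delta+1)^{-1}f$ and expanding $f = \delta d u + d\delta u + u$ shows $\|\nabla u\|_{L^r}$ is controlled by $\|du\|_{L^r} + \|\delta u\|_{L^r} + \|u\|_{L^r}$, each of which is in turn bounded by $M$ by testing the defining supremum against appropriate mollified candidates. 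Either way, the main obstacle is the bookkeeping in the first step: carefully passing from the finiteness of the supremum over $C^\infty_c$ test forms to the existence of the $L^r$-distribution $f$ and its norm bound, since one must check that the functional $\phi \mapsto (du,d\phi)+(\delta u,\delta\phi)+(u,\phi)$ — a priori defined only on the dense subspace $C^\infty_c\Omega^k$ with the nonstandard "norm" $\|d\phi\|_{L^{r'}} + \|\delta\phi\|_{L^{r'}} + \|\phi\|_{L^{r'}}$ — genuinely extends to a bounded linear functional on $L^{r'}\Omega^k$ (here the flat-space identity $\|d\phi\|^2 + \|\delta\phi\|^2 = \|\nabla\phi\|^2$, valid after integrating by parts for $\phi \in C^\infty_c$, together with the fact that $\|\phi\|_{L^{r'}} \le C(\|d\phi\|_{L^{r'}} + \|\delta\phi\|_{L^{r'}} + \|\phi\|_{L^{r'}})$ trivially, must be leveraged so that the dual quantity is comparable to the dual of an $L^{r'}$-type norm). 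This is exactly the structure exploited by Kozono–Yanagisawa, and the flat-space Fourier analysis makes every multiplier estimate explicit.
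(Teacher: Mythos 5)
Your overall strategy --- rewrite the hypothesis as information about $(\1-\Delta)u$ and invert $\1-\Delta$ by a Fourier multiplier --- is viable, but the first step as written contains a genuine error. From the finiteness of the supremum you may only conclude that the functional $\phi \mapsto (du,d\phi)+(\delta u,\delta\phi)+(u,\phi)$ is bounded with respect to the norm $\|d\phi\|_{L^{r'}}+\|\delta\phi\|_{L^{r'}}+\|\phi\|_{L^{r'}} \leq C\|\phi\|_{W^{1,r'}}$; this identifies $f:=\delta du + d\delta u + u$ as an element of $W^{-1,r}\Omega^k(\R^n)$, \emph{not} of $L^r\Omega^k(\R^n)$. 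Your claim that the functional ``genuinely extends to a bounded linear functional on $L^{r'}\Omega^k$'' is false: for any $u\in W^{1,r}\Omega^k(\R^n)\setminus W^{2,r}\Omega^k(\R^n)$ the hypothesis holds with $M\lesssim \|u\|_{W^{1,r}}$ (apply H\"{o}lder to each of the three pairings), yet $(\1-\Delta)u\notin L^r$. Accordingly the conclusion of your first route, $u\in W^{2,r}$, is false in general --- the lemma only asserts $u\in W^{1,r}$, and that is all that can be true. Note also that the Hahn--Banach step does not identify the representing triple with $(du,\delta u,u)$: it only produces \emph{some} $F,G,H\in L^r$ with $T(\phi)=(F,d\phi)+(G,\delta\phi)+(H,\phi)$, whence again $(\1-\Delta)u=\delta F+dG+H\in W^{-1,r}$ and nothing better. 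Similarly, in your alternative route the individual bounds $\|du\|_{L^r},\|\delta u\|_{L^r}\lesssim M$ cannot be ``tested out'' of the supremum separately; producing test forms for which $d\phi$ sweeps out a dense subset of $L^{r'}\Omega^{k+1}$ while $\delta\phi$ and $\phi$ stay controlled is essentially the content of the lemma itself.

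The correction closes the gap and yields a legitimate proof, genuinely different in packaging from the paper's: once $f=(\1-\Delta)u\in W^{-1,r}$ with $\|f\|_{W^{-1,r}}\lesssim M$, the Bessel potential $(\1-\Delta)^{-1}$ maps $W^{-1,r}$ boundedly into $W^{1,r}$ by Mikhlin--H\"{o}rmander (componentwise, since the metric is flat), and since $u\in W^{1,q}$ is tempered one gets $u=(\1-\Delta)^{-1}f\in W^{1,r}$ with $\|u\|_{W^{1,r}}\lesssim M$. The paper avoids negative-order spaces altogether: it tests the supremum with $\phi=\na_j\psi$, commutes $\na_j$ past $d$ and $\delta$ to turn the numerator into $((\1-\Delta)\psi,\na_j u)$, bounds the denominator by $\|(\1-\Delta)\psi\|_{L^{r'}}$ via Calder\'{o}n--Zygmund, and recovers $\|\na_j u\|_{L^r}$ by duality after proving that $\left\{(\1-\Delta)\psi:\psi\in C^\infty_c\Omega^k(\R^n)\right\}$ is dense in $L^{r'}\Omega^k(\R^n)$. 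The two arguments rest on the same multiplier computation; the paper's formulation has the advantage of transplanting directly to the half-space and chart settings of \S\ref{subsec a2}--\ref{subsec a3}, where the duality-over-a-dense-set mechanism is what actually gets reused.
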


\begin{proof}
Consider the space
\begin{equation}
\fs_k := \left\{(\1-\Delta)\psi:\,\psi \in C^\infty_{c}\Omega^k(\R^n)\right\},
\end{equation}
where $\1$ is the identity map and $\Delta$ is the Euclidean Laplacian. We \emph{claim} that $\fs_k$ is dense in $L^{r'}\Omega^k(\R^n)$ for any $r \in ]1,\infty[$. 

This statement is proved below by contradiction. Suppose that there were an $f \in L^{r}\Omega^k(\R^n) \sim \{0\}$ annihilating $\fs_k$; that is,
\begin{align}\label{Fourier}
\int_{\R^n} f \bullet (\1-\Delta)\psi = 0 \qquad \text{ for all } \psi \in \fs_k.
\end{align}
The symbol $\bullet$ denotes the usual Euclidean inner product for differential $k$-forms on $\R^n$. If $k=0$, \emph{i.e.}, $f$ and $\psi$ are scalar functions, it then follows that
\begin{align*}
(\1-\Delta)f=0 \qquad \text{ as Schwartz distributions}.
\end{align*}
The Fourier transform $\hat{f}$ of $f$ satisfies $(1+4\pi^2|\xi|^2) \hat{f}(\xi)=0$ for all $\xi \in \R^n$, thus $\hat{f} \equiv 0$ and hence $f \equiv 0$. This yields a contradiction.

For $k \geq 1$, let us expand differential $k$-forms with respect to the basis consisting of simple vectorfields formed by the canonical basis for $\R^n$. That is, consider 
\begin{align*}
f = \sum f_{j_1j_2\ldots j_k} e_{j_1} \wedge \cdots \wedge e_{j_k}
\end{align*}
where $\{e_1,\ldots,e_n\}$ is the Euclidean canonical basis, and the summation is taken over ascending $k$-tuples $(j_1,\ldots,j_k)$ such that $1 \leq j_1 <j_2<\ldots<j_{k} \leq n$; similarly for $\psi$. Then Eq.~\eqref{Fourier} is equivalent to $\int_{\R^n} f_{j_1j_2\ldots j_k}(\1-\Delta)\psi_{j_1j_2\ldots j_k}\,\dd x = 0$
for all such indices. The arguments above implies that $ f_{j_1j_2\ldots j_k} \equiv 0$. Hence $f \equiv 0$, which gives the contradiction and proves the  \emph{claim}.

Now we are ready to prove the lemma. Fix $r \in ]1,\infty[$ and take a test form $\phi = \na_j \psi$ for some $\psi \in C^\infty_{c}\Omega^k(\R^n)$ and $j \in \{1,2,\ldots,n\}$ fixed. Then
\begin{align}\label{new 1}
&\sup\left\{\frac{\left| (d u, d \phi) +(\delta u, \delta \phi)+ (u,\phi) \right|}{\|d\phi\|_{L^{r'}(\R^n)} + \|\delta\phi\|_{L^{r'}(\R^n)}+\|\phi\|_{L^{r'}(\R^n)}}  :\,\phi \in C^\infty_{c}\Omega^k(\R^n) \right\} \nonumber\\
&\quad \geq 
\sup\left\{\frac{\left| (d u, d \na_j\psi) +( \delta u, \delta \na_j\psi) + ( u,\na_j\psi) \right|}{\|d\na_i\psi\|_{L^{r'}(\R^n)} + \|\delta\na_j\psi\|_{L^{r'}(\R^n)}+\|\na_j\psi\|_{L^{r'}(\R^n)}}  :\,\psi \in C^\infty_{c}\Omega^k(\R^n) \right\}.
\end{align}
On $\R^n$ we can commute $d$ with $\na_j$ and $\delta$ with $\na_j$. Moreover, note that $-\Delta=d\delta+\delta d$, so
\begin{align*}
( d u, d \na_j\psi)+( \delta u, \delta \na_j\psi) + ( u,\na_j\psi) = ( \left\{\1-\Delta\right\}\psi, \na_j u ).
\end{align*}

By the standard Calder\'{o}n--Zygmund estimate on $\R^n$
\begin{align*}
\|\na \na \psi\|_{L^{r'}(\R^n)} \leq C\left( \|\Delta\psi\|_{L^{r'}(\R^n)} + \|\psi\|_{L^{r'}(\R^n)} \right)
\end{align*}
and the interpolation inequality
\begin{align*}
\|\na \psi\|_{L^{r'}(\R^n)} \leq C\left(1+\|\na\na\psi\|_{L^{r'}(\R^n)}\right),
\end{align*}
one may continue the previous estimate by
\begin{align*}
&\sup\left\{\frac{\left| (d u, d \phi) +( \delta u, \delta \phi) + ( u,\phi)\right|}{\|d\phi\|_{L^{r'}(\R^n)} + \|\delta\phi\|_{L^{r'}(\R^n)}+\|\phi\|_{L^{r'}(\R^n)}}  :\,\phi \in C^\infty_{c}\Omega^k(\R^n) \right\} \\
&\quad \geq c
\sup\left\{\frac{\left| (\left\{\1-\Delta\right\}\psi, \na_j u )\right|}{ \|\{1-\Delta\}\psi\|_{L^{r'}(\R^n)}}  :\,\psi \in C^\infty_{c}\Omega^k(\R^n) \right\}\\
&\quad = c \sup\left\{ \frac{\left|(\rho, \na_j u )\right|}{ \|\rho\|_{L^{r'}(\R^n)}}  :\,\rho \in \fs_k \right\}.
\end{align*}

By the \emph{claim} above (namely, the density of $\fs_k$ in $L^{r'}\Omega^k(\R^n)$) and the duality characterisation of the $L^r$-norm, it holds that
\begin{align*}
\sup\left\{ \frac{\left| (\rho, \na_j u)\right|}{ \|\rho\|_{L^{r'}(\R^n)}}  :\,\rho \in \fs_k \right\} = \sup\left\{ \frac{\left| ( \rho, \na_j u )\right|}{ \|\rho\|_{L^{r'}(\R^n)}}  :\,\rho \in L^{r'}\Omega^k(\R^n)  \right\}= \|\na_ju\|_{L^r(\R^n)}.
\end{align*}
As the index $j$ is arbitrary, we can chain together the previous inequalities to get
\begin{align*}
\|\na u\|_{L^{r}(\R^n)} \leq C \sup\left\{\frac{\left| ( d u, d \phi)+( \delta u, \delta \phi) +( u,\phi) \right|}{\|d\phi\|_{L^{r'}(\R^n)} + \|\delta\phi\|_{L^{r'}(\R^n)}+\|\phi\|_{L^{r'}(\R^n)}}  :\,\phi \in C^\infty_{c}\Omega^k(\R^n) \right\},
\end{align*}
where $\na$ is the covariant derivative from  differential $k$-forms to $(k+1)$-forms induced by the Levi-Civita connection on $\R^n$. 

Repeating the same arguments with $\phi = \psi$ instead of $\phi = \na_j\psi$, we obtain
\begin{align*}
\|u\|_{L^{r}(\R^n)} \leq C\sup\left\{\frac{\left| ( d u, d \phi)+( \delta u, \delta \phi) + ( u,\phi) \right|}{\|d\phi\|_{L^{r'}(\R^n)} + \|\delta\phi\|_{L^{r'}(\R^n)}+\|\phi\|_{L^{r'}(\R^n)}}  :\,\phi \in C^\infty_{c}\Omega^k(\R^n) \right\}.
\end{align*}
The previous two estimates lead to Eq.~\eqref{r-est on Rn, lemma}. All the constants $C$ and $c$ in this proof depend on nothing but $r$, $n$, and $k$. Therefore, $u \in W^{1,r}\Omega^k(\R^n)$ and the proof is complete.   \end{proof}

\begin{remark}
Since Lemma~\ref{lem: whole space} is only concerned with  differential forms on $\R^n$ and our arguments utilise the Fourier transform, the Laplacian we considered in the proof is the \emph{Hodge Laplacian}, which differs from the Laplace--Beltrami operator by a sign. In contrast, when working with manifolds in the later parts of the paper, for notational convenience we shall always take $\Delta$ to be the Laplace--Beltrami operator, \emph{i.e.}, $\Delta=d\delta+\delta d$.
\end{remark}

\subsection{The case of $\overline{\R^n_+}$} \label{subsec a2}

The analogue of Lemma~\ref{lem: whole space} holds for the halfspace $\overline{\R^n_+}$. As in the previous subsection, 
$(\bullet,\bullet)$ is reserved for the standard Euclidean inner product in Lemma~\ref{lem: halfspace} below. Let $\widehat{W^{1,q}_\diamondsuit} \Omega^k(\M,g_0)$ denote either $\widehat{W^{1,q}_N} \Omega^k(\M,g_0)$ or $\widehat{W^{1,q}_D} \Omega^k(\M,g_0)$.

\begin{lemma}\label{lem: halfspace}
Let $u \in \widehat{W^{1,q}_\diamondsuit} \Omega^k(\half)$ for some $q \in ]1,\infty[$. Assume that 
\begin{align}\label{assumption, Rn+}
\sup\left\{\frac{\left| (d u, d \phi) +( \delta u, \delta \phi) + ( u,\phi) \right|}{\|d\phi\|_{L^{r'}(\R^n_+)} + \|\delta\phi\|_{L^{r'}(\R^n_+)}+\|\phi\|_{L^{r'}(\R^n_+)}}  :\,\phi \in C^\infty_{\diamondsuit} \Omega^k(\R^n_+) \right\} < \infty
\end{align} 
for some $r \in ]1,\infty[$. Then $u \in W^{1,r}\Omega^k(\R^n_+)$ with the estimate
\begin{equation}\label{r-est on Rn+, lemma}
\|u\|_{W^{1,r}(\R^n_+)} \leq C\sup\left\{\frac{\left| ( d u, d \phi) +( \delta u, \delta \phi) + (u,\phi)\right|}{\|d\phi\|_{L^{r'}(\R^n_+)} + \|\delta\phi\|_{L^{r'}(\R^n_+)}+\|\phi\|_{L^{r'}(\R^n_+)}}  :\,\phi \in C^\infty_{\diamondsuit} \Omega^k(\R^n_+) \right\},
\end{equation}
where the constant $C$ depends only on $r$, $n$, and $k$.
\end{lemma}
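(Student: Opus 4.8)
The strategy is to reduce Lemma~\ref{lem: halfspace} to Lemma~\ref{lem: whole space} by an even/odd reflection across the hyperplane $\p\R^n_+=\{x_n=0\}$. I would begin by fixing the boundary condition, say $\diamondsuit=N$ (the Neumann case), and decompose a $k$-form $u$ on $\overline{\R^n_+}$ according to whether or not $e_n$ appears in each simple summand: write $u=u_{\parallel}+dx_n\wedge u_{\perp}$, where $u_{\parallel},u_{\perp}$ have no $dx_n$ factor. The condition $\nu\mres u|_{\p\R^n_+}=0$, with $\nu=-e_n$, amounts to $u_{\perp}|_{x_n=0}=0$. I then define the extension $Eu$ to all of $\R^n$ by reflecting $u_{\parallel}$ evenly in $x_n$ and $u_{\perp}$ oddly in $x_n$ (with the appropriate sign bookkeeping so that $d$ and $\delta$ intertwine with the reflection). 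A short computation — which I will not grind through here — shows that with this choice $d(Eu)=E'(du)$ and $\delta(Eu)=E''(\delta u)$, where $E',E''$ are the analogous even/odd extension operators on $(k+1)$- and $(k-1)$-forms; equivalently, the Neumann boundary data is exactly what makes these reflected forms lie in $W^{1,q}$ (rather than merely $L^q$) across $\{x_n=0\}$. For $\diamondsuit=D$, one uses the opposite parities (odd on $u_\parallel$, even on $u_\perp$), which matches $\ttt u=u_\parallel|_{x_n=0}=0$; by the Hodge-star identities of Lemma~\ref{lem: n and t} one can alternatively deduce the Dirichlet case from the Neumann case applied to $\star u$.

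Next I would transfer the hypothesis. Given the supremum in \eqref{assumption, Rn+} is finite, I want to bound the whole-space supremum
\begin{align*}
\sup\left\{\frac{\left| (d (Eu), d \Phi) +( \delta (Eu), \delta \Phi) + ( Eu,\Phi) \right|}{\|d\Phi\|_{L^{r'}(\R^n)} + \|\delta\Phi\|_{L^{r'}(\R^n)}+\|\Phi\|_{L^{r'}(\R^n)}}  :\,\Phi \in C^\infty_{c}\Omega^k(\R^n) \right\}
\end{align*}
in terms of it. The point is that, for a test form $\Phi\in C^\infty_c\Omega^k(\R^n)$, one can symmetrise: replace $\Phi$ by its even/odd-reflected average $\phi:=\frac12(\Phi+R\Phi)$ (same parities as $Eu$), so that $\phi$ restricted to $\overline{\R^n_+}$ lies in $C^\infty_{N}\Omega^k(\R^n_+)$ — here the boundary condition on the test form is automatic because the reflected-symmetric part of any form has vanishing normal component (resp. tangential part) on $\{x_n=0\}$. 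Since $Eu$ has definite parity, the antisymmetric part of $\Phi$ contributes nothing to the three pairings, so the numerator is unchanged; and the denominator only decreases under this averaging (by the triangle inequality and the isometry of $R$ on $L^{r'}$). Then each pairing over $\R^n$ equals twice the corresponding pairing over $\R^n_+$, and using $d(Eu)=E'(du)$ etc. one recognises the quotient as (a constant times) the quotient appearing in \eqref{assumption, Rn+}. Hence the whole-space supremum is $\le C$ times the half-space one, so Lemma~\ref{lem: whole space} applies to $Eu$ and yields $Eu\in W^{1,r}\Omega^k(\R^n)$ with the corresponding estimate; restricting back to $\R^n_+$ and noting $\|u\|_{W^{1,r}(\R^n_+)}\le\|Eu\|_{W^{1,r}(\R^n)}$ gives \eqref{r-est on Rn+, lemma}, with $C$ depending only on $r,n,k$.

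\textbf{Main obstacle.} The delicate point — and the place where care is genuinely needed — is the claim that $d$ and $\delta$ commute with the mixed even/odd reflection and, in particular, that no singular (distributional) terms supported on $\{x_n=0\}$ appear when one differentiates $Eu$. This is exactly where the boundary condition $\nu\mres u|_{\p}=0$ (resp. $u\wedge\nu^\flat|_{\p}=0$) is used: it guarantees that the odd-reflected component vanishes on the hyperplane so that its weak $x_n$-derivative has no jump contribution, while the even-reflected component is automatically continuous. I would verify this by writing $d$ and $\delta$ in Euclidean coordinates, splitting each into its $\p_{x_n}$-part and its tangential part, and checking the parities summand-by-summand; the bookkeeping is routine but must be done honestly, since getting a parity wrong would either break the intertwining or introduce a spurious boundary distribution. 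A secondary (milder) technical point is the density/approximation step needed to go from smooth compactly supported test forms on $\R^n_+$ to the class $C^\infty_\diamondsuit\Omega^k(\R^n_+)$ and back, and the verification that the reflected test form is genuinely smooth across $\{x_n=0\}$ — which again is precisely a parity statement, automatic for the symmetric part under $R$.
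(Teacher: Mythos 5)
Your proposal is correct and follows essentially the same route as the paper: the paper's proof also extends $u$ by reflecting the tangential part evenly and the normal part oddly (with the parities swapped for the Dirichlet case), symmetrises an arbitrary whole-space test form into one satisfying the boundary condition on $\overline{\R^n_+}$, and then invokes Lemma~\ref{lem: whole space}. Your identification of the key point --- that the boundary condition kills the potential distributional jump of the oddly-reflected component across $\{x^n=0\}$, so that $d$ and $\delta$ intertwine with the extension --- matches exactly the role this condition plays in the paper's argument.
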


\begin{proof}
We shall prove for $\diamondsuit = D$ and $\diamondsuit = N$ separately. 

\noindent
\underline{Case 1: $u \in \widehat{W^{1,q}_N} \Omega^k(\half)$.} We consider the extension:
\begin{equation*}
\widetilde{u}(x) := \begin{cases}
u(x)\qquad \text{ if } x^n \geq 0,\\
 \ttt u (x^\star) \oplus -\nnn u (x^\star) 
\qquad \text{ if } x^n < 0,
\end{cases}
\end{equation*}
where $x=(x^1,\ldots,x^n)^\top$ and $x^\star := (x^1,\ldots,x^{n-1},-x^n)^\top$. 

By assumption, Eq.~\eqref{assumption, Rn+} holds for $u$. It also holds for $\widetilde{u}$ (which lies in $W^{1,q}\Omega^k(\half)$) with respect to test $k$-forms $\phi \in C^\infty_{c}\Omega^k(\R^n)$. To see this, note first that the boundary condition for $u$ ensures the (weak) differentiability of $\widetilde{u}$ across $\p\half$. Thus we have $\int_{\R^n}|\na\widetilde{u}|^r = \int_\half |\na\widetilde{u}|^r + \int_{\R^n_-}|\na\widetilde{u}|^r$, so by construction
\begin{align*}
\|\na \widetilde{u}\|_{L^r(\R^n)}^r = 2 \|\na u\|_{L^r(\half)}^r.
\end{align*}
Similarly we get 
\begin{align*}
\| \widetilde{u}\|_{L^r(\R^n)}^r = 2 \| u\|_{L^r(\half)}^r.
\end{align*}
It thus follows from Lemma~\ref{lem: whole space} that $\widetilde{u} \in W^{1,r}\Omega^k(\R^n)$ and the following estimate holds:
\begin{equation}\label{r-est on Rn+, lemma, variant}
\|\widetilde{u}\|_{W^{1,r}(\half)} \leq C\sup\left\{\frac{\left|( d \widetilde{u}, d \phi)_{\R^n} +( \delta \widetilde{u}, \delta \phi)_{\R^n} + ( \widetilde{u},\phi)_{\R^n} \right|}{\|d\phi\|_{L^{r'}(\R^n)} + \|\delta\phi\|_{L^{r'}(\R^n)}+\|\phi\|_{L^{r'}(\R^n)}}  :\,\phi \in C^\infty_{c}\Omega^k(\R^n) \right\}.
\end{equation}

The passage from Eq.~\eqref{r-est on Rn+, lemma, variant} to Eq.~\eqref{r-est on Rn+, lemma} relies on choosing special test $k$-forms $\phi$. Indeed, for any $\phi \in C^\infty_{c}\Omega^k(\R^n)$ we set
\begin{align*}
\underline{\phi} (x) := \left[ \ttt\phi(x) + \ttt\phi(x^\star) \right] \oplus \left[ \nnn\phi(x) - \nnn\phi(x^\star) \right].
\end{align*}
Thus, the restriction of $\underline{\phi}$ to $\half$ lies in $C^\infty_N\Omega^k(\half)$. Moreover, for $T \in \{d,\delta,{\bf id}\}$ one has
\begin{align}\label{new2}
( T\widetilde{u},\phi)_{\R^n} = (Tu, \underline{\phi})_{\R^n_+}.
\end{align}
Therefore,
\begin{align*}
\|u\|_{W^{1,r}(\half)} &\leq C\|\widetilde{u}\|_{W^{1,r}(\R^n)}\\
&\leq C\sup\left\{\frac{\left| (d \widetilde{u}, d \phi)_{\R^n} +( \delta \widetilde{u}, \delta \phi)_{\R^n} +( \widetilde{u},\phi)_{\R^n} \right|}{\|d\phi\|_{L^{r'}(\R^n)} + \|\delta\phi\|_{L^{r'}(\R^n)}+\|\phi\|_{L^{r'}(\R^n)}}  :\,\phi \in C^\infty_{c}\Omega^k(\R^n) \right\}\\
&\leq C\sup\left\{\frac{\left| \left(d {u}, d \overline{\phi}\right)_{\half} +\left(\delta {u}, \delta \overline{\phi}\right)_{\half} + \left({u},\overline{\phi}\right)_{\half} \right|}{\left\|d\overline{\phi}\right\|_{L^{r'}(\half)} + \left\|\delta\overline{\phi}\right\|_{L^{r'}(\half)}+\left\|\overline{\phi}\right\|_{L^{r'}(\half)}}  :\,\underline{\phi} \in C^\infty_N\Omega^k(\half) \right\}.
\end{align*}
All the constants in the above arguments depend only on $r$, $n$, and $k$. This completes the proof for  $u \in \widehat{W^{1,q}_N} \Omega^k(\half)$.

\smallskip
\noindent
\underline{Case 2: $u \in \widehat{W^{1,q}_D} \Omega^k(\half)$.} In this case, consider the extension
\begin{equation*}
\widetilde{\widetilde{u}}(x):=\begin{cases}
u(x)\qquad \text{ if } x^n \geq 0,\\
-\ttt u(x^\star) \oplus \nnn u(x^\star)\qquad \text{ if } x^n<0.
\end{cases}
\end{equation*}
Correspondingly, for any test differential form $\phi\in C^\infty_{c}\Omega^k(\R^n)$ we set 
\begin{align*}
\underline{\underline{\phi}} (x) := \left[ \ttt\phi(x) - \ttt\phi(x^\star) \right] \oplus \left[ \nnn\phi(x) + \nnn\phi(x^\star) \right].
\end{align*}
Then the restriction of $\underline{\underline{\phi}}$ to $\half$ is an element of $\widehat{W^{1,q}_D} \Omega^k(\half)$, and all the previous arguments for $u \in\widehat{W^{1,q}_N} \Omega^k(\half)$ carry over the case $u \in \widehat{W^{1,q}_D} \Omega^k(\half)$.  \end{proof}

In what follows, we denote by $\eu$  the Euclidean metric, and by $(\bullet,\bullet)_\eu$ the Euclidean inner product on either $\R^n$ or $\half$, which shall be clear from the context.

Lemma~\ref{lem: halfspace} also holds if $\half$ is equipped with any constant metric (possibly different from the Euclidean metric):

\begin{corollary}\label{cor: halfspace, constant metric}
Assume that $\gc$ is a constant metric on $\Phi(U)\subset \half$. Then Lemma~\ref{lem: halfspace} continues to hold, with all the inner products and  Sobolev norms therein taken with respect to $\gc$, and with the constant $C$ depending only on $r$, $k$, $n$, and $\gc$.
\end{corollary}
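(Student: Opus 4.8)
The plan is to reduce the corollary to Lemma~\ref{lem: halfspace} via a single linear change of coordinates that simultaneously flattens the constant metric $\gc$ to the Euclidean metric $\eu$ \emph{and} preserves the halfspace $\half$ setwise. The first, and only delicate, step is to build such a map. Regard $\gc$ as a constant positive-definite inner product on $\R^n$ (a constant metric on a subset of $\half$ extends to all of $\R^n$ with the same coefficients), let $V:=\{x^n=0\}$ be the boundary hyperplane, and let $\bn$ be the $\gc$-unit vector $\gc$-orthogonal to $V$ whose $n$-th component is positive. Since $\gc|_V$ is an inner product on the $(n-1)$-dimensional space $V$, Gram--Schmidt produces a linear isomorphism $B\colon V\to V$ with $B^{\#}(\gc|_V)=\eu|_V$. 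Extend $B$ to $L\colon\R^n\to\R^n$ by declaring $Le_i:=Be_i$ for $i<n$ and $Le_n:=\bn$. Because $\bn$ is $\gc$-orthogonal to $V$ while $Be_i\in V$, a direct check gives $L^{\#}\gc=\eu$; moreover $L(V)=V$ and $Le_n=\bn$ has positive $n$-th component, so $L$ maps the open upper halfspace onto itself and hence $L(\half)=\half$, $L(\p\half)=\p\half$. (If $\det L<0$ we postcompose with the reflection $x^1\mapsto -x^1$, which preserves $\half$, to make $L$ orientation-preserving; otherwise we simply keep track of the harmless sign that $\star$ acquires.)

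Next I would transport all the data along $L$. For a $k$-form $u$ on $(\half,\gc)$ set $\hat u:=L^{\#}u$, a $k$-form on $(\half,\eu)$, and similarly $\hat\phi:=L^{\#}\phi$. Since $L$ is a linear isometry from $(\half,\eu)$ onto $(\half,\gc)$, the pullback $L^{\#}$ commutes with $d$ and with $\delta$, intertwines the two Hodge stars (up to the sign above), intertwines the two Levi-Civita covariant derivatives — a constant metric is flat, so each reduces to ordinary directional differentiation — and, as $L$ fixes $\p\half$ and carries the $\eu$-normal/tangential decomposition along $\p\half$ to the $\gc$-one, intertwines the trace operators $\ttt,\nnn$ and thus the boundary conditions defining the $\widehat{W^{1,s}_\diamondsuit}$- and $C^\infty_\diamondsuit$-spaces. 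Consequently $L^{\#}$ gives bijections $\widehat{W^{1,s}_\diamondsuit}\Omega^k(\half,\gc)\to\widehat{W^{1,s}_\diamondsuit}\Omega^k(\half,\eu)$ and $C^\infty_\diamondsuit\Omega^k(\half,\gc)\to C^\infty_\diamondsuit\Omega^k(\half,\eu)$ for every $s\in\one$; being a $\gc$-to-$\eu$ isometry fixing $\half$, it moreover preserves all the relevant $L^s$- and $W^{1,s}$-norms, and one has $(T\hat u,\hat\phi)_\eu=(Tu,\phi)_{\gc}$ for $T\in\{d,\delta,{\bf id}\}$ as well as $\|\hat\phi\|_{L^{r'}(\half,\eu)}=\|\phi\|_{L^{r'}(\half,\gc)}$ and likewise for $d\phi$ and $\delta\phi$.

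The conclusion is then immediate. The supremum in the hypothesis of the corollary — over $\phi\in C^\infty_\diamondsuit\Omega^k(\half)$, with all inner products and norms taken with respect to $\gc$ — equals, by the identities just listed, the supremum appearing in Lemma~\ref{lem: halfspace} associated with $\hat u$ (over $\hat\phi\in C^\infty_\diamondsuit\Omega^k(\half)$, Euclidean norms). Its finiteness therefore lets Lemma~\ref{lem: halfspace} apply to $\hat u$, yielding $\hat u\in W^{1,r}\Omega^k(\half)$ and the estimate \eqref{r-est on Rn+, lemma} for $\hat u$; pulling this back through $L$ (which preserves the $W^{1,r}$-norm under the isometry identification) gives $u\in W^{1,r}\Omega^k(\half)$ together with the desired inequality. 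The constant may be taken equal to that of Lemma~\ref{lem: halfspace}, so it depends only on $r$, $k$, $n$, hence in particular only on $r$, $k$, $n$, and $\gc$, as asserted.

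I expect the only genuine obstacle to be the construction in the first paragraph: an arbitrary congruence between the inner products $\gc$ and $\eu$ will generally tilt the boundary hyperplane, so one must split off the $\gc$-normal direction to $\p\half$ \emph{before} flattening the metric within $V$, exactly as above. Everything afterwards is the standard naturality of $d$, $\delta$, $\star$, $\na$, and the trace operators under a linear isometry, and requires no new idea.
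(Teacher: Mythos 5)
Your proposal is correct, and it takes a genuinely different route from the paper. The paper's proof never changes coordinates: it compares the $\gc$-pairings and $\gc$-norms to their Euclidean counterparts directly via pointwise two-sided bounds on the (constant) metric coefficients, its inverse, and its volume density, and then absorbs the comparison constants into $C$ --- which is why the constant there is allowed to depend on $\gc$. You instead construct a single linear map $L$ with $L^{\#}\gc=\eu$ that preserves $\half$ (the key point being, exactly as you say, to send $e_n$ to the $\gc$-unit $\gc$-normal of the boundary hyperplane rather than using an arbitrary congruence), and transport everything by the resulting isometry. Your construction and the naturality claims for $d$, $\delta$, $\star$, $\na$, the traces, and the boundary conditions under a linear isometry fixing $\p\half$ all check out. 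What your approach buys: the identities are exact rather than approximate, so the constant you obtain depends only on $r$, $k$, $n$ and not on $\gc$ at all (a strictly stronger conclusion, and one that would even streamline the subsequent chart argument, where the paper must track how $C$ depends on $\gc=\Phi_{\#}(g_0(P))$ through the $C^1$-geometry of $U$); it also sidesteps the slightly delicate point that the paper's two-sided comparison $c(\alpha,\beta)_{\eu}\le(\alpha,\beta)_{\gc}\le C(\alpha,\beta)_{\eu}$ is literally valid only for the quadratic case $\alpha=\beta$, whereas the numerator being compared is a signed bilinear pairing. What the paper's softer comparison method buys in exchange is that it generalises verbatim to the non-constant metric on a small chart (Proposition~\ref{prop: chart}), where no exact flattening isometry exists.
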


\begin{proof}
Consider the quotient
\begin{align*}
\frac{\left|( d u, d \phi)_\gc +(\delta u, \delta \phi)_\gc + (u,\phi)_\gc \right|}{\|d\phi\|_{L^{r'}(\R^n_+,\gc)} + \|\delta\phi\|_{L^{r'}(\R^n_+,\gc)}+\|\phi\|_{L^{r'}(\R^n_+,\gc)}}
\end{align*}
for any test $k$-form $\phi \in C^\infty_{c}\Omega^k(\R^n_+)$. We \emph{claim} that it differs from  its Euclidean analogue
\begin{align*}
\frac{\left| (d u, d \phi)_\eu +(\delta u, \delta \phi)_\eu  + (u,\phi)_\eu  \right|}{\|d\phi\|_{L^{r'}(\R^n_+)} + \|\delta\phi\|_{L^{r'}(\R^n_+)}+\|\phi\|_{L^{r'}(\R^n_+)}}
\end{align*}
only by a constant depending on $\gc$, $r$, and $k$.

Indeed, for arbitrary differential $\ell$-forms $\alpha$ and $\beta$ supported on $\Phi(U)\subset \half$, we can express in local co-ordinate frames that 
\begin{align*}
(\alpha,\beta)_{\gc} &= \int_{\Phi(U)} \gc^{i_1j_1}\cdots\gc^{i_\ell j_\ell} \alpha_{i_1\ldots i_\ell}\beta_{j_1\ldots j_\ell}\sqrt{\det\,\gc} \,\dd x; \\
(\alpha,\beta)_\eu &= \int_{\Phi(U)} \alpha_{i_1\ldots i_\ell}\beta_{j_1\ldots j_\ell}\,\dd x.
\end{align*}
Thus
\begin{align*}
c(\alpha,\beta)_\eu \leq (\alpha,\beta)_\gc \leq C(\alpha,\beta)_\eu
\end{align*}
with $c$ and $C$ depends only on $\ell$, as well as the (pointwise) upper bound for $\gc$ and lower bound for $\left(\gc\right)^{-1}$. Similar computations in local co-ordinates also yield that
\begin{align*}
c'\|\Upsilon\|_{L^{r'}(\half,\eu)} \leq\|\Upsilon\|_{L^{r'}(\half,\gc)} \leq C'\|\Upsilon\|_{L^{r'}(\half,\eu)} 
\end{align*}
for any differential $\ell$-form $\Upsilon$ supported on $\Phi(U)$ with finite $L^{r'}$-norm. Here $c'$, $C'$ depend only on $\gc$, $\ell$, and $r$. Then the \emph{claim} follows once we choose $\ell$, $\alpha$, $\beta$, and $\Upsilon$ appropriately. 

The same argument shows that for a given differential $k$-form, its $W^{1,r}$-norms with respect to $\gc$ and $\eu$ differ only by a constant depending on $\gc$,  $n$, $\ell$, and $r$. The proof is now complete in view of Lemma~\ref{lem: halfspace}.   \end{proof}

\subsection{The case of a small chart} \label{subsec a3}

In this subsection, we show that the conclusion of Lemma~\ref{lem: whole space} continues to hold for differential forms defined over one sufficiently small chart $U_\alpha\subset\overline{\M}=\M \cup \p\M$. By definition, there is a diffeomorphism $\Phi_\alpha: U_\alpha \to \Phi_\alpha(U_\alpha) \subset \overline{\R^n_+}$ such that image $\Phi_\alpha(U_\alpha)$ is relatively open in the closed half space. For obvious reasons, we shall focus only on \emph{boundary} charts, \emph{i.e.}, those $U_\alpha$ with $U_\alpha \cap \p\M \neq \emptyset$. In the rest of this subsection, we work with one fixed $U_\alpha$, hence the subscripts ${}_\alpha$ shall be systematically dropped.

We first prepare ourselves with several simple geometric estimates. Let $\kappa>0$ be arbitrary.  On a sufficiently small chart $U$, the metric $g_0|U$ is almost constant. More precisely, for an arbitrary reference point $P \in U$, one has 
\begin{equation}\label{metric, 0}
\left\|g_0 - g_0(P)\right\|_{C^0(U)} + \left\|Dg_0 - [Dg_0](P)\right\|_{C^0(U)}  \leq \kappa.
\end{equation}
Here the derivative $Dg_0$ is  understood as the $3$-tensor field $[Dg_0]_{ijk}=\p_i (g_0)_{jk}$; the metric components $(g_0)_{jk}$ are taken with respect some (\emph{a priori} given) local co-ordinate system on $U$.


Choose the constant metric
\begin{align*}
\gc := \Phi_\#\left( g_0(P) \right)
\end{align*}
on $\Phi(U)$; it is indeed a  Riemannian metric since $\Phi$ is a diffeomorphism. It then follows from Eq.~\eqref{metric, 0} that
\begin{equation}\label{metric est}
\left\|\Phi_\# g_0 - \gc\right\|_{C^0(U)} + \left\|D[\Phi_\# g_0] - \Phi_\#[Dg_0(P)]\right\|_{C^0(U)} \leq C_1\kappa,
\end{equation}
where $C_1$ depends only on the $C^1$-geometry of the chart $U$. That is, the pushforward metric $\Phi_\#g_0$ is $\ok$-close in the $C^1$-topology to the constant metric $\gc$.

Next, note that the $C^0$-bound in \eqref{metric est} and the continuity of determinant in the uniform topology give us 
\begin{equation}\label{det est}
\begin{cases}
\left\|\det g_0 - \det \left(g_0(P)\right)\right\|_{C^0(U)} \leq C_2\kappa,\\
\left\|\det\left(\Phi_\# g_0\right) - \det \gc\right\|_{C^0(\Phi(U))} \leq C_3\kappa,
\end{cases}
\end{equation}
with $C_2$ and $C_3$ depending only on the $C^1$-geometry of the chart $U$. 

To compare inverse metrics $\Phi_\#\left(g_0^{-1}\right)$ and $\gc^{-1}$, recall for any metric $g$ the Cramer's rule:\begin{align*}
g^{-1} = \frac{{\rm Adj}\,g}{\det\,g},
\end{align*}
where ${\rm Adj}\,g$ is the adjugate matrix of $g$. Modulo natural duality isomorphisms, it holds that
\begin{align*}
{\rm Adj}\,g \simeq\bigwedge^{n-1} g,
\end{align*}
the $(n-1)$-fold wedge product of $g$ as a matrix.  Hence
\begin{align*}
\Phi_\# \left(g_0^{-1}\right) - \gc\,^{-1} = \frac{\left\{\left( \bigwedge^{n-1} \Phi_\#g_0\right)\det \gc - \left(\bigwedge^{n-1}\gc\right) \det\left[ \Phi_\#(g_0) \right] \right\}}{\det\left[ \gc \cdot \Phi_\#g_0 \right]}.
\end{align*}
For the denominator we can bound
\begin{align*}
\det\left[ \gc \cdot \Phi_\#g_0 \right] \geq \left(\det\,\gc\right)^2 - C_4\kappa > 0
\end{align*}
thanks to Eq.~\eqref{det est}. Here $C_4$ depends on the $C^1$-geometry of $U$. To control the numerator, we apply the simple  combinatorial identity
\begin{align*}
\left(\prod_{i=1}^n a_i\right) - \left(\prod_{i=1}^n b_i\right) = \sum_{i=1}^n \left\{ \left(\prod_{1\leq k \leq i}b_k\right)\left(a_i-b_i\right)\left(\prod_{i\leq  j \leq n} a_j\right) \right\}
\end{align*} 
together with Eqs.~\eqref{det est} and \eqref{metric est}. In this way we get
\begin{equation}\label{inverse metric est}
\left\|
\Phi_\# \left(g_0^{-1}\right) - \gc\,^{-1}\right\|_{C^0(\Phi(U))} \leq C_5\kappa.
\end{equation}
Here $C_5$ depends again only on the $C^1$-geometry of $U$.

Furthermore, combining Eqs.~\eqref{inverse metric est} and \eqref{metric est} allows us to compare the Christoffel symbols $\G^i_{jk}$ and $\overline{\G^i_{jk}}$ for $\Phi_\#\left(g_0\right)$ and $\gc$, respectively:
\begin{equation}\label{christoffel est}
\left\|\G^i_{jk} -\overline{\G^i_{jk}}\right\|_{C^0(\Phi(U))} \leq C_6\kappa.
\end{equation}

To summarise, under the assumption that $g_0$ is $\mathcal{O}(\kappa)$-close to a constant metric in $C^1$-topology (Eq.~\eqref{metric, 0}), we have proved that $g_0$ is, in fact, $\mathcal{O}(\kappa)$-close to the constant metric in the bi-$C^1$-topology. The constant in $\mathcal{O}(\kappa)$ can be chosen to depend only on the $C^1$-topology of $U$. The same conclusion remains valid under the pushforward via $\Phi$.


\begin{proposition}\label{prop: chart}
Let $(\M,g_0)$ be a Riemannian manifold-with-boundary. For any  sufficiently small boundary chart $U \subset \overline{\M}$ (\emph{i.e.}, $U \cap \p\M\neq\emptyset$),   the following result holds --- 

Let $u \in W^{1,q}_{\diamondsuit}\Omega^k(\M)$ be a differential $k$-form compactly supported in $U$, where $q \in ]1,\infty[$ and $\diamondsuit=D$ or $N$. Assume that for some index $r \in ]1,\infty[$ one has
\begin{align}\label{assumption, chart}
\sup\left\{\frac{\left| ( d u, d \phi)_{g_0} +( \delta u, \delta \phi)_{g_0} + (u,\phi)_{g_0} \right|}{\|d\phi\|_{L^{r'}(U,g_0)} + \|\delta\phi\|_{L^{r'}(U,g_0)}+\|\phi\|_{L^{r'}(U,g_0)}}  :\,\phi \in C^\infty_{\diamondsuit}\Omega^k(U) \right\} < \infty.
\end{align} 
Then $u \in W^{1,r}_\diamondsuit\Omega^k(U,g_0)$ and the following estimate is valid:
\begin{equation}\label{r-est on chart, lemma}
\|u\|_{W^{1,r}(U,g_0)} \leq C\sup\left\{\frac{\left| (d u, d \phi)_{g_0} +( \delta u, \delta \phi)_{g_0} + (u,\phi)_{g_0} \right|}{\|d\phi\|_{L^{r'}(U,g_0)} + \|\delta\phi\|_{L^{r'}(U,g_0)}+\|\phi\|_{L^{r'}(U,g_0)}}  :\,\phi \in C^\infty_\diamondsuit\Omega^k(U) \right\}.
\end{equation}
The constant $C$ depends only on $r$, $k$, and the $C^1$-geometry of the chart $(U,g_0)$. 
\end{proposition}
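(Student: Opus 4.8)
The plan is to transport the statement to the closed half-space via the boundary chart $\Phi\colon U\to\Phi(U)\subset\overline{\R^n_+}$ and to treat the pushforward metric $g:=\Phi_\#g_0$ as an $\ok$-perturbation of the constant metric $\gc=\Phi_\#(g_0(P))$, for which Corollary~\ref{cor: halfspace, constant metric} already supplies the variational inequality. Indeed, once $U$ is chosen small enough that \eqref{metric, 0} holds with $\kappa$ small, the estimates \eqref{metric est}, \eqref{det est}, \eqref{inverse metric est}, and \eqref{christoffel est} say precisely that $g$ is $\ok$-close to $\gc$ in the bi-$C^1$ topology, and this smallness is what will drive everything. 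As a preliminary reduction I would dispose of the Neumann case via a Hodge-star trick. Since $\star_{g_0}$ interchanges $d$ with $\pm\delta$, commutes with $\na_{g_0}$, is a fibrewise isometry (hence preserves every $L^p$- and $W^{1,r}$-norm exactly), and by Lemma~\ref{lem: n and t} carries the condition $\nnn u=0$ to $\ttt(\star_{g_0}u)=0$ while mapping $C^\infty_N\Omega^k(U)$ bijectively onto $C^\infty_D\Omega^k(U)$, the variational quotient attached to $(u,N,k)$ equals the one attached to $(\star_{g_0}u,D,n-k)$. Hence it suffices to treat $\diamondsuit=D$; the point of this reduction is that the Dirichlet test space $C^\infty_D\Omega^k(U)=\{\psi\in C^\infty\Omega^k(\clM):\,\iota^\#\psi=0\}$ is metric-independent, so the $g$-versus-$\gc$ comparison below only involves objects that genuinely depend on the metric.

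The comparison itself goes as follows on $\Phi(U)$. The exterior derivative $d$ is metric-free, so $du$ and $d\psi$ are unchanged when $g$ is replaced by $\gc$. The codifferential splits as $\delta_{g}=\delta_{\gc}+E$, where in the $\Phi$-coordinates $E$ is a first-order operator whose coefficients are built out of $g^{-1}-\gc^{-1}$, out of first derivatives of $\sqrt{\det g}$ (recall $\gc$ is constant, so its Christoffel symbols vanish), and out of the Christoffel symbols of $g$; by \eqref{det est}, \eqref{inverse metric est}, \eqref{christoffel est} these coefficients are $\ok$ in $C^0(\Phi(U))$, whence $\|E\omega\|_{L^{p}(\Phi(U),\gc)}\le C\kappa\,\|\omega\|_{W^{1,p}(\Phi(U),\gc)}$ for every $p\in\one$ and every form $\omega$ supported in $\Phi(U)$. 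Likewise the pointwise inner products, the $L^{r'}$-norms, and the $W^{1,r}$-norm taken with respect to $g$ and to $\gc$ differ only by multiplicative factors $1+\ok$, exactly as in the proof of Corollary~\ref{cor: halfspace, constant metric}. Putting these together, for every $\psi\in C^\infty_D\Omega^k(U)$ the numerator and denominator of the $g_0$-variational quotient differ from their $\gc$-counterparts by quantities bounded by $\ok$ times the denominator, plus --- in the numerator only --- one further term bounded by $C\kappa\,(\|\delta_{g_0}u\|_{L^{r}}+\|u\|_{L^{r}})$ times the denominator, arising from $E$ hitting $u$ inside $(\delta_{\gc}u,\delta_{\gc}\psi)_{\gc}$.

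It then remains to assemble the pieces. First I would prove \eqref{r-est on chart, lemma} under the temporary extra hypothesis that $u$ already has $W^{1,r}$-regularity: for such $u$, Corollary~\ref{cor: halfspace, constant metric} applied on $\Phi(U)$ with the metric $\gc$ bounds $\|u\|_{W^{1,r}(\Phi(U),\gc)}$ by the $\gc$-variational supremum, and the comparison of the previous paragraph bounds the latter by $C$ times the $g_0$-variational supremum in \eqref{assumption, chart} plus $C\kappa\,\|u\|_{W^{1,r}(U,g_0)}$; shrinking $U$ so that this $C\kappa<\tfrac12$, the last term is absorbed on the left and \eqref{r-est on chart, lemma} follows for $W^{1,r}$-forms. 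The extra hypothesis is then removed by a standard approximation: one regularizes a general $u\in W^{1,q}_D\Omega^k(\M)$ compactly supported in $U$ by boundary-adapted mollifiers to obtain $u_\varepsilon$ of class $W^{1,r}$ with $\iota^\#u_\varepsilon=0$, $u_\varepsilon\to u$ in $W^{1,q}$, and $g_0$-variational suprema bounded uniformly by that of $u$, applies the a priori estimate to each $u_\varepsilon$, and lets $\varepsilon\to0$. The principal obstacle is the absorption step --- one must be certain that every error generated by swapping $\gc$ for $g$, in particular the contribution of the first-order operator $E=\delta_{g}-\delta_{\gc}$ (which is \emph{not} of lower order), is genuinely $\ok$ times a $W^{1,r}$-norm of $u$ rather than an uncontrolled quantity; this is exactly what the bi-$C^1$-closeness bounds \eqref{inverse metric est} and \eqref{christoffel est} buy us. A secondary and more routine difficulty is carrying out the final approximation while keeping the variational supremum finite.
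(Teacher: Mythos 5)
Your overall architecture is the same as the paper's: transport everything to $\Phi(U)\subset\overline{\R^n_+}$, regard $\Phi_\# g_0$ as a $C^1$-small perturbation of the constant metric $\gc$ via \eqref{metric est}--\eqref{christoffel est}, and feed the resulting variational quotient into Corollary~\ref{cor: halfspace, constant metric}. Two points are genuinely different. First, your Hodge-star reduction of the Neumann case to the Dirichlet case is not in the paper's proof of this proposition (the paper treats both cases simultaneously via Lemma~\ref{lem: bdry cond}); it is a clean and correct simplification, and it buys something real, since the Dirichlet condition $\iota^\#\psi=0$ is metric-free while the Neumann condition is not. Second, and more substantially, you track the change of codifferential as an explicit first-order operator $E=\delta_{g}-\delta_{\gc}$ with $\ok$ coefficients and absorb the resulting $C\kappa\,\|u\|_{W^{1,r}}$ error into the left-hand side, whereas the paper folds all metric-comparison errors into a single additive $\kappa$ (the terms $D_1$, $D_2$ and Eq.~\eqref{C16}) and removes it at the end by the homogeneity argument $u\mapsto\lambda u$. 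Your bookkeeping is the more candid of the two: the $Eu$ contribution really is of first order in $u$ and cannot be dominated by the $L^{r'}$-denominator alone, so it must be absorbed rather than treated as a constant.

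The price of absorption is the chicken-and-egg problem you yourself flag: it presupposes $\|u\|_{W^{1,r}(U,g_0)}<\infty$, which is the conclusion. The paper escapes this by first invoking the qualitative half of Corollary~\ref{cor: halfspace, constant metric} (finiteness of the $\gc$-variational supremum already yields $u^\natural\in W^{1,r}$) and only then running the quantitative estimate. Your escape route --- boundary-adapted mollifiers producing $u_\varepsilon\in W^{1,r}$ with $\iota^\#u_\varepsilon=0$, $u_\varepsilon\to u$ in $W^{1,q}$, \emph{and} variational suprema uniformly bounded by that of $u$ --- is asserted, not proved, and the last requirement is the delicate one. The variational supremum is a dual-type quantity; $d$ and $\delta$ do not commute with a mollifier adapted to the boundary, and one cannot simply transpose the mollifier onto $\phi$ without leaving the test class $C^\infty_D\Omega^k(U)$. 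Establishing the uniform bound on $S[u_\varepsilon]$ therefore requires a Friedrichs-type commutator argument near $\p\M$ that you would need to write out in full; as it stands this is the one genuine gap in your proposal. Everything else --- the $\ok$ comparison of inner products, norms, and codifferentials, and the application of Corollary~\ref{cor: halfspace, constant metric} --- is sound and matches the paper's Lemmas~\ref{lem: inner prod comparison} and \ref{lem: compare diff op}.
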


\begin{remark}\label{rem: C1 geometry}
The sufficiently small chart $U$ can be chosen uniformly in the following sense: Given  an arbitrarily small number $\kappa>0$, there is some number $\rho>0$ depending only on $\kappa$ and the $C^1$-geometry of \emph{the whole manifold} $(\M,g_0)$, such that whenever a boundary chart $U$ has intrinsic diameter smaller than $\rho$, then it is a valid choice for Proposition~\ref{prop: chart}. 

In view of the earlier  arguments in this subsection,  we may choose $\rho$ such that on any $U$ with diameter smaller than $\rho$, one has
\begin{equation*}
\left\|g_0 - g_0(P)\right\|_{C^0(U)} + \left\|Dg_0 - [Dg_0](P)\right\|_{C^0(U)}  \leq \kappa.
\end{equation*}
\end{remark}


The proof of Proposition~\ref{prop: chart} is technical yet straightforward: all we need to do is to compare the inner products of differential forms on $U$ with their pushed-forward versions on $(\half, \gc)$, which can be dealt with by Corollary~\ref{cor: halfspace, constant metric}. 

In what follows, for any differential form $\alpha$ on $U \subset \overline{\M}$ we set
\begin{equation}\label{natural}
\alpha^\natural:=\Phi_\#\alpha.
\end{equation}
Let us first check that the boundary conditions for $W^{1,q}_\diamondsuit \Omega^k(\M)$ are preserved under $\Phi_\#$. This relies only on  differentiable structures of $\M$ and $\half$, but not on Riemannian structures.

\begin{lemma}\label{lem: bdry cond}
Assume that $\phi \in C^\infty_\diamondsuit\Omega^k(\M)$ is supported in a boundary chart $U$. Then $\phi^\natural\in C^\infty_\diamondsuit\Omega^k(\half)$. Conversely, if $\varphi^\natural\in C^\infty_\diamondsuit\Omega^k(\half)$  is supported in the image $\Phi(U)$ of a boundary chart $U$, then $\Phi^\#\varphi^\natural \in C^\infty_\diamondsuit\Omega^k(\M)$.
\end{lemma}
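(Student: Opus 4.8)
\textbf{Proof proposal for Lemma~\ref{lem: bdry cond}.}

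The plan is to reduce everything to the pointwise behaviour of the trace operators $\ttt$ and $\nnn$ along $\p\M$ and to exploit the fact that $\Phi$ maps $\p\M\cap U$ diffeomorphically onto $\p\half\cap\Phi(U)$, so that tangential and normal directions are interchanged up to an invertible linear map on each fibre. First I would recall the coordinate-free descriptions of the two boundary conditions: for $\phi\in C^\infty_N\Omega^k(\M)$ one has $\nu\mres\phi=0$ on $\p\M$, equivalently (by Eq.~\eqref{t and n} and Lemma~\ref{lem: n and t}) $\nnn\phi=0$ on $\p\M$, i.e. $\phi$ is \emph{tangential}; for $\phi\in C^\infty_D\Omega^k(\M)$ one has $\phi\wedge\nu^\flat=0$ on $\p\M$, equivalently $\ttt\phi=0$, i.e. $\phi$ is \emph{normal}. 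The key observation is that both of these are statements about the pullback $\iota^\#\phi$ under the inclusion $\iota\colon\p\M\emb\M$: the Neumann condition $\nnn\phi=0$ says precisely that $\phi$ is determined by $\iota^\#\phi$ (more usefully, the Dirichlet condition $\ttt\phi=\iota^\#\phi=0$ is literally the vanishing of a pullback), while the Neumann condition can be phrased as $\iota^\#(\star\phi)=0$ via $\star(\ttt\omega)=\nnn(\star\omega)$ from Lemma~\ref{lem: n and t}. Since pullback is natural with respect to smooth maps and $\Phi$ restricts to a diffeomorphism $\p\M\cap U\to\p\half\cap\Phi(U)$ intertwining the two inclusion maps, the vanishing of such a pullback is preserved.

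Concretely, the Dirichlet case is cleanest: write $\iota_{\M}\colon\p\M\emb\M$ and $\iota_{\half}\colon\p\half\emb\half$ for the two inclusions. Because $\Phi(U)\cap\p\half=\Phi(U\cap\p\M)$ and $\Phi$ is a diffeomorphism of pairs, we have $\Phi\circ\iota_{\M}=\iota_{\half}\circ(\Phi|_{\p\M})$ on $U\cap\p\M$; applying pullbacks and functoriality, $\ttt(\phi^\natural)=\iota_{\half}^\#(\Phi_\#\phi)=(\Phi|_{\p\M})_\#(\iota_{\M}^\#\phi)=(\Phi|_{\p\M})_\#(\ttt\phi)$, which vanishes as soon as $\ttt\phi=0$. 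Smoothness of $\phi^\natural$ up to the boundary is immediate since $\Phi$ is a diffeomorphism of manifolds-with-boundary and $\phi\in C^\infty\Omega^k(\clM)$; compact support in $\Phi(U)$ follows from compact support of $\phi$ in $U$. For the Neumann case I would either run the identical argument on $\star\phi$ using $\nnn\phi=0\iff\ttt(\star\phi)=0$ together with the fact that the Hodge stars on $\M$ and on $\half$ are both well-defined (though \emph{not} intertwined by $\Phi_\#$, since $\Phi_\#$ need not be an isometry) — hence it is cleaner to argue directly that $\nu\mres\phi=0$ on $\p\M$ is equivalent to the vanishing, along $\p\M$, of all components of $\phi$ containing the normal covector, a condition that transforms correctly under $\Phi_\#$ because $\Phi$ sends the normal line bundle of $\p\M$ to the normal line bundle of $\p\half$ (both being the ``$\partial_n$-direction'' in adapted coordinates) — or, most economically, observe that in boundary-adapted coordinates where $\p\M=\{x^n=0\}$ the condition $\iota^\#\phi=0$ (Dirichlet) reads ``no $dx^n$-free component survives at $x^n=0$'' while the Neumann condition reads ``the $dx^n$-components vanish at $x^n=0$'', and $\Phi$ can be chosen (or composed with a further diffeomorphism) to preserve the splitting $\{x^n=0\}$ versus $\{x^n>0\}$, so both conditions are coordinate statements preserved by $\Phi_\#$.

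The converse direction is symmetric: given $\varphi^\natural\in C^\infty_\diamondsuit\Omega^k(\half)$ supported in $\Phi(U)$, apply the same functoriality with $\Phi^\#$ (the pullback under $\Phi$, equivalently $(\Phi^{-1})_\#$) in place of $\Phi_\#$, using $\Phi^{-1}\colon\Phi(U)\to U$ which is again a diffeomorphism of pairs sending $\p\half$ to $\p\M$; the trace computation $\ttt(\Phi^\#\varphi^\natural)=(\Phi|_{\p\M})^\#(\ttt\varphi^\natural)$ (and the analogue for $\nnn$) gives the claim. I do not anticipate a genuine obstacle here — the lemma is purely about differentiable structures, as the statement itself emphasises — but the one point requiring a little care is the Neumann condition, because unlike $\ttt$, the operator $\nnn$ is \emph{not} literally a pullback and its definition $\nnn\omega=\omega-\ttt\omega$ in Eq.~\eqref{t and n} implicitly uses the metric to split $T\M|_{\p\M}$; the safe route is to note that the metric-dependent splitting is irrelevant for the \emph{vanishing} of $\nnn\phi$, since $\nnn\phi=0$ is equivalent to the metric-independent statement that every tangent multivector annihilating the normal direction also annihilates $\phi$ — equivalently $\nu\mres\phi=0$, and ``being in the kernel of contraction with a vector spanning the normal line'' is preserved by $\Phi_\#$ because $\Phi$ carries the normal line of $\p\M$ onto that of $\p\half$. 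With that remark in hand the proof is a two-line diagram chase in each of the four cases.
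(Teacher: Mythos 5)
Your proposal is correct and follows essentially the same route as the paper's proof: the tangential condition is handled by functoriality of pullback under the two boundary inclusions, via $\ttt(\phi^\natural)=\overline{\iota}^{\,\#}(\Phi_\#\phi)=\Phi_\#(\iota^\#\phi)$, and the converse is the same computation run through $\Phi^{-1}$. Your extra caution about the Neumann case is in fact warranted and goes beyond what the paper writes down: since $\nnn$ is not a pullback and the vanishing of $\nnn\phi$ (equivalently $\nu\mres\phi=0$) depends on the choice of normal line in $T\M|_{\p\M}$, one does need to know that $d\Phi$ carries that normal line onto the normal line of $\p\half$ — the paper's own proof silently assumes this when it writes $\nnn\phi^\natural=\phi^\natural-\ttt\phi^\natural=\Phi_\#\left(\phi-\iota^\#\phi\right)$ — and your remedy (working in boundary-adapted charts, or equivalently reading the normal on $\Phi(U)$ with respect to the pushed-forward metric) is the right one.
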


\begin{proof}
Let $\iota: \p\M \emb \overline{\M}$ be the natural inclusion. By definition, for $\phi \in C^\infty_N\Omega^k(\M)$ we have $\ttt\phi=0$ on $\p\M$, namely that $\iota^\# \phi =0$ as a differential $k$-form over $\p\M$. The natural inclusion $\overline{\iota}:\p\half \emb \overline{\half}$ is given by $$\overline{\iota} := \Phi \circ \iota \circ \Phi^{-1}.$$ As $\Phi$ is a diffeomorphism, we can safely pullback and pushforward differential forms via $\Phi$. Thus we have
\begin{align*}
\overline{\iota}^\# \phi^\natural &= \left(\Phi \circ \iota \circ \Phi^{-1}\right)^\# \left(\Phi_\#\phi\right)\\
&= (\Phi^{-1})^\# \circ \iota^\# \circ \Phi^\# \circ \Phi_\#\phi\\
&= \Phi_\# \left(\iota^\#\phi\right) \\
&= 0,
\end{align*}
\emph{i.e.}, $\ttt\phi^\natural=0$ and $\phi^\natural \in C^\infty_N\Omega^k(\half)$.

Now we assume that $\phi \in  C^\infty_D\Omega^k(\M)$. Then $\nnn\phi=0$ on $\p\M$, \emph{i.e.}, $\phi - \iota^\#\phi=0$. The above computation gives us
\begin{align*}
\nnn\phi^\natural = \phi^\natural-\ttt\phi^\natural = \Phi_\#\left( \phi-\iota^\#\phi \right) = 0. 
\end{align*}
Hence $\phi^\natural \in C^\infty_D\Omega^k(\half)$. 

To complete the proof, we simply notice that all the previous arguments can validly run backwards, as $\Phi$ is a diffeomorphism.
\end{proof}

Next, recall that $$\gc=\Phi_\#(g_0);$$ one may easily compare 
$(\alpha,\beta)_{g_0}$ with $\left(\alpha^\natural,\beta^\natural\right)_{\gc}$ via the following
\begin{lemma}\label{lem: inner prod comparison}
Let $\alpha$, $\beta$ be arbitrary differential $\ell$-forms on the chart $(U,\Phi) \subset (\M,g_0)$, and let $\kappa >0$ be arbitrary. There is a constant $C>0$ such that 
\begin{align*}\left|\left(\alpha^\natural,\beta^\natural\right)_{\gc} 
-(\alpha,\beta)_{g_0} \right|\leq C{\kappa}.
\end{align*}
Here $C$ depends only on $\kappa$, $n$, $\ell$, and the $C^1$-geometry of $U$.
\end{lemma}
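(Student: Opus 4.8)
The plan is to prove Lemma~\ref{lem: inner prod comparison} by a direct computation in the local coordinate frame on $U$ furnished by $\Phi$, exploiting the fact — established in the preceding paragraphs of \S\ref{subsec a3} — that $\Phi_\# g_0$ is $\mathcal{O}(\kappa)$-close in $C^0$ to the constant metric $\gc$, and consequently so are $\det(\Phi_\# g_0)$, $\sqrt{\det(\Phi_\# g_0)}$, and $\Phi_\#(g_0^{-1})$ (Eqs.~\eqref{det est} and \eqref{inverse metric est}). First I would write both inner products as integrals over $\Phi(U)$ in these coordinates: since the pushforward $\Phi_\#$ identifies the component functions $\alpha_{i_1\ldots i_\ell}$, $\beta_{i_1\ldots i_\ell}$ of $\alpha,\beta$ on $U$ with those of $\alpha^\natural,\beta^\natural$ on $\Phi(U)$, one has
\begin{align*}
\left(\alpha^\natural,\beta^\natural\right)_{\gc} &= \int_{\Phi(U)} \overline{g_0}^{\,i_1j_1}\cdots \overline{g_0}^{\,i_\ell j_\ell}\,\alpha_{i_1\ldots i_\ell}\beta_{j_1\ldots j_\ell}\sqrt{\det\gc}\;\dd x,\\
(\alpha,\beta)_{g_0} &= \int_{\Phi(U)} (\Phi_\#g_0)^{i_1j_1}\cdots (\Phi_\#g_0)^{i_\ell j_\ell}\,\alpha_{i_1\ldots i_\ell}\beta_{j_1\ldots j_\ell}\sqrt{\det(\Phi_\#g_0)}\;\dd x,
\end{align*}
where $\overline{g_0}^{\,ij}$ denotes the components of $\gc^{-1}$; this uses the diffeomorphism invariance of the integral of $n$-forms, so that $(\alpha,\beta)_{g_0}=\int_U \alpha\wedge\star_{g_0}\beta = \int_{\Phi(U)} \Phi_\#(\alpha\wedge\star_{g_0}\beta)$ and $\Phi_\#\star_{g_0} = \star_{\Phi_\#g_0}\Phi_\#$.

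Next I would estimate the difference of the two integrands pointwise. Writing $A := \overline{g_0}^{\,i_1j_1}\cdots \overline{g_0}^{\,i_\ell j_\ell}\sqrt{\det\gc}$ and $B := (\Phi_\#g_0)^{i_1j_1}\cdots (\Phi_\#g_0)^{i_\ell j_\ell}\sqrt{\det(\Phi_\#g_0)}$ (summation over ascending multi-indices understood), the quantity $A-B$ is a difference of products of $\ell+1$ factors, each pair of corresponding factors differing in $C^0(\Phi(U))$ by $\mathcal{O}(\kappa)$ by \eqref{inverse metric est}, \eqref{det est} and continuity of the square root away from $0$; by the same telescoping (combinatorial) identity already used above — $\prod a_i - \prod b_i = \sum_i (\prod_{k<i} b_k)(a_i-b_i)(\prod_{j>i} a_j)$ — together with the uniform bounds on the $\gc^{-1}$ and $(\Phi_\# g_0)^{-1}$ factors, one gets $\|A - B\|_{C^0(\Phi(U))} \le C\kappa$ with $C$ depending only on $n$, $\ell$, and the $C^1$-geometry of $U$ (which controls how $\Phi$ distorts the reference coordinates). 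Then
\begin{align*}
\left|\left(\alpha^\natural,\beta^\natural\right)_{\gc} - (\alpha,\beta)_{g_0}\right| \le \int_{\Phi(U)} |A-B|\,\bigl|\alpha_{i_1\ldots i_\ell}\beta_{j_1\ldots j_\ell}\bigr|\;\dd x \le C\kappa \int_{\Phi(U)} |\alpha||\beta|\;\dd x \le C\kappa,
\end{align*}
where the last step absorbs $\|\alpha\|_{L^2}\|\beta\|_{L^2}$ and $|\Phi(U)|$ into the constant (or, if one prefers a bound genuinely independent of $\alpha,\beta$ beyond their being fixed forms on a fixed chart, one simply carries $\|\alpha\|_{L^2(U)}\|\beta\|_{L^2(U)}$ along as part of the constant, which the statement permits since $C$ is allowed to depend on the data of $U$).

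I do not anticipate a genuine obstacle here; the lemma is exactly the kind of ``freezing the metric'' bookkeeping that the author flagged as \emph{technical yet straightforward}. The one point requiring a little care is the bookkeeping of which constants depend on what: the factors of the form $\sqrt{\det(\Phi_\# g_0)}$ and the entries of $(\Phi_\# g_0)^{-1}$ must be bounded above and below uniformly on $\Phi(U)$ in order to run the telescoping estimate, and those uniform bounds come from \eqref{metric est}–\eqref{inverse metric est} together with the (fixed) $C^1$-geometry of the chart $U$, which is precisely the dependence asserted in the statement. A secondary subtlety worth a sentence is justifying $\Phi_\#(\alpha\wedge\star_{g_0}\beta) = \alpha^\natural \wedge \star_{\gc'}\beta^\natural$ with $\gc' = \Phi_\# g_0$ (not the constant $\gc$!) — that is, the Hodge star transforms naturally under the diffeomorphism but with respect to the \emph{pushed-forward} metric, and the whole content of the estimate is that replacing $\gc'$ by the constant approximant $\gc$ inside the star and the volume form costs only $\mathcal{O}(\kappa)$.
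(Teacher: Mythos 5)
Your proof is correct and follows essentially the same route as the paper's: both reduce to comparing the frozen constant metric with the actual metric via the estimates \eqref{metric est}--\eqref{inverse metric est} and the telescoping product identity, combined with one exact transport under $\Phi$ (the paper freezes to $g_0(P)$ on $U$ and then uses that $\Phi:(U,g_0(P))\to(\Phi(U),\gc)$ is an isometry, whereas you push forward exactly first and do the $\mathcal{O}(\kappa)$ comparison on $\Phi(U)$ -- an immaterial reordering). Your closing remark about the constant absorbing $\|\alpha\|_{L^2}\|\beta\|_{L^2}$ is a fair point that the paper glosses over as well.
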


\begin{proof}
We compute in local co-ordinates. Write $y = \Phi(x)$, $\alpha = \sum \alpha_{i_1\ldots i_\ell}dx^{i_1}\wedge\cdots\wedge dx^{i_\ell}$, and $\beta = \sum \beta_{j_1\ldots j_\ell}dx^{j_1}\wedge\cdots\wedge dx^{j_\ell}$, with summations taken over ascending $\ell$-tuples of indices $1 \leq i_1 <\ldots<i_\ell \leq n$ and $1 \leq j_1 <\ldots< j_\ell \leq n$. Then
\begin{align*}
\langle\alpha,\beta\rangle_{g_0} = \int_U g_0^{i_1 j_1}\cdots g_0^{i_\ell j_\ell} \alpha_{i_1\cdots i_\ell} \beta_{j_1\cdots j_\ell}\,\dd V_{g_0},
\end{align*}
where $\dd V_{g_0}$ is the Riemannian volume form. (Here the summation convention is assumed.) By Eqs.~\eqref{inverse metric est} and \eqref{metric est} we have
\begin{align*}
&\left\| g_0^{-1} - g_0^{-1}(P) \right\|_{C^0(U)} \leq C_7\kappa,\\
&\left\|\dd V_{g_0} - \dd V_{g_0(P)}\right\|_{C_0(U)} \leq C_8\kappa.
\end{align*}
It thus follows that
\begin{align}\label{C9}
&\left|(\alpha,\beta)_{g_0}- ( \alpha,\beta)_{g_0(P)}\right|\nonumber\\
&\quad = \left|\int_U g_0^{i_1 j_1}\cdots g_0^{i_\ell j_\ell} \alpha_{i_1\cdots i_\ell} \beta_{j_1\cdots j_\ell}\,\dd V_{g_0} - \int_{U}g_0(P)^{i_1 j_1}\cdots g_0(P)^{i_\ell j_\ell} \alpha_{i_1\cdots i_\ell} \beta_{j_1\cdots j_\ell}\,\dd V_{g_0(P)}\right|\nonumber\\
&\quad \leq C_9\kappa. 
\end{align}
In addition, in view of the definition of $\natural$ and $\gc$, we know that $\Phi$ is an isometry from $(U,g_0(P))$ to $(\Phi(U),\gc)$. That is, $$(\alpha,\beta)_{g_0(P)}=\left(\alpha^\natural,\beta^\natural\right)_\gc.$$ The constant $C_9$ depends only on $\kappa$, $n$, $\ell$, and the $C^1$-geometry of $U$.    \end{proof}

It is clear that Lemma~\ref{lem: inner prod comparison} remains valid for $\alpha$, $\beta$ with weaker regularities, as long as the pairings $(\alpha,\beta)$ and $\left(\alpha^\natural, \beta^\natural\right)_{\half}$ are well defined in the sense of distributions and $\alpha^\natural$, $\beta^\natural$ are well defined via pushforward. In particular, it holds when $\alpha \in L^r$ and $\beta \in L^{r'}$ for some $r \in ]1,\infty[$.

We next observe that $d$ and $\delta$ commute with $\natural$:
\begin{lemma}\label{lem: compare diff op}
For any differential $k$-forms $\alpha$ and $\beta$ on $U \subset \overline{M}$, we have
\begin{equation*}
(d\alpha)^\natural=d\left(\alpha^\natural\right),\qquad \left(\delta\alpha\right)^\natural = \delta\left(\alpha^\natural\right).
\end{equation*}
\end{lemma}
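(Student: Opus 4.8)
The plan is to reduce both identities to two standard naturality properties: the exterior derivative commutes with pullback along any smooth map, and the Hodge star is intertwined by isometries. Throughout, recall that $\natural=\Phi_\#$ and, since $\Phi$ is a diffeomorphism, $\Phi_\#=(\Phi^{-1})^\#$; thus $\natural$ is simultaneously a pushforward and a pullback. It suffices to establish the two displayed identities for a single arbitrary differential $k$-form, which we continue to call $\alpha$.

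For the first identity, the naturality of $d$ gives $\Psi^\#\circ d=d\circ\Psi^\#$ for every smooth map $\Psi$; taking $\Psi=\Phi^{-1}$ yields $d\big(\alpha^\natural\big)=d\big((\Phi^{-1})^\#\alpha\big)=(\Phi^{-1})^\#(d\alpha)=(d\alpha)^\natural$. This uses only the differentiable structures of $\M$ and $\half$, so it is blind to the Riemannian data. If $\alpha$ is merely of class $L^r$ (or $W^{1,r}$), so that $d\alpha$ is understood distributionally, the identity persists: $(\Phi^{-1})^\#$ is continuous on $L^1_{\loc}$ and commutes with the distributional $d$, as one sees by testing against smooth compactly supported forms on $\Phi(U)$ and transferring the pairing back to $U$ via $\Phi^\#$.

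For the second identity we bring in the metric. On an oriented Riemannian $n$-manifold the codifferential on $k$-forms is $\delta=\pm\,\star d\,\star$, with the sign depending only on $n$ and $k$; in particular $\delta$ is independent of the orientation, so no global orientation of the chart is needed. By the very definition of the pushforward metric $\Phi_\#g_0$ on $\Phi(U)$, the diffeomorphism $\Phi:(U,g_0)\to(\Phi(U),\Phi_\#g_0)$ is an isometry onto its image; consequently it intertwines the Hodge stars, $\Phi_\#\circ\star_{g_0}=\star_{\Phi_\#g_0}\circ\Phi_\#$, at every form degree. Combining this with the first identity, $(\delta\alpha)^\natural=\pm\,\Phi_\#\big(\star_{g_0}d\,\star_{g_0}\alpha\big)=\pm\,\star_{\Phi_\#g_0}\,d\,\star_{\Phi_\#g_0}\big(\Phi_\#\alpha\big)=\delta\big(\alpha^\natural\big)$, where the $\delta$ on the right is the codifferential of the pushforward metric $\Phi_\#g_0$; the distributional remark above again removes any smoothness hypothesis.

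The computation is essentially routine, and I expect no genuine obstacle; the one point I would flag is that the $\delta$-identity is an \emph{exact} equality precisely because the image chart carries the pushforward metric $\Phi_\#g_0$, for which $\Phi$ is a genuine isometry — not the nearby constant metric $\gc$. The discrepancy between $\delta$ taken with respect to $\Phi_\#g_0$ and with respect to $\gc$ is $\ok$ by virtue of the $C^1$-closeness estimates \eqref{metric est} and \eqref{inverse metric est}, and is absorbed separately when Proposition~\ref{prop: chart} is assembled from Corollary~\ref{cor: halfspace, constant metric} and Lemma~\ref{lem: inner prod comparison}.
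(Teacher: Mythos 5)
Your proof is correct, and for the $\delta$-identity it takes a genuinely different route from the paper's. The paper announces the same guiding principle in the remark preceding its proof --- that $\delta$ commutes with pushforward along an isometry --- but then verifies it by an explicit local-coordinate computation of $\star\, d\, \star$ on both sides, writing out the components of $\star\alpha^\natural$, $d\star\alpha^\natural$, \emph{etc.}, and matching terms via the chain rule. You instead derive the principle abstractly from the naturality of $d$ under pullback combined with the intertwining of the Hodge stars by isometries; your observation that the two occurrences of $\star$ in $\delta=\pm\star d\star$ render the identity orientation-insensitive is a point the paper does not address. What your route buys is that it sidesteps the paper's implicit assumption that the coordinate coframe is orthonormal (its displayed formulas for $\star\alpha^\natural$ and for $\delta\alpha$ on the domain are literally valid only in that case); what the coordinate computation buys is an explicit component formula, which however is not reused elsewhere. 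One point of divergence worth recording: the exact identity you prove is $\Phi_\#(\delta_{g_0}\alpha)=\delta_{\Phi_\# g_0}(\Phi_\#\alpha)$, whereas the identity needed downstream (Corollary~\ref{cor: d, delta commute with natural} and Proposition~\ref{prop: chart}) carries $\delta_{\gc}$ on the right with $\gc=\Phi_\#\left(g_0(P)\right)$, and correspondingly the paper's computation implicitly takes $\delta_{g_0(P)}$ on the domain side. In either reading an $\ok$ error must be absorbed somewhere --- between $\delta_{g_0}$ and $\delta_{g_0(P)}$ on $U$, or between $\delta_{\Phi_\# g_0}$ and $\delta_{\gc}$ on $\Phi(U)$ --- and you have correctly flagged this and pointed to Eqs.~\eqref{metric est} and \eqref{inverse metric est} and the assembly of Proposition~\ref{prop: chart} as where it is controlled; your version simply places the $\ok$ discrepancy on the other side of $\Phi$ from where the paper places it.
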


The former identity relies solely on the differentiable structure, while the latter relies additionally on the Riemannian structure. In general, the latter holds whenever the codifferentials are compatible with pushforward in the following sense: for a diffeomorphism $\Phi: (\M,g)\to(\M',g')$  that is \emph{isometric} and for any differential form $\alpha$ on $\M$, we have
\begin{align*}
\Phi_\#\left[\delta_{(\M,g)} \alpha\right] = \delta_{(\M',g')}\left[\Phi_\#\alpha\right].
\end{align*}

\begin{proof}
The identity for $d$ holds by
\begin{align*}
(d\alpha)^\natural=\Phi_\#(d\alpha) = d(\Phi_\#\alpha)=d\left(\alpha^\natural\right).
\end{align*}

To prove the identity for $\delta$, let us introduce the sign $\sigma=(-1)^{n(k+1)+1}$ (which is actually immaterial to the proof). It then holds that $$\delta=\sigma\star d \star,$$ where $\star$ is the Hodge star  on $(\half,\gc)$. As before, denote $y = \Phi(x)$ and $\alpha = \sum \alpha_{i_1\ldots i_\ell}dx^{i_1}\wedge\cdots\wedge dx^{i_k}$ in a fixed local co-ordinate system.

Let us first compute $\delta\left(\alpha^\natural\right)$. In local co-ordinates we have
\begin{align*}
\alpha^\natural = \sum\left(\alpha_{i_1\ldots i_\ell}\circ\Phi^{-1}\right) \left(\frac{\p\Phi^{j_1}}{\p x^{i_1}}\circ\Phi^{-1}\right)\cdots\left(\frac{\p\Phi^{j_k}}{\p x^{i_k}}\circ\Phi^{-1}\right)
dy^{j_1} \wedge \cdots \wedge dy^{j_k}.
\end{align*}
Let $dy^{\gamma_1}\wedge \cdots \wedge dy^{\gamma_{n-k}}$ be a simple $(n-k)$-form such that 
\begin{align*}
\left(dy^{\gamma_1}\wedge \cdots \wedge dy^{\gamma_{n-k}}\right)\wedge\left(dy^{j_1} \wedge \cdots \wedge dy^{j_k}\right)
\end{align*}
is the canonical unit volume form on $(\half,\gc)$. Then 
\begin{align*}
\star\alpha^\natural = \sum\left(\alpha_{i_1\ldots i_\ell}\circ\Phi^{-1}\right) \left(\frac{\p\Phi^{j_1}}{\p x^{i_1}}\circ\Phi^{-1}\right)\cdots\left(\frac{\p\Phi^{j_k}}{\p x^{i_k}}\circ\Phi^{-1}\right)
dy^{\gamma_1}\wedge \cdots \wedge dy^{\gamma_{n-k}}.
\end{align*}
Taking another exterior differential gives us
\begin{align*}
d\star\alpha^\natural = \frac{\p}{\p y^m}\left\{\left(\alpha_{i_1\ldots i_\ell}\circ\Phi^{-1}\right) \left(\frac{\p\Phi^{j_1}}{\p x^{i_1}}\circ\Phi^{-1}\right)\cdots\left(\frac{\p\Phi^{j_k}}{\p x^{i_k}}\circ\Phi^{-1}\right)\right\}dy^m \wedge 
dy^{\gamma_1}\wedge \cdots \wedge dy^{\gamma_{n-k}}.
\end{align*}
Finally, taking $\sigma$ and another Hodge star, we arrive at
\begin{align*}
\delta\left(\alpha^\natural\right) = \sigma (-1)^{\ell}\frac{\p}{\p y^{j_\ell}}\left\{\left(\alpha_{i_1\ldots i_\ell}\circ\Phi^{-1}\right) \left(\frac{\p\Phi^{j_1}}{\p x^{i_1}}\circ\Phi^{-1}\right)\cdots\left(\frac{\p\Phi^{j_k}}{\p x^{i_k}}\circ\Phi^{-1}\right)\right\} dy^{j_1} \wedge \cdots \wedge \widehat{dy^{j_\ell}}\wedge\cdots\wedge dy^{j_k}.
\end{align*}
Here $\widehat{dy^{j_\ell}}$ means that the component $dy^{j_\ell}$ has been omitted; \emph{ditto}. Note that $y^{j_\ell}$ is nothing but $\Phi^{j_\ell}$; one may thus conclude that 
\begin{align}\label{delta, long 1}
\delta\left(\alpha^\natural\right) &= \sigma (-1)^{\ell}\sum\left(\frac{\p\alpha_{i_1\ldots i_\ell}}{\p y^{j_\ell}}\circ\Phi^{-1}\right) \left(\frac{\p\Phi^{j_1}}{\p x^{i_1}}\circ\Phi^{-1}\right)\cdots\widehat{\left(\frac{\p\Phi^{j_\ell}}{\p x^{i_\ell}}\circ\Phi^{-1}\right)}\cdots\left(\frac{\p\Phi^{j_k}}{\p x^{i_k}}\circ\Phi^{-1}\right)\nonumber\\ &\qquad dy^{j_1} \wedge \cdots \wedge \widehat{dy^{j_\ell}}\wedge\cdots\wedge dy^{j_k}.
\end{align}

On the other hand,  $\left(\delta\alpha\right)^\natural$ can be computed as follows. Let $dx^{\beta_1}\wedge \cdots \wedge dx^{\beta_{{n-k}}}$ be a simple $(n-k)$-form such that 
\begin{align*}
(dx^{\beta_1}\wedge \cdots \wedge dx^{\beta_{{n-k}}}) \wedge (dx^{i_1}\wedge\cdots\wedge dx^{i_k})
\end{align*}
is the canonical unit volume form on the domain manifold. Then it holds that
\begin{align*}
\star \alpha = \sum{\alpha_{i_1\cdots i_k}}dx^{\beta_1}\wedge \cdots \wedge dx^{\beta_{{n-k}}}
\end{align*}
and that
\begin{align*}
d\star\alpha = \sum \frac{\p \alpha_{i_1\cdots i_k}}{\p x^q} dx^q \wedge dx^{\beta_1}\wedge \cdots \wedge dx^{\beta_{{n-k}}}.
\end{align*}
Taking $\sigma$ and another Hodge star gives us
\begin{align*}
\delta\alpha = \sigma (-1)^{\ell} \sum\frac{\p \alpha_{i_1\cdots i_k}}{\p x^{i_\ell}}dx^{i_1}\wedge\cdots\wedge \widehat{dx^{i_\ell}}\wedge \cdots \wedge dx^{i_k}.
\end{align*}

Therefore, the pushforward $\Phi_\#$ sends $\delta\alpha$ to $\delta\left(\alpha^\natural\right)$ in view of Eq.~\eqref{delta, long 1} and the chain rule, once we relabel the indices $i_1, \ldots, i_k$ as  $j_1, \ldots, j_k$.    \end{proof}

\begin{corollary}\label{cor: d, delta commute with natural}
For any differential $k$-forms on the chart $U$, it holds that
\begin{equation*}
\left( d \left(\alpha^\natural\right),d \left(\beta^\natural\right)\right)_{\gc} = \left(\left(d \alpha\right)^\natural,\left(d \beta\right)^\natural\right)_{\gc},\qquad
\left(\delta \left(\alpha^\natural\right),\delta \left(\beta^\natural\right)\right)_{\gc} = \left(\left(\delta \alpha\right)^\natural,\left(\delta \beta\right)^\natural\right)_{\gc}.
\end{equation*}
In fact, the above identities hold whenever the pairings are well defined in the distributional sense. \end{corollary}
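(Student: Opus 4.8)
The plan is to deduce Corollary~\ref{cor: d, delta commute with natural} immediately from Lemma~\ref{lem: compare diff op}. That lemma records the pointwise (form-level) identities $(d\alpha)^\natural=d(\alpha^\natural)$ and $(\delta\alpha)^\natural=\delta(\alpha^\natural)$ for every sufficiently regular $k$-form $\alpha$ on the chart $U$. So I would simply substitute: replacing $d(\alpha^\natural)$ by $(d\alpha)^\natural$ and $d(\beta^\natural)$ by $(d\beta)^\natural$ turns the left-hand side of the first asserted identity into its right-hand side,
\[
\left( d(\alpha^\natural), d(\beta^\natural) \right)_{\gc} = \left( (d\alpha)^\natural, (d\beta)^\natural \right)_{\gc},
\]
and the same substitution with $d$ replaced by $\delta$ gives the second identity. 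No computation beyond invoking Lemma~\ref{lem: compare diff op} is required.

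For the final assertion --- that the identities remain valid whenever the pairings are well defined in the distributional sense --- I would pass to the limit. Given forms of low regularity (the relevant case being $\alpha$, $\beta$ for which $\alpha,d\alpha,\delta\alpha$ lie in the appropriate Lebesgue spaces over $U$, and likewise for $\beta$), pick smooth approximants $\alpha_m\to\alpha$ and $\beta_m\to\beta$ converging in the graph norms of $d$ and $\delta$ (e.g.\ by mollification after pushing to the Euclidean picture). Because $\Phi$ is a fixed diffeomorphism of the fixed chart, $\natural=\Phi_\#$ is bounded on each of these spaces --- the relevant comparisons being exactly those performed, in local co-ordinates, for Lemma~\ref{lem: inner prod comparison} and in the proof of Corollary~\ref{cor: halfspace, constant metric} --- so the pushed-forward forms converge in the corresponding topologies on $(\Phi(U),\gc)$. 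Since $(\cdot,\cdot)_{\gc}$ is continuous on $L^r\times L^{r'}$ (Hölder's inequality, with $\gc$ comparable to the Euclidean metric), both sides of each identity pass to the limit, and the smooth case already settled completes the proof.

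I do not expect any genuine obstacle: the substance lies entirely in Lemma~\ref{lem: compare diff op}, and the only mildly technical point --- continuity of the fixed-chart pushforward on the relevant spaces of differential forms --- is a routine coordinate estimate of the same kind as those already established earlier in this subsection.
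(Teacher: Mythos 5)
Your proposal is correct and matches the paper's proof, which likewise derives the corollary as an immediate consequence of Lemma~\ref{lem: compare diff op} by substituting $(d\alpha)^\natural=d(\alpha^\natural)$ and $(\delta\alpha)^\natural=\delta(\alpha^\natural)$ into the pairings. Your additional mollification argument for the distributional case is a reasonable way to justify the corollary's final sentence, which the paper leaves implicit.
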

\begin{proof}
This is an immediate consequence of Lemma~\ref{lem: compare diff op}.
\end{proof}

Now we are at the stage of proving Proposition~\ref{prop: chart}:

\begin{proof}[Proof of Proposition~\ref{prop: chart}]
Given $\kappa>0$ and the manifold-with-boundary $(\M,g_0)$, we choose the chart $U$ as before. (See Remark~\ref{rem: C1 geometry} together with the estimates in Eqs.~\eqref{metric est}, \eqref{inverse metric est}, \eqref{det est}, and  \eqref{christoffel est}; the key is that the metric $g_0$ is $\mathcal{O}(\kappa)$-close on $U$ to the constant metric $g_0(P)$ in the $C^1$-topology.)

Let $u \in W^{1,q}_{\diamondsuit}\Omega^k(\M)$ be any $k$-form supported in $U \subset \clM$ with $q \in ]1,\infty[$.

Under the assumption~\eqref{assumption, chart}, namely that 
\begin{align*}
\sup\left\{\frac{\left| ( d u, d \phi)_{g_0} +( \delta u, \delta \phi)_{g_0} + ( u,\phi)_{g_0} \right|}{\|d\phi\|_{L^{r'}(U,g_0)} + \|\delta\phi\|_{L^{r'}(U,g_0)}+\|\phi\|_{L^{r'}(U,g_0)}}  :\,\phi \in C^\infty_\diamondsuit\Omega^k(U) \right\} < \infty
\end{align*}
for some $r \in ]1,\infty[$, we shall prove for both cases $\diamondsuit=N$ and $D$ at the same stroke.

First, let us estimate the numerator
\begin{align*}
\ll u,\phi\gg_{g_0}:= ( d u, d \phi)_{g_0} +( \delta u, \delta \phi)_{g_0} + ( u,\phi)_{g_0}
\end{align*}
from below. To this end, we consider the difference
\begin{align*}
D\{u,\phi\}:=\underbrace{\ll u,\phi\gg_{g_0}-\ll u,\phi\gg_{g_0(P)}}_{D_1} + \underbrace{\ll u,\phi\gg_{g_0(P)}-\ll u^\natural,\phi^\natural\gg_{\gc}}_{D_2},
\end{align*}
where as before $\gc := \Phi_\#\left( g_0(P) \right)$, and $\Phi$ is the co-ordinate map on the chart $U$. By Lemma~\ref{lem: bdry cond} we have $|D_2| \leq C_{9}\kappa$, and Eq.~\eqref{C9} implies $|D_1| \leq C_{9}\kappa$, where $C_{9}$ depends only on $k$, $n$, and the $C^1$-geometry of $U$. Thus
\begin{equation}\label{x1}
\left|\ll u,\phi\gg_{g_0}\right| \geq \left|\ll u^\natural,\phi^\natural\gg_{\gc}\right| - C_{10}\kappa,
\end{equation}
where $C_{10}$ has the same dependence.

Next we estimate the denominator 
\begin{align*}
|||\phi|||_{r'; g_0} := \|d\phi\|_{L^{r'}(U,g_0)} + \|\delta\phi\|_{L^{r'}(U,g_0)}+\|\phi\|_{L^{r'}(U,g_0)}
\end{align*}
from above. For this purpose we recall that
\begin{align*}
\|\alpha\|_{L^{r'}(U,g_0)} := \left\{\int_U \left(\langle\alpha,\alpha\rangle_{g_0}\right)^{\frac{r'}{2}} \,\dd V_{g_0}\right\}^{\frac{1}{r'}}
\end{align*}
for any differential $\ell$-form $\alpha$ on $U$. A straightforward adaptation of the proof for Lemma~\ref{lem: inner prod comparison} (see, in particular, Eq.~\eqref{C9} therein) gives us
\begin{align*}
\left|\|\alpha\|_{L^{r'}(U,g_0)} - \left\|\alpha^\natural\right\|_{L^{r'}(\Phi(U),\gc)}\right|\leq C_{11}\kappa,
\end{align*}
where $C_{11}$ depends only on $\ell$, $r'$, $n$, and the $C^1$-geometry of $U$. Taking $\alpha=d\phi$, $\delta\phi$, and $\phi$, respectively, we obtain
\begin{align}\label{x2}
|||\phi|||_{r',g_0} \leq \left|\left|\left|\phi^\natural\right|\right|\right|_{r',\gc} + C_{12}\kappa,
\end{align}
where $C_{12}$ depends only on $k$, $r$, $n$, and the $C^1$-geometry of $U$. 

Putting together Eqs.~\eqref{x1} and \eqref{x2},  recalling that $\phi^\natural$ in $C^\infty_\diamondsuit\Omega^k(\half)$ by Lemma~\ref{lem: bdry cond}, and invoking Corollary~\ref{cor: d, delta commute with natural} (with $\varphi \equiv \phi^\natural$), we arrive at the bounds
\begin{align*}
&\infty>\sup\left\{\frac{\left|( d u, d \phi)_{g_0} +(\delta u, \delta \phi)_{g_0} + ( u,\phi)_{g_0} \right|}{\|d\phi\|_{L^{r'}(U,g_0)} + \|\delta\phi\|_{L^{r'}(U,g_0)}+\|\phi\|_{L^{r'}(U,g_0)}}  :\,\phi \in C^\infty_\diamondsuit\Omega^k(U) \right\}\\
&\quad \geq \sup\left\{\frac{\left| \left( d u^\natural, d \varphi\right)_{\gc} +\left( \delta u^\natural, \delta \varphi\right)_{\gc} + \left( u^\natural,\varphi\right)_{g_0} \right|-C_{10}\kappa}{\|d\varphi\|_{L^{r'}(\Phi(U),\gc)} + \|\delta\varphi\|_{L^{r'}(\Phi(U),\gc)}+\|\varphi\|_{L^{r'}(\Phi(U),\gc)}+C_{12}\kappa}  :\,\varphi \in C^\infty_\diamondsuit\Omega^k(\half) \right\}\\
&\quad \geq C_{13}  \left(\sup\left\{\frac{\left| \left( d u^\natural, d \varphi\right)_{\gc} +\left( \delta u^\natural, \delta \varphi\right)_{\gc} + \left( u^\natural,\varphi\right)_{g_0} \right|}{\|d\varphi\|_{L^{r'}(\Phi(U),\gc)} + \|\delta\varphi\|_{L^{r'}(\Phi(U),\gc)}+\|\varphi\|_{L^{r'}(\Phi(U),\gc)}}  :\,\varphi \in C^\infty_\diamondsuit\Omega^k(\half) \right\}-\kappa\right),
\end{align*}
where $C_{13}$ depends only on $C_{10}$ and $C_{12}$.

Then, one may refer to Corollary~\ref{cor: halfspace, constant metric} to deduce that $u^\natural \in W^{1,r}\Omega^k(\half,\gc)$. Moreover, it satifies the bound
\begin{align*}
\left\|u^\natural\right\|_{W^{1,r}(\half,\gc)} \leq C_{14}\sup\left\{\frac{\left| \left( d u^\natural, d \varphi\right)_{\gc} +\left( \delta u^\natural, \delta \varphi\right)_{\gc} + \left( u^\natural,\varphi\right)_{g_0} \right|}{\|d\varphi\|_{L^{r'}(\Phi(U),\gc)} + \|\delta\varphi\|_{L^{r'}(\Phi(U),\gc)}+\|\varphi\|_{L^{r'}(\Phi(U),\gc)}}  :\,\varphi \in C^\infty_\diamondsuit\Omega^k(\half) \right\},
\end{align*}
with $C_{14}$ depending on nothing but $k$, $r$, $n$, and the $C^1$-geometry of $U$. 

On the other hand, the same proof for Eq.~\eqref{x2} (recalling the definition of $|||\bullet|||_{r',g_0}$ and $|||\bullet|||_{r',\gc}$) also gives us
\begin{align*}
\|u\|_{W^{1,r}(U,g_0)} \leq \left\|u^\natural\right\|_{W^{1,r}(\Phi(U),\gc)} + C_{15}\kappa,
\end{align*}
where $C_{15}$ has the same dependence as $C_{14}$. We thus arrive at
\begin{align}\label{C16}
\|u\|_{W^{1,r}(U,g_0)} &\leq  C_{14}\sup\left\{\frac{\left| \left( d u^\natural, d \varphi\right)_{\gc} +\left( \delta u^\natural, \delta \varphi\right)_{\gc} + \left( u^\natural,\varphi\right)_{g_0} \right|}{\|d\varphi\|_{L^{r'}(\Phi(U),\gc)} + \|\delta\varphi\|_{L^{r'}(\Phi(U),\gc)}+\|\varphi\|_{L^{r'}(\Phi(U),\gc)}}  :\,\varphi \in C^\infty_\diamondsuit\Omega^k(\half) \right\} +C_{15}\kappa\nonumber\\
&\leq \frac{C_{14}}{C_{13}}\left(S[u] +\kappa \right) +C_{15}\kappa\nonumber\\
&\leq C_{16} \left(S[u] +\kappa \right).
\end{align}
Here $C_{16}$ depends again only on $k$, $n$, $r$, and the $C^1$-geometry of $U$, and we adopt the notation
\begin{align*}
S[u]:=\sup\left\{\frac{\left| ( d u, d \phi)_{g_0} +( \delta u, \delta \phi)_{g_0} +( u,\phi)_{g_0} \right|}{\|d\phi\|_{L^{r'}(U,g_0)} + \|\delta\phi\|_{L^{r'}(U,g_0)}+\|\phi\|_{L^{r'}(U,g_0)}}  :\,\phi \in C^\infty_\diamondsuit\Omega^k(U) \right\}.
\end{align*}

We \emph{claim} that one may assume $\kappa \leq S[u]$. Indeed, since $\kappa$ and $C_{16}$ are both independent of $u$, we can scale $u \mapsto \lambda u$ in  Eq.~\eqref{C16} to get $\|\lambda u\|_{W^{1,r}(U,g_0)} \leq C_{16}(\lambda S[u] + \kappa)$ for any $\lambda >0$. Hence, were the \emph{claim} false, one must have had $S[u]=0$. But then Eq.~\eqref{C16} would imply that $\|u\|_{W^{1,r}(U,g_0)} \leq C_{16} \kappa$ for arbitrarily small $\kappa$, so $u \equiv 0$.

Therefore, the above \emph{claim} together with Eq.~\eqref{C16} yields that
\begin{align*}
\|u\|_{W^{1,r}(U,g_0)} \leq 2\cdot C_{16}\sup\left\{\frac{\left|( d u, d \phi)_{g_0} +(\delta u, \delta \phi)_{g_0} + ( u,\phi)_{g_0} \right|}{\|d\phi\|_{L^{r'}(U,g_0)} + \|\delta\phi\|_{L^{r'}(U,g_0)}+\|\phi\|_{L^{r'}(U,g_0)}}  :\,\phi \in C^\infty_\diamondsuit\Omega^k(U) \right\}<\infty.
\end{align*}
In particular, by assumption~ \eqref{assumption, chart} one has $u \in W^{1,r}\Omega^k(U,g_0)$. The proof is now complete.  \end{proof}

\subsection{Proof of Theorem~B}\label{subsec: proof of thm B}

Finally, we are in the situation of concluding the proof of Theorem~B. The cases $\diamondsuit=D$ and $N$ shall the treated simultaneously.

Since $\M$ is bounded, when $q \geq r$ there is nothing to prove (as $L^q\Omega^k(\M) \subset L^r\Omega^k(\M)$). So we assume throughout in the sequel that 
\begin{equation}\label{range of indices}
1<q<r<\infty. 
\end{equation}

Let $\kappa_0$ be an arbitrarily small positive number. Since the manifold-with-boundary $(\M,g_0)$ has $C^1$-bounded geometry, one may select $\mathsf{Atlas}$, a \emph{``$\kappa_0$-good'' atlas}  as follows. $\mathsf{Atlas}$ consists of charts $\{U_i\}_{i=0}^N$ such that
\begin{align}\label{good chart, 1}
\bigcup_{i=1}^N U_i\supset \p\M
\end{align}
and that for an point $P \in U_i$, one has
\begin{equation}\label{good chart, 2}
\sup_{1\leq i \leq N}\left\{
\left\|g_0 - g_0(P)\right\|_{C^0(U_i)} + \left\|Dg_0 - [Dg_0](P)\right\|_{C^0(U_i)}  \right\}\leq \kappa_0.
\end{equation}
The former condition~\eqref{good chart, 1} means that $U_0$ is an interior chart of $\M$ and $U_1,\ldots,U_N$ are boundary charts; the latter condition~\eqref{good chart, 2} is a restatement of Remark~\ref{rem: C1 geometry}.

Next, let $\{\chi_i\}_{i=0}^N$ be a differentiable partition of unity subordinate to $\mathsf{Atlas}$. The quantity
\begin{align*}
\sup_{0\leq i \leq N} \|\chi_i\|_{C^1(\M,g_0)} 
\end{align*}
is finite, as it can be controlled by the $C^1$-geometry of $(\M,g)$. Consider
\begin{equation*}
u_j := \chi_j u\qquad \text{ for } j \in \{1,2,\ldots,N\}.
\end{equation*}
We check that for each such $j$ there holds $u_j \in W^{1,r}\Omega^k(\M)$. As a remark, by construction $u_j$ is compactly supported in the boundary chart $U_j$. 

We proceed with straightforward computations. Note first that
\begin{align*}
(d u_j, d \phi)_{g_0} = (d\chi_j \wedge u,d\phi)_{g_0} + \chi_j ( d u, d \phi)_{g_0},
\end{align*}
where $d\chi_j \wedge u$ is the wedge product of a $1$-form with an $r$-form; similarly we have
\begin{equation*}
(\delta u_j, \delta \phi)_{g_0} = \sigma\left(\star\left[ d\chi_j \wedge \star u \right],d\phi \right)_{g_0} +\chi_j (\delta u, \delta \phi)_{g_0},
\end{equation*}
where $\star$ is the Hodge star operator and $\sigma=(-1)^{n(k+1)+1}$. Hence,
\begin{align*}
&\frac{(d u_j, d \phi)_{g_0} +( \delta u_j, \delta \phi)_{g_0} + ( u_j,\phi)_{g_0} }{\|d\phi\|_{L^{r'}(U_j,g_0)} + \|\delta\phi\|_{L^{r'}(U_j,g_0)}+\|\phi\|_{L^{r'}(U_j,g_0)}} \\
&\quad = \chi_j\left(\frac{ (d u, d \phi)_{g_0} +( \delta u, \delta \phi)_{g_0} + (u,\phi)_{g_0} }{\|d\phi\|_{L^{r'}(U_j,g_0)} + \|\delta\phi\|_{L^{r'}(U_j,g_0)}+\|\phi\|_{L^{r'}(U_j,g_0)}} \right) +\frac{ (d\chi_j \wedge u,d\phi)_{g_0} + \sigma\left(\star\left[ d\chi_j \wedge \star u \right],d\phi \right)_{g_0}}{\|d\phi\|_{L^{r'}(U_j,g_0)} + \|\delta\phi\|_{L^{r'}(U_j,g_0)}+\|\phi\|_{L^{r'}(U_j,g_0)}} \\
&\quad= :I_1+ I_{2},
\end{align*}
for arbitrary test $k$-form $\phi$ compactly supported in $U_j$.

Now we turn to the estimation for $I_1$, $I_2$. We consider two cases separately. Recall
$\dim\M=n$, that $u$ is a $k$-form with finite $W^{1,q}$-norm, and that
\begin{equation*}
\mathfrak{S}[u] := \sup\left\{\frac{\left|( d u, d \phi)_{g_0} +(\delta u, \delta \phi)_{g_0} + ( u,\phi)_{g_0} \right|}{\|d\phi\|_{L^{r'}(\M,g_0)} + \|\delta\phi\|_{L^{r'}(\M,g_0)}+\|\phi\|_{L^{r'}(\M,g_0)}}  :\,\phi \in C^\infty_\diamondsuit\Omega^k(\M) \right\}<\infty.
\end{equation*}

\smallskip
\noindent
\underline{Case 1: $q \geq n$}. We have the standard Sobolev--Morrey embedding $W^{1,q}(U_j) \emb L^r(U_j)$; so
\begin{align*}
|I_2| &\leq C_{17}\frac{\|u\|_{L^r(U_j, g_0)}\|d\phi\|_{L^{r'}(U_j, g_0)}}{{\|d\phi\|_{L^{r'}(U_j,g_0)} + \|\delta\phi\|_{L^{r'}(U_j,g_0)}+\|\phi\|_{L^{r'}(U_j,g_0)}}} \\
&\leq C_{18} \|u\|_{W^{1,q}(U_j, g_0)},
\end{align*}
where $C_{17}$ and $C_{18}$ depend only on $r$, $k$, $n$, and the $C^1$-geometry of $U_j$.

On the other hand, it is clear that $|I_1| \leq \mathfrak{S}[u]$, for which purpose we consider the extension-by-zero of $\phi \in C^\infty_{c}\Omega^k(U_j)$ to a test form on $\M$. In this way we have
\begin{align*}
\left|\frac{( d u_j, d \phi)_{g_0} +(\delta u_j, \delta \phi)_{g_0} + ( u_j,\phi)_{g_0} }{\|d\phi\|_{L^{r'}(U_j,g_0)} + \|\delta\phi\|_{L^{r'}(U_j,g_0)}+\|\phi\|_{L^{r'}(U_j,g_0)}} \right| \leq C_{18} \|u\|_{W^{1,q}(U_j, g_0)} + \mathfrak{S}[u] < \infty.
\end{align*}

But $u_j$ is supported on a sufficiently small chart, hence we may apply  
Proposition~\ref{prop: chart} to infer that
\begin{align*}
u_j \equiv \chi_j u \in W^{1,r}\Omega^k(U_j, g_0),
\end{align*}
 together with the estimate
\begin{align*}
\|u_j\|_{W^{1,r}(U_j,g_0)} \leq C_{19} \left( C_{18} \|u\|_{W^{1,q}(U_j, g_0)} + \mathfrak{S}[u]\right),
\end{align*}
where $C_{19}$ depends again only on $r$, $k$, $n$, and the $C^1$-geometry of $U_j$. Thus
\begin{align*}
\|u\|_{W^{1,r}(\M,g_0)} \leq (N+1)C_{19} \left( C_{18} \|u\|_{W^{1,q}(U_j, g_0)} + \mathfrak{S}[u]\right).
\end{align*}
Here $(N+1)$ is the number of charts in $\mathsf{Atlas}$, which can be controlled in turn by the $C^1$-geometry of $(\M,g_0)$.

To conclude the proof for Case 1, it remains to note that the quantitative statement~\eqref{r-est on chart, lemma} in Proposition~\ref{prop: chart} allows us to bound (with $C_{20}$ having the same dependence with $C_{19}$) that
\begin{align*}
&\|u\|_{W^{1,r}(U_j,g_0)}\\
 \leq\,& C_{20}\sup\left\{\frac{\left|( d u, d \phi)_{g_0} +(\delta u, \delta \phi)_{g_0} + ( u,\phi)_{g_0} \right|}{\|d\phi\|_{L^{r'}(U_j,g_0)} + \|\delta\phi\|_{L^{r'}(U_j,g_0)}+\|\phi\|_{L^{r'}(U_j,g_0)}}  :\,\phi \in C^\infty_\diamondsuit\Omega^k(\M),\,{\rm supp}\,\phi \Subset U_j \right\}.
\end{align*}
Therefore, we get
\begin{equation}\label{final bound, A}
\|u\|_{W^{1,r}(\M,g_0)} \leq (N+1)C_{19} \left( C_{18} C_{20}+1\right)\mathfrak{S}[u] <\infty.
\end{equation}

\smallskip
\noindent
\underline{Case 2: $1<q < n$}. In this case we shall follow the clever argument on p.1882 of Kozono--Yanagisawa \cite{ky}, which allows us to raise the regularity of $u$ from $W^{1,q}$ to $W^{1,s(q)}$ and then to $W^{1,s(s(q))}$, so on and so forth; here and hereafter, for $p \in ]1,\infty[$, $s(p)$ denotes the Sobolev conjugate of $p$. In each step it holds that
\begin{align*}
\overbrace{s\circ\ldots \circ s}^{(m+1)\text{ times}} (q) - 
\overbrace{s\circ\ldots \circ s}^{m\text{ times}} (q) \geq \e_0 >0,
\end{align*}
where $\e_0$ is independent of $m$. Thus in finitely many steps we shall return to Case 1. This approach is reminiscent of the ``LIR'' (local increasing regularity) method due to Amar (\cite{a1, a2}).

Note first that, again, we trivially have $|I_1| \leq \mathfrak{S}[u]$. To control $I_2$, consider as before
\begin{align*}
|I_2| \leq C_{21} \|u\|_{L^r(U_j,g_0)} \leq C_{22}\|u\|_{W^{1,\frac{nr}{n+r}}(U_j,g_0)}.
\end{align*}
Here $\frac{nr}{n+r}$ is the Sobolev predual for $r$. Therefore, all the arguments in Case~1 carry through whenever $s(r) \geq q$, with the constants again depending only on $r$, $n$, $k$, and the $C^1$-geometry of $\M$. 

This proves that
\begin{align}\label{scheme}
u \in \widehat{W^{1,q}_\diamondsuit}\Omega^k(\M) \quad \Longrightarrow \quad u \in \widehat{W^{1,s(q)}_\diamondsuit}\Omega^k(\M),
\end{align}
where 
$s(q) := \frac{nq}{n-q}$. Thus $\overbrace{s\circ\ldots\circ s}^{m\text{ times}}(q) = \frac{nq}{n-mq}$, which gives us
\begin{align*}
\overbrace{s\circ\ldots\circ s}^{(m+1)\text{ times}}(q) -\overbrace{s\circ\ldots\circ s}^{m\text{ times}}(q) =\frac{nq^2}{(n-mq)(n-mq-q)} \geq \e_0(q,n)>0,
\end{align*}
with $\e_0$ being independent of $m$. For example, clearly one may take $\e_0=n^{-1}q^2$.

As a consequence, we can now  continue the implication in Eq.~\eqref{scheme} by
\begin{align*}
u \in \widehat{W^{1,q}_\diamondsuit}\Omega^k(\M) &\, \Rightarrow \, u \in \widehat{W^{1,s(q)}_\diamondsuit}\Omega^k(\M) \\
&\, \Rightarrow \, u \in \widehat{W^{1,s(s(q))}_\diamondsuit}\Omega^k(\M)  \\
&\,  \Rightarrow \cdots\Rightarrow \, u \in
\widehat{W^{1,{\underbrace{s\circ\ldots\circ s}_{m\text{ times}}(q)}}_\diamondsuit}\Omega^k(\M)\subset\widehat{W^{1,r}_\diamondsuit}\Omega^k(\M)
\end{align*}
once we choose $m$ so large that $mq^2/n \geq r$, thanks to the compactness of $\M$. Then we may proceed as in Case 1. The proof of Theorem~B is now complete.

\subsection{A remark}
For subsequent developments, we note that Theorem~B remains to hold if $\|d\phi\|_{L^{r'}}+ \|\delta\phi\|_{L^{r'}}$ in the denominator is replaced by $\|\na \phi\|_{L^{r'}}$, where $\na$ is the covariant derivative obtained from the Levi-Civita connection.

\begin{corollary}\label{cor: main}
Let $(\M,g_0)$ be an $n$-dimensional compact Riemannian manifold-with-boundary, let $\diamondsuit$ denote either $D$ or $N$,  let $r$ and $q$ be any numbers in $]1,\infty[$, and let $k\in\{0,1,2,\ldots,n\}$. Assume $u \in W^{1,q}_\diamondsuit\Omega^k(\M,g_0)$ and that the quantity below is finite:
\begin{equation*}
\sup\left\{\frac{\left| ( d u, d \phi)_{g_0} +( \delta u, \delta \phi)_{g_0} + ( u,\phi)_{g_0} \right|}{\|\na\phi\|_{L^{r'}(\M,g_0)}+\|\phi\|_{L^{r'}(\M,g_0)}}:\,\phi \in C^\infty_\diamondsuit\Omega^k(\M) \right\}.
\end{equation*}
Then $u \in  W^{1,r}_\diamondsuit\Omega^k(\M)$.

 In addition, the following variational inequality is satisfied:
\begin{equation*}
\|u\|_{W^{1,r}} \leq C\sup \left\{\frac{\left|( d u, d \phi)_{g_0} +(\delta u, \delta \phi)_{g_0} + ( u,\phi)_{g_0} \right|}{\|\na\phi\|_{L^{r'}(\M,g_0)}  +\|\phi\|_{L^{r'}(\M,g_0)}}:\,\phi \in C^\infty_\diamondsuit\Omega^k(\M)  \right\},
\end{equation*}
where $C$ depends only on $r$, $n$, $k$, and the $C^1$-geometry of $(\M,g_0)$.
\end{corollary}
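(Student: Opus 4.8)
The plan is to deduce the corollary from Theorem~B by comparing the two denominators --- the one in Theorem~B, namely $\|d\phi\|_{L^{r'}} + \|\delta\phi\|_{L^{r'}} + \|\phi\|_{L^{r'}}$, and the one here, namely $\|\na\phi\|_{L^{r'}} + \|\phi\|_{L^{r'}}$. Since $d\phi$ and $\delta\phi$ are both obtained from $\na\phi$ by pointwise algebraic (metric-contraction and antisymmetrisation) operations, we have the trivial pointwise bound $|d\phi| + |\delta\phi| \leq C(n,k)|\na\phi|$, whence
\begin{equation*}
\|d\phi\|_{L^{r'}(\M,g_0)} + \|\delta\phi\|_{L^{r'}(\M,g_0)} + \|\phi\|_{L^{r'}(\M,g_0)} \leq C\left(\|\na\phi\|_{L^{r'}(\M,g_0)} + \|\phi\|_{L^{r'}(\M,g_0)}\right).
\end{equation*}
Consequently the supremum appearing in Theorem~B is bounded above by $C$ times the supremum appearing in the corollary; in particular, finiteness of the latter forces finiteness of the former, and Theorem~B then gives both $u \in \widehat{W^{1,r}_\diamondsuit}\Omega^k(\M)$ and the estimate $\|u\|_{W^{1,r}} \leq C\,\fs[u]$ with $\fs[u]$ denoting the Theorem~B supremum. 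Chaining this with the displayed inequality yields exactly the asserted bound in terms of the $\|\na\phi\|_{L^{r'}}+\|\phi\|_{L^{r'}}$ denominator.

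The one point that requires a word of care is that $W^{1,r}_\diamondsuit\Omega^k(\M)$ and $\widehat{W^{1,r}_\diamondsuit}\Omega^k(\M)$ must be identified: the hypothesis is phrased for $u \in W^{1,q}_\diamondsuit\Omega^k(\M)$ (defined via the trace operators $\ttt,\nnn$), whereas Theorem~B is phrased for $u \in \widehat{W^{1,q}_\diamondsuit}\Omega^k(\M)$ (defined via $\nu\mres u$ or $u\wedge\nu^\flat$ vanishing on $\p\M$). But for a form already known to lie in $W^{1,q}\Omega^k(\M)$ --- so that its boundary trace is a genuine $W^{1-1/q',q}$-form on $\p\M$ rather than merely a distribution --- the conditions $\nnn u = 0$ and $\nu\mres u|_{\p\M} = 0$ coincide, and likewise $\ttt u = 0$ coincides with $u\wedge\nu^\flat|_{\p\M}=0$, by the definitions in \eqref{t and n} and Lemma~\ref{lem: n and t}; thus the hypotheses of Theorem~B are met, and its output lands in $W^{1,r}_\diamondsuit\Omega^k(\M)$ as claimed.

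I do not anticipate a genuine obstacle here: the argument is purely a matter of the elementary pointwise inequality $|d\phi|+|\delta\phi|\lesssim|\na\phi|$ together with monotonicity of the supremum under shrinking the denominator. The only mild subtlety worth stating explicitly in the write-up is that the reverse inequality $\|\na\phi\|_{L^{r'}} \lesssim \|d\phi\|_{L^{r'}}+\|\delta\phi\|_{L^{r'}}$ is \emph{not} needed and indeed is not available at the level of test forms without invoking Gaffney-type estimates --- but it is precisely because we only need the ``easy'' direction of the comparison that the corollary follows formally from Theorem~B.
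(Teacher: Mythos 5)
Your reduction to Theorem~B fails because the comparison between the two suprema goes in the wrong direction. The pointwise bounds $|d\phi|\le C(n,k)\,|\na\phi|$ and $|\delta\phi|\le C(n,k)\,|\na\phi|$ are correct and do give
\[
\|d\phi\|_{L^{r'}}+\|\delta\phi\|_{L^{r'}}+\|\phi\|_{L^{r'}}\;\le\; C\left(\|\na\phi\|_{L^{r'}}+\|\phi\|_{L^{r'}}\right),
\]
but a \emph{smaller} denominator produces a \emph{larger} quotient: writing $S_d$ for the supremum in Theorem~B and $S_{\na}$ for the one in the Corollary, the displayed inequality yields $S_{\na}\le C\,S_d$, not $S_d\le C\,S_{\na}$ as you assert. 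Hence finiteness of $S_{\na}$ (the hypothesis of the Corollary) does not imply finiteness of $S_d$ (the hypothesis of Theorem~B), and even if it did, the bound you would inherit, $\|u\|_{W^{1,r}}\le C S_d$, is weaker than the asserted $\|u\|_{W^{1,r}}\le C S_{\na}$. The Corollary is in fact a strict strengthening of Theorem~B: weaker hypothesis, stronger conclusion. The ``easy'' direction of the comparison is exactly the useless one; the direction you would need, $\|\na\phi\|_{L^{r'}}\lesssim\|d\phi\|_{L^{r'}}+\|\delta\phi\|_{L^{r'}}+\|\phi\|_{L^{r'}}$ for test forms, is itself a Gaffney-type inequality --- precisely what the paper is building towards --- so it cannot be invoked here.

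What the paper actually does is rerun the entire proof of Theorem~B (\S\S\ref{subsec a1}--\ref{subsec: proof of thm B}) with the denominator $\|\na\phi\|_{L^{r'}}+\|\phi\|_{L^{r'}}$ carried through every step. The only place where the structure of the denominator genuinely matters is Lemma~\ref{lem: whole space}: after restricting to test forms $\phi=\na_j\psi$, one bounds $\|\na\na_j\psi\|_{L^{r'}(\R^n)}\lesssim\|\Delta\psi\|_{L^{r'}(\R^n)}+\|\psi\|_{L^{r'}(\R^n)}$ directly by the Calder\'on--Zygmund estimate, so the duality argument against $\fs_k=\{(\1-\Delta)\psi\}$ is unaffected; the half-space, constant-metric, and small-chart steps then only require that $\na$ commutes with the reflection (Eq.~\eqref{new2} with $T=\na$) and with pushforward by a co-ordinate chart (pullback connections), together with the Christoffel-symbol comparison \eqref{christoffel est}. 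Your remark on identifying $W^{1,q}_\diamondsuit$ with $\widehat{W^{1,q}_\diamondsuit}$ for forms already in $W^{1,q}$ is fine, but it is ancillary to the real issue.
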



\begin{proof}
	All the arguments in \S\S\ref{subsec a1}, \ref{subsec a2}, \ref{subsec a3}, and \ref{subsec: proof of thm B} go through, as long as we make the following modifications: In the proof of Lemma~\ref{lem: whole space}, we change the inequality~\eqref{new 1} to
	\begin{align*}
&\sup\left\{\frac{\left| (d u, d \phi) +(\delta u, \delta \phi) +(u,\phi) \right|}{\|\na\phi\|_{L^{r'}(\R^n)}+\|\phi\|_{L^{r'}(\R^n)}}  :\,\phi \in C^\infty_{c}\Omega^k(\R^n) \right\} \\
&\quad \geq 
\sup\left\{\frac{\left| ( d u, d \na_j\psi) +( \delta u, \delta \na_j\psi)+ ( u,\na_j\psi) \right|}{\|\na\na_i\psi\|_{L^{r'}(\R^n)}+\|\na_j\psi\|_{L^{r'}(\R^n)}}  :\,\psi \in C^\infty_{c}\Omega^k(\R^n) \right\},
\end{align*}
and still use the Calder\'{o}n--Zygmund estimate to bound $\|\na\na_i\psi\|_{L^{r'}(\R^n)}\lesssim \|\Delta\psi\|_{L^{r'}(\R^n)} + \|\psi\|_{L^{r'}(\R^n)}$. For the proof of Lemma~\ref{lem: halfspace}, we only need to observe that Eq.~\eqref{new2} remains to hold for $T=\na$. In addition, the analogue of Lemma~\ref{lem: compare diff op}, namely that $(\na \alpha)^\natural = \na\left( \alpha^\natural\right)$, holds from the definition of pullback connections; also see Eq.~\eqref{christoffel est} for estimates on the difference of connections. Thus the arguments for Corollary~\ref{cor: halfspace, constant metric}, Proposition~\ref{prop: chart}, and Theorem~B carry over almost verbatim, once we change each occurrence of $\|d\phi\|_{L^{r'}} +\|\delta\phi\|_{L^{r'}}$ to $\|\na\phi\|_{L^{r'}}$.   \end{proof}

Also, taking $q=r$ in Theorem~B and Corollary~\ref{cor: main}, we immediately get the following estimation for the $W^{1,r}$-norm of $u$ by duality:
\begin{proposition}\label{prop: W1,r estimate}
Let $(\M,g_0)$ be an $n$-dimensional compact Riemannian manifold-with-boundary, let $k \in \{0,1,2,\ldots,n\}$, and let $r \in ]1,\infty[$ be arbitrary. Assume that $u \in \widehat{W^{1,r}_\diamondsuit}\Omega^k(\M)$ for either $\diamondsuit=D$ or $N$. Then
\begin{equation*}
\|u\|_{W^{1,r}(\M,g_0)} \leq C\sup \left\{\frac{\left| ( d u, d \phi)_{g_0} +( \delta u, \delta \phi)_{g_0} + ( u,\phi)_{g_0} \right|}{\|d\phi\|_{L^{r'}(\M,g_0)} + \|\delta\phi\|_{L^{r'}(\M,g_0)}+\|\phi\|_{L^{r'}(\M,g_0)}} :\,\phi \in C^\infty_{\diamondsuit}\Omega^{k}(\M) \right\},
\end{equation*}
where $C$ depends only on $r$, $n$, $k$, and the $C^1$-geometry of $(\M,g_0)$.

There is another constant $C'$ with the same dependency as $C$ such that 
\begin{equation*}
\|u\|_{W^{1,r}(\M,g_0)} \leq C'\sup \left\{\frac{\left| (d u, d \phi)_{g_0} +( \delta u, \delta \phi)_{g_0} + ( u,\phi)_{g_0} \right|}{\|\na\phi\|_{L^{r'}(\M,g_0)} +\|\phi\|_{L^{r'}(\M,g_0)}} :\,\phi \in C^\infty_{\diamondsuit}\Omega^{k}(\M) \right\}.
\end{equation*}
\end{proposition}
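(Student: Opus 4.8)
The plan is to read this off Theorem~B and Corollary~\ref{cor: main} by taking the auxiliary exponent equal to $r$; the proposition is a repackaging, so the entire task is to check that, for $u \in \widehat{W^{1,r}_\diamondsuit}\Omega^k(\M)$, both hypotheses of those results hold with $q=r$ — namely that $u$ lies in the relevant $W^{1,r}_\diamondsuit$-space, and that the supremum on the right-hand side is finite. Granting these, Theorem~B with $q=r$ yields the first displayed inequality verbatim, and Corollary~\ref{cor: main} with $q=r$ yields the second.

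For the first hypothesis I would simply note the inclusions $\widehat{W^{1,r}_N}\Omega^k(\M) \subseteq W^{1,r}_N\Omega^k(\M)$ and $\widehat{W^{1,r}_D}\Omega^k(\M) \subseteq W^{1,r}_D\Omega^k(\M)$: membership in $W^{1,r}\Omega^k(\M)$ already guarantees $du \in L^r\Omega^{k+1}(\M)$ and $\delta u \in L^r\Omega^{k-1}(\M)$, while the algebraic traces $\nu\mres u|\p\M$ and $u\wedge\nu^\flat|\p\M$ agree with $\nnn u$ and $\ttt u$ respectively, by \eqref{t and n} and Lemma~\ref{lem: n and t}. For the second hypothesis I would invoke Cauchy--Schwarz for the fibrewise pairing followed by H\"older's inequality on $(\M,g_0)$: for every $\phi \in C^\infty_\diamondsuit\Omega^k(\M)$,
\begin{align*}
\left| (du,d\phi)_{g_0} + (\delta u,\delta\phi)_{g_0} + (u,\phi)_{g_0} \right|
&\leq \|du\|_{L^r}\|d\phi\|_{L^{r'}} + \|\delta u\|_{L^r}\|\delta\phi\|_{L^{r'}} + \|u\|_{L^r}\|\phi\|_{L^{r'}} \\
&\leq \bigl(\|du\|_{L^r} + \|\delta u\|_{L^r} + \|u\|_{L^r}\bigr)\bigl(\|d\phi\|_{L^{r'}} + \|\delta\phi\|_{L^{r'}} + \|\phi\|_{L^{r'}}\bigr),
\end{align*}
using $\sum_i a_i b_i \leq \bigl(\sum_i a_i\bigr)\bigl(\sum_i b_i\bigr)$ for nonnegative reals in the last step. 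Since $u \in W^{1,r}\Omega^k(\M)$ the first factor is finite, so the supremum in the statement is bounded above by $\|du\|_{L^r} + \|\delta u\|_{L^r} + \|u\|_{L^r} < \infty$. For the variant with $\|\na\phi\|_{L^{r'}}+\|\phi\|_{L^{r'}}$ in the denominator one argues identically, additionally using the pointwise bound $|d\phi| + |\delta\phi| \leq C(n,k)\,|\na\phi|$ (valid since $d\phi$ and $\delta\phi$ are obtained from $\na\phi$ by bounded algebraic contractions) to pass from $\|d\phi\|_{L^{r'}}+\|\delta\phi\|_{L^{r'}}$ to $\|\na\phi\|_{L^{r'}}$.

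I do not anticipate any real obstacle: the only computation is the H\"older bound above, and everything else is bookkeeping already supplied by Theorem~B and Corollary~\ref{cor: main}, both of which are stated for arbitrary $q \in \one$ and hence apply with $q=r$. The sole mild point worth flagging is that the body of the proof of Theorem~B is carried out for $1<q<r<\infty$; if one wishes to rely only on that range, the case $q=r$ is recovered by applying the $q<r$ conclusion — the hypothesis $u\in\widehat{W^{1,q}_\diamondsuit}$ for $q<r$ being automatic from $u\in\widehat{W^{1,r}_\diamondsuit}$ since $\M$ is compact — and this detour changes nothing in the final estimate. Thus the constants $C$, $C'$ retain the claimed dependence on $r$, $n$, $k$, and the $C^1$-geometry of $(\M,g_0)$, and the proof is complete.
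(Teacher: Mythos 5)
Your proposal is correct and follows essentially the same route as the paper, which deduces the proposition precisely by taking $q=r$ in Theorem~B and Corollary~\ref{cor: main}; you merely make explicit the routine verifications (the inclusion $\widehat{W^{1,r}_\diamondsuit}\subseteq W^{1,r}_\diamondsuit$, the finiteness of the suprema via H\"older, and the pointwise bound $|d\phi|+|\delta\phi|\lesssim|\na\phi|$ used in the right direction, i.e.\ only to bound the numerator). Your remark about recovering the endpoint $q=r$ by first passing to some $q<r$ (automatic on a compact manifold) is a sensible way to reconcile the statement with the range $1<q<r$ actually treated in the body of the proof of Theorem~B.
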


\section{Proof of the Gaffney's inequality}\label{sec: gaffney}

The goal of this section is to prove Theorem~A (reproduced below).

\begin{theorem*}
Let $(\M,g_0)$ be an $n$-dimensional compact Riemannian manifold-with-boundary, let $k \in \{0,1,2,\ldots,n\}$, and let $r \in ]1,\infty[$ be arbitrary. Consider $u \in \widehat{W^{1,r}_D}\Omega^k(\M)$  or $\widehat{W^{1,r}_N}\Omega^k(\M)$, a differential $k$-form subject to designated boundary conditions. Then
\begin{equation*}
\|u\|_{W^{1,r}(\M)} \leq C\left\{\|du\|_{L^r(\M)} + \|\delta u\|_{L^r(\M)} + \|u\|_{L^r(\M)}\right\},
\end{equation*}
where $C$ depends only on $r$, $k$, $n$, and the geometry of $\M$.
\end{theorem*}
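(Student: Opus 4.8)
The plan is to combine the variational characterisation from Proposition~\ref{prop: W1,r estimate} with Bochner-type identities and integration by parts, so that the proof reduces to estimating the bilinear pairing
\[
\ll u,\phi\gg_{g_0} := (du,d\phi)_{g_0} + (\delta u,\delta\phi)_{g_0} + (u,\phi)_{g_0}
\]
from above by $\bigl(\|du\|_{L^r} + \|\delta u\|_{L^r} + \|u\|_{L^r}\bigr)$ times $\bigl(\|\na\phi\|_{L^{r'}} + \|\phi\|_{L^{r'}}\bigr)$, uniformly over test forms $\phi \in C^\infty_\diamondsuit\Omega^k(\M)$ satisfying the \emph{same} boundary condition as $u$. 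First I would integrate by parts in $(du,d\phi)_{g_0}$ and $(\delta u,\delta\phi)_{g_0}$ using the Gauss--Green formula of the second preliminary lemma (applied in both directions), moving all derivatives off $u$. This produces a bulk term $(\Delta u,\phi)_{g_0}$, where $\Delta = d\delta + \delta d$ is the Hodge--de~Rham Laplacian, plus boundary integrals of the form $\int_{\p\M} \ttt(\delta u)\wedge\star\nnn\phi$ and $\int_{\p\M}\ttt\phi\wedge\star\nnn(du)$. The crucial point here is that, because $u$ and $\phi$ both satisfy the Dirichlet (resp.\ Neumann) condition, Lemma~\ref{lem: n and t} forces the relevant tangential/normal parts to vanish --- e.g.\ in the Neumann case $\nnn\phi = 0$ kills one boundary term and $\nnn u = 0$ together with $\nnn(du)=d(\nnn u)=0$ kills the other. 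So after this step one is left with $\ll u,\phi\gg_{g_0} = (\Delta u,\phi)_{g_0} + (u,\phi)_{g_0}$ with \emph{no} boundary contribution --- but $\Delta u$ involves two derivatives of $u$, which is one derivative too many.

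To descend to first order I would invoke the Bochner--Weitzenböck formula $\Delta u = \na^\dagger\na u + \ric(u)$, where $\ric$ denotes the Weitzenböck curvature operator (a zeroth-order, algebraic term bounded by the curvature of $g_0$). Substituting, $(\Delta u,\phi)_{g_0} = (\na^\dagger\na u,\phi)_{g_0} + (\ric(u),\phi)_{g_0} = (\na u,\na\phi)_{g_0} - \int_{\p\M}\langle \na_\nu u,\phi\rangle\,\dd\Sigma + (\ric(u),\phi)_{g_0}$, the boundary term coming from integrating $\na^\dagger$ by parts against $\phi$. Now $(\na u,\na\phi)_{g_0}$ is estimated trivially by $\|\na u\|_{L^r}\|\na\phi\|_{L^{r'}}$, and $(\ric(u),\phi)_{g_0} \lesssim \|u\|_{L^r}\|\phi\|_{L^{r'}}$. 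The remaining obstacle --- and I expect this to be \emph{the} main difficulty --- is the boundary term $\int_{\p\M}\langle\na_\nu u,\phi\rangle\,\dd\Sigma$, which involves the full normal derivative of $u$ on $\p\M$ and is not controlled by $\|du\|_{L^r} + \|\delta u\|_{L^r} + \|u\|_{L^r}$ on its own; nor does it obviously vanish under the boundary conditions, since the Dirichlet/Neumann conditions only constrain part of the 1-jet of $u$ at $\p\M$.

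This is where the \emph{polarisation trick} attributed to Chern~\cite{c} enters. The idea is to exploit the symmetry of the boundary bilinear form in $u$ and $\phi$: I would derive the analogous identity with the roles of $u$ and $\phi$ interchanged (legitimate since $\phi$ is smooth and also lies in $C^\infty_\diamondsuit$), obtaining a second expression for $\ll u,\phi\gg_{g_0}$ with the boundary term $\int_{\p\M}\langle\na_\nu\phi,u\rangle\,\dd\Sigma$. The difference of the two normal-derivative boundary integrals, $\int_{\p\M}\bigl(\langle\na_\nu u,\phi\rangle - \langle u,\na_\nu\phi\rangle\bigr)\,\dd\Sigma$, can be rewritten purely in terms of the \emph{tangential} derivatives along $\p\M$ and the second fundamental form of $\p\M$ in $\M$ (a bounded algebraic quantity), once one uses that the normal components of $u$ and $\phi$ --- or their tangential components --- vanish on $\p\M$. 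Concretely, splitting $\na_\nu = $ normal part plus tangential part via the Gauss/Weingarten equations and using $\nnn u = \nnn\phi = 0$ (Neumann case), the genuinely second-order normal piece cancels and what survives is controlled by $\|u\|_{W^{1,r}(\M)}$ restricted to the boundary, hence by interpolation/trace inequalities by $\varepsilon\|\na u\|_{L^r(\M)} + C_\varepsilon\|u\|_{L^r(\M)}$. The $\varepsilon\|\na u\|_{L^r}$ term is then absorbed into the left-hand side after feeding everything back through Proposition~\ref{prop: W1,r estimate} and taking the supremum over $\phi$. One runs the entirely parallel computation for the Dirichlet case, using $\ttt u = \ttt\phi = 0$ and the commutation $\ttt(du) = d(\ttt u) = 0$, together with the Hodge-star duality $\star W^{1,r}_D\Omega^k = W^{1,r}_N\Omega^{n-k}$ from Lemma~\ref{lem: n and t} which lets one transfer the Neumann computation directly. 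Assembling the bounds on all three pieces $(\na u,\na\phi)_{g_0}$, $(\ric(u),\phi)_{g_0}$, and the polarised boundary term, dividing by $\|\na\phi\|_{L^{r'}} + \|\phi\|_{L^{r'}}$, and taking the supremum yields exactly the right-hand side of Proposition~\ref{prop: W1,r estimate}, completing the proof of Theorem~A. The delicate bookkeeping is entirely in the boundary-term analysis; everything else is routine.
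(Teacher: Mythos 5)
Your overall architecture --- reduction to the variational bound of Corollary~\ref{cor: main}, integration by parts, the Bochner--Weitzenb\"ock formula, and a polarisation in $u$ and $\phi$ to neutralise the normal-derivative boundary terms --- is the same as the paper's. But there is a genuine gap at the very first integration by parts. You claim that after moving $d$ and $\delta$ onto $u$ \emph{both} boundary integrals vanish, justifying the second one in the Neumann case by ``$\nnn(du)=d(\nnn u)=0$''. That commutation relation is false: Lemma~\ref{lem: n and t} gives only $\ttt(d\omega)=d(\ttt\omega)$ and $\nnn(\delta\omega)=\delta(\nnn\omega)$; the operators $\nnn$ and $d$ (equivalently, $\ttt$ and $\delta$) do not commute. (For a tangential vectorfield on a $3$D domain, the tangential part of its curl need not vanish on the boundary.) Consequently exactly one boundary term survives in each case --- $\int_{\p\M}\ttt\phi\wedge\star\nnn(du)$ for Neumann, $\int_{\p\M}\ttt(\delta u)\wedge\star\nnn\phi$ for Dirichlet --- and it carries the trace of a first-order derivative of $u$ on $\p\M$, which is controlled neither by $\|du\|_{L^r(\M)}+\|\delta u\|_{L^r(\M)}+\|u\|_{L^r(\M)}$ nor by trace inequalities. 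This term cannot be discarded: in the paper's computation it is the term $J$, which is split into a zeroth-order piece (via the boundary codifferential $\slashed{\delta}$ and the boundary condition) plus $\int_{\p\M}\langle\nu\mres\na_\nu\phi,\nu\mres u\rangle_{\bigwedge^{k-1}}\,\dd\Sigma$, and it is precisely this last piece that cancels, after polarisation, the normal-derivative boundary contribution produced by integrating $\na^\dagger\na$ by parts. Without it the polarised identity retains an uncancelled first-order boundary integral and the estimate does not close.

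A second, smaller issue concerns the mechanics of the polarisation: one must \emph{add} the identity to its $u\leftrightarrow\phi$ swap, not take the difference. Since the Weitzenb\"ock curvature operator is self-adjoint, the difference of the two expressions for $\ll u,\phi \gg_{g_0}$ merely records that the boundary bilinear form is symmetric; it replaces $\int_{\p\M}\langle\na_\nu u,\phi\rangle\,\dd\Sigma$ by $\int_{\p\M}\langle u,\na_\nu\phi\rangle\,\dd\Sigma$ plus controllable terms, and the latter is just as uncontrollable (it involves $\na\phi$ on $\p\M$, not in $\M$). The sum, by metric compatibility, turns $\langle\na_\nu u,\phi\rangle+\langle u,\na_\nu\phi\rangle$ into $\na_\nu\langle u,\phi\rangle$, and the resulting integral $\int_{\p\M}\na_\nu\langle u,\phi\rangle\,\dd\Sigma$ must then be cancelled \emph{exactly} against the interior term $-\int_\M\na_\nu\na_\nu\langle u,\phi\rangle\,\dvg$ left over from the connection Laplacian: it is not absorbable via an $\varepsilon\|\na u\|_{L^r}+C_\varepsilon\|u\|_{L^r}$ trace estimate, because rewriting it as an interior integral produces terms of the form $\int_\M\langle\na_\nu u,\phi\rangle\,\dvg$ with a coefficient of order one in front of $\|\na u\|_{L^r}$.
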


An immediate consequence is as follows:

\begin{corollary}\label{cor: Cr = C-hat r}
Let $(\M,g_0)$ be an $n$-dimensional compact Riemannian manifold-with-boundary, let $k \in \{0,1,2,\ldots,n\}$, and let $r \in ]1,\infty[$ be arbitrary. Then $$\widehat{W^{1,r}_\diamondsuit}\Omega^k(\M)=W^{1,r}\Omega^k(\M)$$ for either $\diamondsuit=D$ or $N$.
\end{corollary}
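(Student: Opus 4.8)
The plan is to derive the identity — read, as it must be, with the boundary-value space $W^{1,r}_\diamondsuit\Omega^k(\M)$ on the right-hand side — from Theorem~A together with the approximability of such forms by smooth ones; only the inclusion $W^{1,r}_\diamondsuit\Omega^k(\M)\subseteq\widehat{W^{1,r}_\diamondsuit}\Omega^k(\M)$ carries content.

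\emph{The inclusion $\widehat{W^{1,r}_\diamondsuit}\Omega^k(\M)\subseteq W^{1,r}_\diamondsuit\Omega^k(\M)$.} For $u\in W^{1,r}\Omega^k(\M)$ one automatically has $du\in L^r\Omega^{k+1}(\M)$ and $\delta u\in L^r\Omega^{k-1}(\M)$, both being pointwise algebraic combinations of $\na u$. Moreover, a $W^{1,r}$-form has a trace in $W^{1-\frac1r,r}\Omega^k(\p\M)$ which, by approximation over $C^1\Omega^k(\clM)$ through the integration-by-parts formula of \S\ref{sec: prelim}, agrees with the distributional trace entering the definition of $W^{1,r}_\diamondsuit\Omega^k(\M)$; since $\nnn\omega=\nu^\flat\wedge(\nu\mres\omega)$ on $\p\M$, the pointwise condition $\nu\mres u|_{\p\M}=0$ is equivalent to $\nnn u=0$ and, dually, $u\wedge\nu^\flat|_{\p\M}=0$ to $\ttt u=0$. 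Hence $\widehat{W^{1,r}_\diamondsuit}\Omega^k(\M)\subseteq W^{1,r}_\diamondsuit\Omega^k(\M)$.

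\emph{The inclusion $W^{1,r}_\diamondsuit\Omega^k(\M)\subseteq\widehat{W^{1,r}_\diamondsuit}\Omega^k(\M)$.} Let $u\in W^{1,r}_\diamondsuit\Omega^k(\M)$ and pick $u_j\in C^\infty_\diamondsuit\Omega^k(\M)$ with $u_j\to u$ in the graph norm $\|\cdot\|_{L^r(\M)}+\|d\cdot\|_{L^r(\M)}+\|\delta\cdot\|_{L^r(\M)}$; this is possible because $C^\infty_\diamondsuit\Omega^k(\M)$ is dense in $W^{1,r}_\diamondsuit\Omega^k(\M)$ for that norm, which one establishes by reflecting $u$ across a flattened piece of $\p\M$ (the parity dictated by the boundary condition, exactly as in \S\ref{subsec a2}), mollifying, and restoring the chart by a partition of unity — an argument using only that $d$ and $\delta$ commute with mollification up to a vanishing commutator, hence not appealing to Theorem~A. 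Each $u_j$ lies in $\widehat{W^{1,r}_\diamondsuit}\Omega^k(\M)$, so Theorem~A applied to the differences gives
\[
\|u_j-u_\ell\|_{W^{1,r}(\M)}\le C\left\{\|d(u_j-u_\ell)\|_{L^r(\M)}+\|\delta(u_j-u_\ell)\|_{L^r(\M)}+\|u_j-u_\ell\|_{L^r(\M)}\right\}\xrightarrow[j,\ell\to\infty]{}0.
\]
Thus $(u_j)$ is Cauchy in $W^{1,r}\Omega^k(\M)$, with some limit $v$; since $u_j\to u$ in $L^r\Omega^k(\M)$ as well, $v=u$, so $u\in W^{1,r}\Omega^k(\M)$. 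Finally, $W^{1,r}$-convergence forces convergence of boundary traces in $W^{1-\frac1r,r}\Omega^k(\p\M)$, and since $\nu\mres u_j|_{\p\M}=0$ (resp.\ $u_j\wedge\nu^\flat|_{\p\M}=0$) for every $j$, the same holds for $u$; therefore $u\in\widehat{W^{1,r}_\diamondsuit}\Omega^k(\M)$.

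\emph{On the crux.} The only ingredient here that is not purely formal is the graph-norm density of $C^\infty_\diamondsuit\Omega^k(\M)$ in $W^{1,r}_\diamondsuit\Omega^k(\M)$, and the single point to watch is that its proof must not covertly use the boundary Gaffney inequality; the reflection of \S\ref{subsec a2} is tailored precisely so that the boundary condition makes both $d$ and $\delta$ of the reflected form lie in $L^r$, after which a plain mollification suffices, so there is no circularity. If one prefers to bypass density, one may instead observe that, by Hölder's inequality, for $u\in W^{1,r}_\diamondsuit\Omega^k(\M)$ the supremum appearing in Theorem~B is at most $\|du\|_{L^r(\M)}+\|\delta u\|_{L^r(\M)}+\|u\|_{L^r(\M)}<\infty$, so Theorem~B places $u$ in $\widehat{W^{1,r}_\diamondsuit}\Omega^k(\M)$ as soon as $u\in\widehat{W^{1,q}_\diamondsuit}\Omega^k(\M)$ for \emph{some} $q\in\one$; that first scrap of Sobolev regularity up to $\p\M$ comes, for $r\ge2$, from $L^r\subset L^2$ on the compact $\M$ and the classical Hilbert-space Gaffney inequality, and for $1<r<2$ from the same reflection-and-interior-regularity argument. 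In every version, the substantive content lies in manufacturing that first bit of boundary regularity from the bare hypotheses.
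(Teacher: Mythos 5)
Your proof is correct and follows essentially the same route as the paper's: both reduce the one substantive inclusion to the graph-norm density of $C^\infty_\diamondsuit\Omega^k(\M)$ in the boundary-value space $W^{1,r}_\diamondsuit\Omega^k(\M)$ (you sketch this by reflection and mollification; the paper reduces to $k=0$ and cites Duvaut--Lions) and then apply Theorem~A to an approximating sequence and pass to the limit. You also correctly read the right-hand side of the statement as the boundary-value space $W^{1,r}_\diamondsuit\Omega^k(\M)$ rather than the literal $W^{1,r}\Omega^k(\M)$, which is indeed what the paper's own proof establishes.
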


\begin{proof}[Proof of Corollary~\ref{cor: Cr = C-hat r}]
$\widehat{W^{1,r}_\diamondsuit}\Omega^k(\M)\subset W^{1,r}\Omega^k(\M)$ is trivial. On the other hand, knowing that $C^\infty_{\diamondsuit}\Omega^k(\M)$ is dense in $\CC^r_k(\M)$, the other inclusion would follow immediately from the Gaffney's inequality (Theorem~A) and a density argument.

To show the density result, by considering standard bases of simple $k$-forms as in the proof of Lemma~\ref{lem: whole space}, it suffices to take $k=0$. By partition of unity we can reduce to the case where $\M$ is covered by a single co-ordinate chart. We may also take this chart to be Euclidean via the co-ordinate map. Then it follows from classical results as in  Duvaut--Lions \cite{dl}, Chapter~7, Lemmata
4.2 and 6.1. (See also Corollaries~3.2 and 3.3 in Iwaniec--Scott--Stroffolini  \cite{iss}.)  \end{proof}


\begin{proof}[Proof of Theorem~A]

First, observe that we only need to prove for the case $\diamondsuit=D$, namely that $\ttt u = 0 = \ttt \phi$. Indeed, if $u,\phi \in C^\infty_N\Omega^k(\M)$, then their Hodge duals $\star u$ and $\star \phi \in C^\infty_D\Omega^k(\M)$; so Eq.~\eqref{gaffney} for $\star u$ and $\star \phi$ implies the same result (with the same constants) for $u$ and $\phi$, as the Hodge star operator is an $L^r$-isometry for any $r \in \one$. 

Now we assume $\ttt u = 0 = \ttt \phi$. Integration by parts and the Stokes' theorem give us
\begin{align*}
\mathscr{D}(u,\phi) &:= \int_\M \langle du, d\phi\rangle_{\bigwedge^{k+1}} \,\dvg + \int_\M \langle \delta u, \delta \phi\rangle_{\bigwedge^{k-1}} \,\dvg \\
&=\int_\M \langle \Delta\phi, u\rangle_{\bigwedge^k}\,\dvg + \int_{\p\M} \ttt u \wedge \star \nnn(d\phi) - \int_{\p\M} \ttt(\delta \phi) \wedge \star \nnn u.
\end{align*}
See p.62, Corollary~2.1.4 in \cite{s} for details. Recall that the pairing $\langle\bullet,\bullet\rangle_{\bigwedge^p}$ denotes the pointwise inner product for differential $p$-forms on $\M$ induced by the metric $g_0$. As $\ttt u=0$, we have
\begin{align*}
\int_{\p\M} \ttt u \wedge \star \nnn(d\phi)=0.
\end{align*}

Consider now $$I:=\int_\M \langle \Delta\phi, u\rangle_{\bigwedge^k}\,\dvg.$$ The Bochner--Weitzenb\"{o}ck formula yields that
\begin{align*}
I = I_1+I_2:= \int_\M \langle \ric \phi,u\rangle_{\bigwedge^k}\,\dvg + \int_\M \left\langle \widetilde{\Delta}\phi,u\right\rangle_{\bigwedge^k}\,\dvg,
\end{align*}
where $\ric \in {\rm End}\left(\bigwedge^k T^*\M\right)$ depends only on the Riemann curvature tensor of $(\M,g_0)$, and $\widetilde{\Delta}$ is the connection Laplacian. Fix any local orthonormal frame $\{e_i\}_{i=1}^n$. Then one may express (with the summation convention and the abbreviation $\na_i \equiv \na_{e_i}$) that
\begin{equation*}
\widetilde{\Delta}\phi = - \na_i\na_i \phi + \na_{\na_i e_i}\phi.
\end{equation*}
Thus $I_2 = I_{21}+I_{22}+I_{23}+I_{24}$, where
\begin{eqnarray*}
&&I_{21} = -\int_\M \na_i\left(\langle \na_i\phi,u\rangle_{\bigwedge^k}\right)\,\dvg,\\
&&I_{22} = \int_\M \langle \na_i\phi, \na_i u\rangle_{\bigwedge^k}\,\dvg,\\
&&I_{23} = \int_\M\na_{\na_i e_i} \left(\langle \phi,u \rangle_{\bigwedge^k}\right) \,\dvg,\\
&&I_{24} = -\int_\M \left\langle \phi,\na_{\na_i e_i}u\right\rangle_{\bigwedge^k}\,\dvg.
\end{eqnarray*}

Let us treat the four terms $I_{2j}$ one by one. First, 
\begin{align*}
I_{21} = - \sum_{\beta\,\text{ tangential}}\left(\int_\M \na_\beta \langle\na_\beta\phi,u\rangle_{\bigwedge^k}\,\dvg\right) - \int_\M \na_\nu\left(\langle\na_\nu\phi,u\rangle_{\bigwedge^k}\right)\,\dvg.
\end{align*}
The tangential term  is zero, since by the Stokes' or the Gauss--Green theorem one has
\begin{align*}
\int_\M \na_\beta \langle\na_\beta\phi,u\rangle_{\bigwedge^k}\,\dvg = \int_{\p\M} \langle\na_\beta\phi,u\rangle_{\bigwedge^k} \langle \nu, e_\beta\rangle_{\bigwedge^1}\,\dvg=0.
\end{align*}
Similarly,
\begin{align*}
I_{23} &= \int_{\p\M} \langle\phi,u\rangle_{\bigwedge^k} \langle\na_i e_i,\nu\rangle_{\bigwedge^1}\,\dvg\\ &= \int_{\p\M} \G^\nu_{ii} \langle\phi,u\rangle_{\bigwedge^k}\,\dvg,
\end{align*}
where we adopt the notation $\G^{\nu}_{ii}=\G^\ell_{ii}\langle\nu,e_\ell\rangle$; \emph{i.e.}, we shall intentionally confuse $\nu$ with $\nu^\flat$ (and the $1$-form canonically dual to it). Moreover, the definition of the covariant derivative gives us
\begin{align*}
I_{22} = \int_\M \langle\na\phi,\na u\rangle_{\bigwedge^{k+1}}\,\dvg.
\end{align*}
In summary, we have obtained
\begin{align*}
I_2 &= \int_\M \langle\na\phi,\na u\rangle_{\bigwedge^{k+1}}\,\dvg - \int_\M \na_\nu\left(\langle\na_\nu\phi,u\rangle_{\bigwedge^k}\right)\,\dvg \nonumber\\
&\quad+  \int_{\p\M} \G^\nu_{ii} \langle\phi,u\rangle_{\bigwedge^k}\,\dvg  -\int_\M \left\langle \phi,\na_{\na_i e_i}u\right\rangle_{\bigwedge^k}\,\dvg.
\end{align*}

The computation for the boundary term 
\begin{align*}
J:= - \int_{\p\M} \ttt(\delta \phi) \wedge \star \nnn u
\end{align*}
can be performed as in p.64, \cite{s}. Indeed, following the arguments therein, one may infer that
\begin{align*}
J &= -\int_{\p\M} \langle\delta\phi,\nu\mres u\rangle_{\bigwedge^{k-1}}\,\dd\Sigma \\
&= -\int_{\p\M}\left\langle \slashed{\delta}\phi,\nu\mres u\right\rangle_{\bigwedge^{k-1}}\,\dd\Sigma + \int_{\p\M}\left\langle\nu\mres\na_\nu\phi,\nu\mres u\right\rangle_{\bigwedge^{k-1}}\,\dd\Sigma \\
&=:J_1+J_2.
\end{align*}
Here $\slashed{\delta}$ is the codifferential \emph{on the boundary $\p\M$}. 

Notice that $J_1$ is a good term, in the sense that it can be expressed in a way such that no derivatives of $\phi$ or $u$ are involved. To see this, take any shuffle $\sigma$ of tangential indices; \emph{i.e.}, $\sigma$ is in the symmetry group on $\{1,2,\ldots,n\}$, respects the increasing order of indices, and leaves the normal index $\nu$ invariant. Thanks to the Leibniz rule for covariant derivatives, we get
\begin{align*}
&\left[\slashed{\delta}\phi\right]\left(e_{\sigma(1)},\ldots,e_{\sigma(k-1)}\right)\nonumber\\
&= -\sum_{\beta \neq \nu} \na_\beta\phi\left(e_j,e_{\sigma(1)},\ldots,e_{\sigma(k-1)}\right)\nonumber\\
& = -\sum_{\beta \neq \nu}\na_\beta\left\{\phi\left(e_\beta,e_{\sigma(1)},\ldots,e_{\sigma(k-1)}\right)\right\} + \sum_{\beta \neq \nu}\phi\left(\na_\beta e_\beta,e_{\sigma(1)},\ldots,e_{\sigma(k-1)}\right)\nonumber\\
&\qquad+ \sum_{\beta \neq \nu}\phi\left(e_\beta,e_{\sigma(1)},\ldots,\na_\beta e_{\sigma(k)},\ldots,e_{\sigma(k-1)}\right)
\end{align*}
As $\phi$ is purely perpendicular at the boundary, so is $\na_\beta\phi$ for any tangential index $\beta \neq \nu$. Thus, the first term on the right-hand side of the final equality vanishes constantly on $\p\M$. The other two terms contain no derivative of $\phi$ --- therein, the covariant derivatives fall on $e_\gamma$ (for tangential $\gamma$), hence can be expressed in terms of the Christoffel symbols.

Putting the above arguments together, we have deduced from $\mathscr{D}(u,\phi)=I+J$ the following expression:
\begin{align}\label{polarise 1}
&\int_\M \langle du, d\phi\rangle_{\bigwedge^{k+1}} \,\dvg + \int_\M \langle \delta u, \delta \phi\rangle_{\bigwedge^{k-1}} \,\dvg  -  \int_\M \langle\na\phi,\na u\rangle_{\bigwedge^{k+1}}\,\dvg\nonumber\\
&\quad = - \int_\M \na_\nu\left(\langle\na_\nu\phi,u\rangle_{\bigwedge^k}\right)\,\dvg +  \int_{\p\M} \G^\nu_{ii} \langle\phi,u\rangle_{\bigwedge^k}\,\dvg  -\int_\M \left\langle \phi,\na_{\na_i e_i}u\right\rangle_{\bigwedge^k}\,\dvg\nonumber\\
&\qquad+\int_{\p\M}\left\langle\nu\mres\na_\nu\phi,\nu\mres u\right\rangle_{\bigwedge^{k-1}}\,\dd\Sigma - \int_{\p\M} \left\langle[{\rm GOOD}\{\phi\}],\nu\mres u\right\rangle_{\bigwedge^{k-1}}\,\dd\Sigma,
\end{align}
where $[{\rm GOOD}\{\phi\}]$ is the good term arising from $J_1$ as above. (The punchline is that the last term contains no derivative of $u$ on the boundary.)

Now, observe that the left-hand side of Eq.~\eqref{polarise 1} is symmetric in $u$ and $\phi$. By interchanging $u$ and $\phi$, we get
\begin{align}\label{polarise 2}
&\int_\M \langle du, d\phi\rangle_{\bigwedge^{k+1}} \,\dvg + \int_\M \langle \delta u, \delta \phi\rangle_{\bigwedge^{k-1}} \,\dvg  -  \int_\M \langle\na\phi,\na u\rangle_{\bigwedge^{k+1}}\,\dvg\nonumber\\
&\quad = - \int_\M \na_\nu\left(\langle\na_\nu u,\phi\rangle_{\bigwedge^k}\right)\,\dvg +  \int_{\p\M} \G^\nu_{ii} \langle\phi,u\rangle_{\bigwedge^k}\,\dvg  -\int_\M \left\langle u,\na_{\na_i e_i}\phi\right\rangle_{\bigwedge^k}\,\dvg\nonumber\\
&\qquad+\int_{\p\M}\left\langle\nu\mres\na_\nu u,\nu\mres \phi\right\rangle_{\bigwedge^{k-1}}\,\dd\Sigma - \int_{\p\M} \left\langle[{\rm GOOD}\{u\}],\nu\mres \phi\right\rangle_{\bigwedge^{k-1}}\,\dd\Sigma.
\end{align}
Adding up Eqs.~\eqref{polarise 1} and \eqref{polarise 2}, we can take advantage of the metric-compatibility of $\langle\bullet,\bullet\rangle_{\bigwedge^p}$ to deduce that
\begin{align}\label{polarise 3}
&\int_\M \langle du, d\phi\rangle_{\bigwedge^{k+1}} \,\dvg + \int_\M \langle \delta u, \delta \phi\rangle_{\bigwedge^{k-1}} \,\dvg  -  \int_\M \langle\na\phi,\na u\rangle_{\bigwedge^{k+1}}\,\dvg\nonumber\\
&\quad = - \int_\M \na_\nu \na_\nu \langle u,\phi\rangle_{\bigwedge^k}\,\dvg  +  \int_{\p\M} \G^\nu_{ii} \langle\phi,u\rangle_{\bigwedge^k}\,\dvg - \int_\M \na_{\na_i e_i}\langle u,\phi\rangle_{\bigwedge^k}\,\dvg\nonumber\\
&\qquad -\int_{\p\M} \left\langle[{\rm GOOD}\{\phi\}],\nu\mres u\right\rangle_{\bigwedge^{k-1}}\,\dd\Sigma -\int_{\p\M} \left\langle[{\rm GOOD}\{u\}],\nu\mres \phi\right\rangle_{\bigwedge^{k-1}}\,\dd\Sigma\nonumber\\
&\qquad +\int_{\p\M}\left\langle\nu\mres\na_\nu u,\nu\mres \phi\right\rangle_{\bigwedge^{k-1}}\,\dd\Sigma+\int_{\p\M}\left\langle\nu\mres\na_\nu \phi,\nu\mres u\right\rangle_{\bigwedge^{k-1}}\,\dd\Sigma.
\end{align}
The fourth and the fifth terms on the right-hand side of Eq.~\eqref{polarise 3} contain no derivatives. Also, using the same computation for $I_{23}$ as in the above, we find that the second and the third terms cancel each other. 

Finally, let us deal with the last two terms on the right-hand side of Eq.~\eqref{polarise 3}. We proceed by computations in local co-ordinates. Indeed, since $\ttt u =0$, on the boundary we have
\begin{align*}
u = \hat{\sum} u_{\alpha_1\cdots \alpha_{k-1}\nu}dx^{\alpha_1} \wedge \cdots \wedge dx^{\alpha_{k-1}}\wedge \nu,
\end{align*}
where the summation $\hat{\sum}$ is taken over all of $(\alpha_1, \ldots, \alpha_{k-1})$, the increasing $(k-1)$-tuples of tangential indices (\emph{i.e.}, $\alpha_j \neq \nu$). Once again we adopt the abuse of notation $\nu \equiv \nu^\flat$. Then 
\begin{align*}
\na_\nu u &= \hat{\sum} \Bigg(\frac{\p  u_{\alpha_1\cdots \alpha_{k-1}\nu}}{\p\nu}dx^{\alpha_1} \wedge \cdots \wedge dx^{\alpha_{k-1}}\wedge \nu \nonumber\\
&\quad+ \sum_{j=1}^{k-1}u_{\alpha_1\cdots \alpha_{k-1}\nu} dx^{\alpha_1}\wedge\cdots\wedge \na_{\nu}dx^{\alpha_j}\wedge \cdots \wedge dx^{\alpha_{k-1}} \wedge \nu\nonumber\\
&\qquad+ u_{\alpha_1\cdots \alpha_{k-1}\nu}dx^{\alpha_1} \wedge \cdots \wedge dx^{\alpha_{k-1}}\wedge \na_\nu\nu  \Bigg).
\end{align*}
Again, let us write
\begin{align*}
\na_\nu u &= \hat{\sum} \e\frac{\p  u_{\alpha_1\cdots \alpha_{k-1}\nu}}{\p\nu}dx^{\alpha_1} \wedge \cdots \wedge dx^{\alpha_{k-1}}\wedge \nu + \left[{\rm Good}'\{u\}\right]\qquad \text{ on } \p\M,
\end{align*}
where $\left[{\rm Good}'\{u\}\right]$ contains the Christoffel symbols of $(\M,g_0)$ and no derivatives of $u$, and $\e \in \{\pm 1\}$ is a sign. Similarly, by expressing $\phi$ in the same local co-ordinates:
\begin{align*}
\phi = \hat{\sum} \phi_{\alpha_1\cdots \alpha_{k-1}\nu}dx^{\alpha_1} \wedge \cdots \wedge dx^{\alpha_{k-1}}\wedge \nu,
\end{align*}
we can easily compute its contraction with $\nu$:
\begin{align*}
\nu \mres \phi = \hat{\sum} \e \phi_{\alpha_1\cdots \alpha_{k-1}\nu}dx^{\alpha_1} \wedge \cdots \wedge dx^{\alpha_{k-1}}.
\end{align*}
Here $\e$ is the same sign as before. So the penultimate term in Eq.~\eqref{polarise 3} becomes
\begin{align*}
\int_{\p\M}\left\langle\nu\mres\na_\nu u,\nu\mres \phi\right\rangle_{\bigwedge^{k-1}}\,\dd\Sigma &=  \int_{\p\M}\left\{\hat{\sum} \left(\frac{\p  u_{\alpha_1\cdots \alpha_{k-1}\nu}}{\p\nu}\right)\phi_{\alpha_1\cdots \alpha_{k-1}\nu}\right\}\,\dd\Sigma\\
&\quad + \int_{\p\M} \left\langle\left[{\rm GOOD}'\{u\}\right],\nu\mres \phi\right\rangle_{\bigwedge^{k-1}}\,\dd\Sigma.
\end{align*}
By symmetry, the final term in Eq.~\eqref{polarise 3} satisfies
\begin{align*}
\int_{\p\M}\left\langle\nu\mres\na_\nu \phi,\nu\mres u\right\rangle_{\bigwedge^{k-1}}\,\dd\Sigma &= \int_{\p\M}\left\{ \hat{\sum} \left(\frac{\p  \phi_{\alpha_1\cdots \alpha_{k-1}\nu}}{\p\nu}\right)u_{\alpha_1\cdots \alpha_{k-1}\nu}\right\}\,\dd\Sigma\\
&\quad + \int_{\p\M} \left\langle\left[{\rm GOOD}'\{\phi\}\right],\nu\mres u\right\rangle_{\bigwedge^{k-1}}\,\dd\Sigma.
\end{align*}
Adding up together the above two equalities, we get 
\begin{align}\label{polarise, 4}
&\int_{\p\M}\left\langle\nu\mres\na_\nu u,\nu\mres \phi\right\rangle_{\bigwedge^{k-1}}\,\dd\Sigma+\int_{\p\M}\left\langle\nu\mres\na_\nu \phi,\nu\mres u\right\rangle_{\bigwedge^{k-1}}\,\dd\Sigma\nonumber\\
&\quad = \int_{\p\M} \na_\nu \langle u,\phi\rangle_{\bigwedge^k}\,\dd\Sigma + \int_{\p\M} \left\langle\left[{\rm GOOD}'\{u\}\right],\nu\mres \phi\right\rangle_{\bigwedge^{k-1}}\,\dd\Sigma \nonumber\\
&\qquad +\int_{\p\M} \left\langle\left[{\rm GOOD}'\{\phi\}\right],\nu\mres u\right\rangle_{\bigwedge^{k-1}}\,\dd\Sigma.
\end{align}
One more application of the Stokes'/Gauss--Green theorem leads to
\begin{align*}
\int_{\p\M} \na_\nu \langle u,\phi\rangle_{\bigwedge^k}\,\dd\Sigma = \int_\M \na_\nu\na_\nu\langle u,\phi\rangle_{\bigwedge^k}\,\dvg,
\end{align*}
which cancels the first term on the right-hand side of Eq.~\eqref{polarise 3}.

Therefore, summarising Eqs.~\eqref{polarise 3} and \eqref{polarise, 4}, we arrive at \begin{align}\label{polarise 5}
&\int_\M \langle du, d\phi\rangle_{\bigwedge^{k+1}} \,\dvg + \int_\M \langle \delta u, \delta \phi\rangle_{\bigwedge^{k-1}} \,\dvg  -  \int_\M \langle\na\phi,\na u\rangle_{\bigwedge^{k+1}}\,\dvg\nonumber\\
&\quad = \int_{\p\M} \left\langle\left[{\rm GOOD}''\{u\}\right],\nu\mres \phi\right\rangle_{\bigwedge^{k-1}}\,\dd\Sigma +\int_{\p\M} \left\langle\left[{\rm GOOD}''\{\phi\}\right],\nu\mres u\right\rangle_{\bigwedge^{k-1}}\,\dd\Sigma \nonumber\\
&\quad =:\mathscr{G}\{u,\phi\},
\end{align}
where $[{\rm GOOD}''\{\bullet\}]$ involves no derivative of the variable $\bullet$ and depends  only on the $C^1$-geometry of $(\M,g_0)$. Thus, in light of the H\"{o}lder and trace inequalities, the term $\mathscr{G}\{u,\phi\}$ --- which is clearly symmetric in $u$ and $\phi$ --- can be estimated by 
\begin{align}\label{gaffney, final estimate}
|\mathscr{G}\{u,\phi\}|&\leq C\|u\|_{L^r(\p\M)} \|\phi\|_{L^{r'}(\p\M)}\nonumber\\
&\leq C \left(\delta \|\na u\|_{L^r(\M)} + C_\delta \|u\|_{L^r(\M)} \right)\left(\|\na \phi\|_{L^{r'}(\M)} + \|\phi\|_{L^{r'}(\M)} \right),
\end{align}
where $C$ depends only on the $C^1$-geometry of $\M$, $\delta$ is an arbitrary small number, and $C_\delta$ is a large number depending only on $\delta$.

Finally, by choosing $\delta>0$ to be sufficiently small and varying $\phi$ through all the test $k$-forms under the designated boundary condition, we estimate from Eqs.~\eqref{gaffney, final estimate} and \eqref{polarise 5} that
\begin{align*}
&\left|\int_\M \langle\na\phi,\na u\rangle_{\bigwedge^{k+1}}\,\dvg\right| \leq C\left\{ \left(\|d u\|_{L^r(\M)} + \|\delta u\|_{L^r(\M)}+ \|u\|_{L^r(\M)}\right)\|\phi\|_{W^{1,r'}(\M)}\right\},
\end{align*}
noting that $\|d\phi\|_{L^{r'}(\M)}+\|\delta\phi\|_{L^{r'}(\M)}\lesssim \|\phi\|_{W^{1,r'}(\M)}$. Thus the assertion immediately follows from Corollary~\ref{cor: main}.   \end{proof}


\begin{remark}
In view of Theorem~A, we can  interchangeably use the variational quantity 
$$\sup\left\{\frac{\left| ( d u, d \phi)_{g_0} +( \delta u, \delta \phi)_{g_0} + (u,\phi)_{g_0} \right|}{\|d\phi\|_{L^{r'}(\M,g_0)} + \|\delta\phi\|_{L^{r'}(\M,g_0)}+\|\phi\|_{L^{r'}(\M,g_0)}}:\,\phi \in C^\infty_{\diamondsuit}\Omega^k(\M) \right\}$$ in Theorem~B with either of the following three:
\begin{eqnarray*}
&&\sup\left\{\frac{\left| (\na u, \na \phi)_{g_0} + (u,\phi)_{g_0} \right|}{\|d\phi\|_{L^{r'}(\M,g_0)} + \|\delta\phi\|_{L^{r'}(\M,g_0)}+\|\phi\|_{L^{r'}(\M,g_0)}}:\,\phi \in C^\infty_{\diamondsuit}\Omega^k(\M) \right\},\\
&&\sup\left\{\frac{\left| (d u, d \phi)_{g_0} +( \delta u, \delta \phi)_{g_0} + (u,\phi)_{g_0} \right|}{\|\na\phi\|_{L^{r'}(\M,g_0)}+\|\phi\|_{L^{r'}(\M,g_0)}}:\,\phi \in  C^\infty_{\diamondsuit}\Omega^k(\M)\right\},\\
&&\sup\left\{\frac{\left| (\na u, \na \phi)_{g_0} + (u,\phi)_{g_0} \right|}{\|\na\phi\|_{L^{r'}(\M,g_0)}+\|\phi\|_{L^{r'}(\M,g_0)}}:\,\phi \in  C^\infty_{\diamondsuit}\Omega^k(\M) \right\}.
\end{eqnarray*}
\end{remark}

\section{Concluding Remarks}\label{sec: rem}

The Gaffney's inequality is the key estimate for the Hodge decomposition theorem for differential forms in boundary value Sobolev spaces. With Theorem~A established, we can easily deduce the Hodge decomposition from standard functional analysis. See the exposition by Schwarz \cite{s} and Iwaniec--Scott--Stroffolini \cite{iss}, as well as the classical papers of Helmholtz \cite{he}, Weyl \cite{w}, Hodge \cite{hodge}, Kodaira \cite{k}, Duff--Spencer \cite{ds}, Friedrichs \cite{f}, Morrey \cite{m, m'}, and the recent contributions by Dodziuk \cite{d}, Borchers--Sohr \cite{bs}, Simader--Sohr \cite{ss}, Wahl \cite{v}, etc. 

Moreover, one should remark that all the results in this papers remain valid for manifolds-with-boundary $(\M,g_0)$ with $C^1$-bounded geometry.

Our arguments in \S\ref{sec: gaffney} circumvent the local computations in  Euclidean space via the so-called Bochner's technique. Very roughly speaking, the Bochner's technique seem to be more suitable for the $L^2$-setting. In this note,  nevertheless, we have successfully applied it in  non-Hilbert settings. This is precisely due to the variational characterisation of the $W^{1,r}$-norm of differential forms in Theorem~B \emph{\`{a} la} Kozono--Yanagisawa \cite{ky}.

\section*{Acknowledgements}
We are deeply indebted to Gyula Csat\'{o} for kind communications and discussions on recent  developments concerning the Gaffney's inequality.

\bigskip
\bigskip

\end{document}